\newtheorem{thm}{Theorem}[section]
\newtheorem{cor}[thm]{Corollary}
\newtheorem{lem}[thm]{Lemma}
\newtheorem{prop}[thm]{Proposition}
\newtheorem{defn}[thm]{Definition}
\theoremstyle{remark}
\newtheorem{rem}[thm]{Remark}
\numberwithin{equation}{section}
\def\bell{{\boldsymbol{\large {\ell}}}} 
\def\bnu{{\boldsymbol{\large {\nu}}}}
 \def\b{{\beta}}
 \def\k{{\kappa}}
 \def\s{{\sigma}}
 \def\la{{\langle}}
 \def\ra{{\rangle}}
 \def\sB{{\mathsf B}}
 \def\sC{{\mathsf C}}
 \def\sH{{\mathsf H}}
 \def\sK{{\mathsf K}}
 \def\sM{{\mathsf M}}
 \def\sP{{\mathsf P}}
 \def\sQ{{\mathsf Q}}
 \def\cb{{\mathbf c}}
 \def\pb{{\mathbf p}}
 \def\xb{{\mathbf x}}
 \def\yb{{\mathbf y}}
 \def\CB{{\mathcal B}} 
 \def\cD{{\mathcal D}}
 \def\CI{{\mathcal I}}
 \def\CL{{\mathcal L}}
  \def\cL{{\mathcal L}}
 \def\cM{{\mathcal M}}
 \def\CV{{\mathcal V}}
 \def\NN{{\mathbb N}}
 \def\RR{{\mathbb R}}
 \def\SS{{\mathbb S}}
 \def\cS{{\mathcal S}}
 \def\ZZ{{\mathbb Z}}
\def\dim{\operatorname{dim}}
\newcommand{\wh}{\widehat}
\def\nut{\tilde \nu}
\newcommand{\pd}{\partial}
\newcommand{\fsh}{\triangle}
\newcommand{\bsh}{\nabla}
\newcommand{\ellt}{\tilde{\ell}}
\newcommand{\Nt}{\tilde{N}}
\newcommand{\fAd}{\mathfrak{A}_{d+1}}
\newcommand{\fGd}{\mathfrak{G}_{d+1}}
\begin{document}

\title[Hahn polynomials on polyhedra and quantum integrability]
{Hahn polynomials on polyhedra and quantum integrability}

\date{February 3, 2020}

\author{Plamen~Iliev}
\address{P.~Iliev, School of Mathematics, Georgia Institute of Technology, 
Atlanta, GA 30332--0160, USA}
\email{iliev@math.gatech.edu}
\thanks{The first author is partially supported by Simons Foundation Grant \#635462.}

\author{Yuan~Xu}
\address{Y.~Xu, Department of Mathematics, University of Oregon, Eugene, 
OR 97403--1222, USA}
\email{yuan@uoregon.edu}
\thanks{The second author is partially supported by NSF grant \#1510296.}

\keywords{Hahn polynomials, polyhedron,  hypergeometric distribution, difference operators, symmetries, quantum integrable systems, harmonic oscillator}
\subjclass[2010]{33C50, 33C80, 81R12}

\begin{abstract}
Orthogonal polynomials with respect to the hypergeometric distribution on lattices 
in polyhedral domains in $\RR^d$, which include hexagons in $\RR^2$ and truncated tetrahedrons in $\RR^3$, are defined 
and studied. The polynomials are given explicitly in terms of the classical one-dimensional Hahn polynomials. They are also characterized as common
eigenfunctions of a family of commuting partial difference operators. These operators provide symmetries for a system that 
can be regarded as a discrete extension of the generic quantum superintegrable system on the $d$-sphere. Moreover, the discrete 
system is proved to possess all essential properties of the continuous system. In particular, the symmetry operators for 
the discrete Hamiltonian define a representation of the Kohno-Drinfeld Lie algebra on the space of orthogonal polynomials, 
and an explicit set of $2d-1$ generators for the symmetry algebra is constructed. Furthermore, other discrete quantum superintegrable systems, which extend the quantum harmonic oscillator, are obtained by considering appropriate limits of the parameters.
\end{abstract}

\maketitle
 
\tableofcontents
  
\section{Introduction}\label{se1}

Hahn polynomials of $d$ variables are usually defined as discrete orthogonal polynomials on lattices in either a product 
domain or a simplex in $\RR^d$. We shall define and study such polynomials on polyhedral domains in $\RR^d$ that include 
hexagons in $\RR^2$ and truncated tetrahedrons in $\RR^3$ (see Figure 1). 

\begin{figure}[htb]
\labellist
\small\hair 1.5pt
 \pinlabel {$\ell_1$} [ ] at 213 31
 \pinlabel {$\ell_2$} [ ] at 28 284
 \pinlabel {$N-\ell_3$} [ ] at 20 146
 \pinlabel {$N-\ell_3$} [ ] at 143 30
 \pinlabel {$N$} [ ] at 351 34
 \pinlabel {$N$} [ ] at 28 352
\endlabellist
\centering
\includegraphics[scale=0.48]{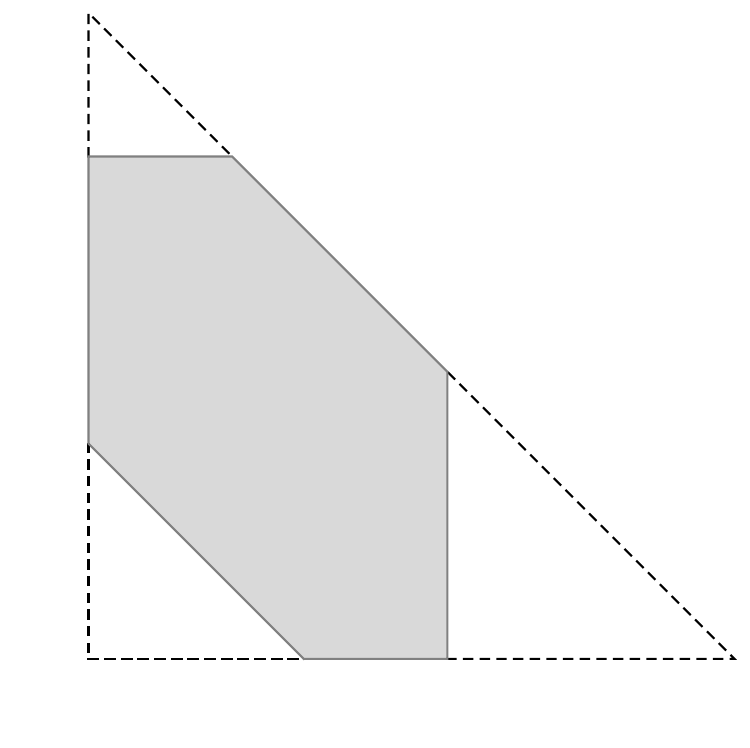}
\quad
\labellist
\small\hair 1.5pt
 \pinlabel {$\ell_1$} [ ] at 30 103
 \pinlabel {$\ell_2$} [ ] at 306 98
 \pinlabel {$\ell_3$} [ ] at 148 377
 \pinlabel {$N-\ell_4$} [ ] at 104 153
 \pinlabel {$N-\ell_4$} [ ] at 195 156
 \pinlabel {$N-\ell_4$} [ ] at 153 225
\endlabellist
\centering
\includegraphics[scale=0.48]{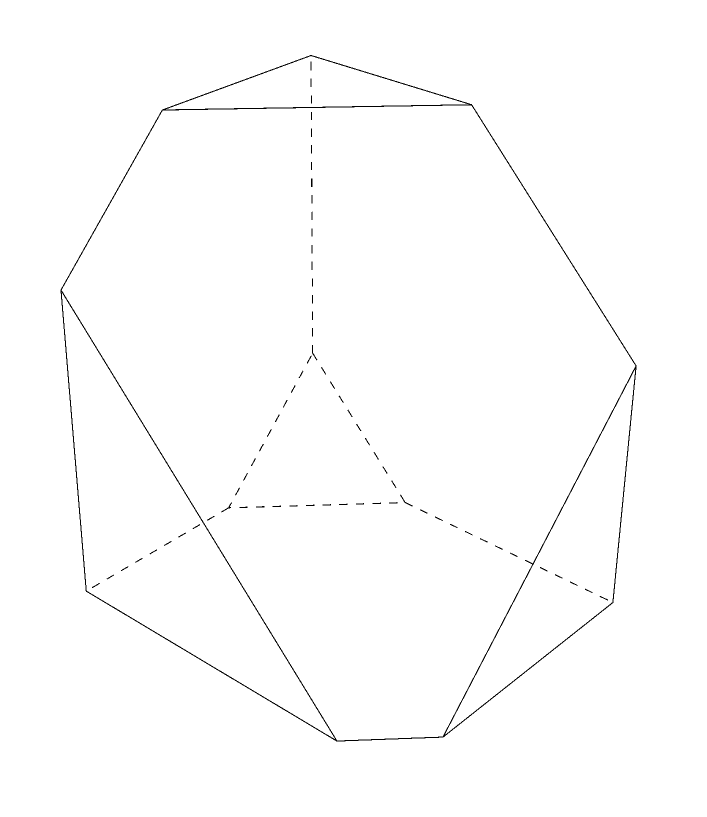}
\vspace{-0.2in}
\caption{Domain $V_{\ell,N}^d$ for $d =2$ and $d=3$}
\end{figure}

More specifically, for $N \in \NN$ and 
$\ell_i \in \NN_0$ with $\ell_i \le N$, $1 \le i \le d+1$, we consider the discrete polyhedral domain 
\begin{equation}\label{eq:poly-domain}
   V_{\ell, N}^d: = \left \{x \in \NN_0^d: 0 \le x_i \le \ell_i, \, 1 \le i \le d, \, \hbox{and}\, N-\ell_{d+1} \le |x| \le N\right\},
\end{equation}
where $|x|=x_1+\cdots +x_d$.
Let $(a)_n := a(a+1)\ldots (a+n-1)$ be the Pochhammer symbol. On the domain $V_{\ell,N}^d$, we define a positive weight 
function 
\begin{equation}\label{eq:poly-weight}
  \sH_{\ell,N}(x) =  \frac{N!}{(-|\ell |)_N}\prod_{i=1}^d \frac{(-\ell_i)_{x_i}}{x_i!} \frac{(-\ell_{d+1})_{N-|x|}}{(N-|x|)!}, 
\end{equation}
which corresponds to the hypergeometric distribution on the discrete polyhedral domain. Our goal is to 
construct orthogonal polynomials with respect to the inner product 
\begin{equation} \label{eq:Hahn-ipd}
  \la f, g\ra_{\ell,N} =  \sum_{x \in V_{\ell,N}^d} f(x) g(x)  \sH_{\ell,N} (x),
\end{equation}
normalized so that $\la 1,1\ra_{\ell,N} =1$, and study partial difference operators that have these orthogonal polynomials as common 
eigenfunctions.

Let $\Pi_N^d$ denote the space of polynomials of total degree at most $N$ in $d$ variables. Let $\CI(V_{\ell,N}^d)$ denote 
the ideal of polynomials that vanish on $V_{\ell,N}^d$. It is known that the space of orthogonal polynomials, denoted by
$\Pi_{\ell,N}^d$, with respect to $\la \cdot,\cdot\ra_{\ell,N}$ can be identified with $\RR[x_1,\ldots,x_d]/\CI(V_{\ell,N}^d)$ and
there is a lattice set $\Lambda_{\ell,N}^d$ such that every polynomial in the quotient space can be written as
\begin{equation} \label{Lambda-set}
  \sP(x) = \sum_{\nu \in \Lambda_{\ell,N}^d} c_\nu x^\nu.  
\end{equation}
It is surprisingly difficult to determine $\Lambda_{\ell,N}^d$ in our case, which in turn causes difficulty in the construction of a basis for $\Pi_{\ell,N}^d$.  
However, it turns out, that a basis of $\Pi_{\ell,N}^d$ can be given explicitly in terms of the classical Hahn polynomials, which 
we shall call Hahn polynomials on the polyhedron. Furthermore, this basis is uniquely determined by a family of commuting 
partial difference operators that have the Hahn polynomials on the polyhedron as common eigenfunctions. 

When $\k_i=-\ell_i-1$ are real numbers such that $\k_i>-1$, the weight function \eqref{eq:poly-weight} becomes the Dirichlet multinomial distribution on the discrete simplex
\begin{equation}\label{eq:VNd}
   V_N^d: = \{\nu \in \NN_0^d: |\nu| \le N\}. 
\end{equation}
In this case, the orthogonal polynomials are the Hahn polynomials of several variables that have been studied extensively; 
see, for example, \cite{Du,GI, GS, IX07, IX17, KM, X15} and the references therein. In particular, an explicit orthogonal basis 
was introduced in \cite{KM} and reformulated later in \cite{IX07} in its present form given here (see \eqref{eq:Hahn-simplex}). 
The second-order difference operators that have orthogonal polynomials as eigenfunctions are classified 
in \cite{IX07}, where it was realized that the classification includes not only the Hahn polynomials on $V_N^d$, but also 
orthogonal polynomials on discrete product domains and more general domains that can be obtained form the discrete 
simplex by cutting off some, but not all, of its corners. The latter is equivalent to setting some, but not all, of the $\k_i$ to be 
negative integers. In all these cases, the set of the indices, $H_{\ell,N}^d$, of orthogonal polynomials can be identified with the 
lattice set $V_{\ell,N}^d$, on which the inner product is defined. 

Our setup in this paper corresponds to setting all $\k_i$ as negative integers. This case turns out to be very different from all 
previous cases in several fundamental aspects. The first major difference is that the index set $H_{\ell,N}^d$ and the lattice set
$V_{\ell,N}^d$ are no longer the same. In fact, they now have very different structures. What is still true is that these two sets 
have the same cardinality; that is, $|H_{\ell,N}^d| =|V_{\ell,N}^d|$. The proof of this fact turns out to be a highly
nontrivial problem of counting integer lattice points in a polyhedral domain. For $d =3$, the example in
Figure 2 gives an indication of what the task amounts to, where each cube in the left figure corresponds to one lattice point 
in $H_{\ell,N}^3$. The equation $|H_{\ell,N}^d| =| V_{\ell,N}^d|$ means that the two figures contain the same number of 
lattice points. 
\begin{figure}[htbp]  
\centering
 \includegraphics[width=2.5in]{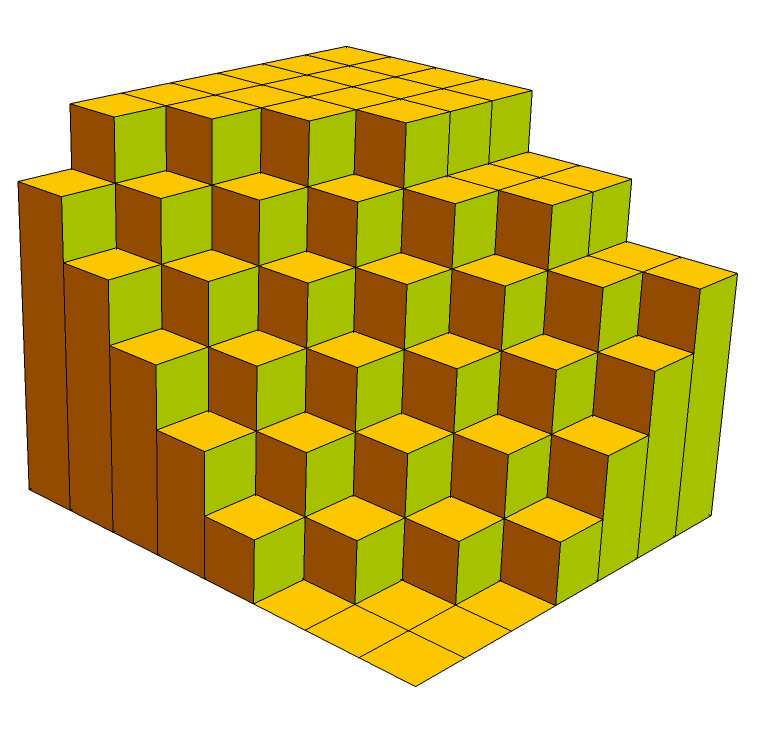} \includegraphics[width=2.4in]{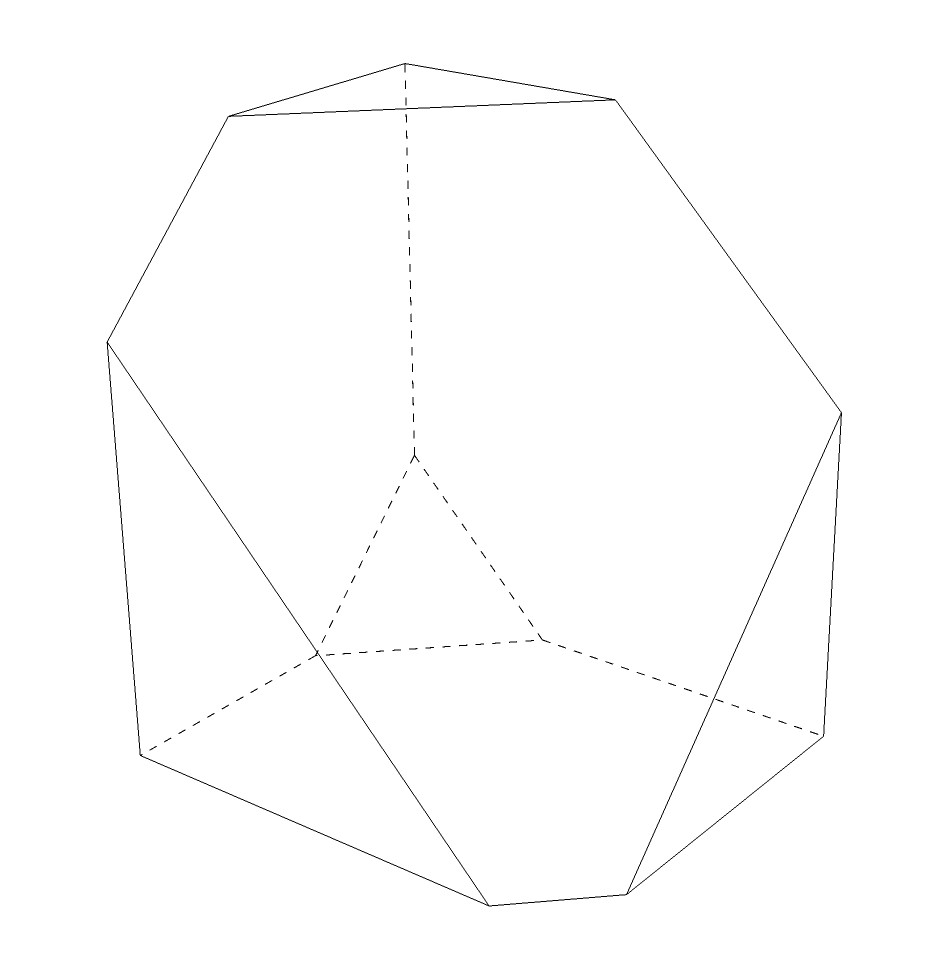}   
 \caption{$\ell=(6,7,5,8)$, $N = 10$. Left: lattice points in $H_{\ell,N}^3$. Right: boundary of $V_{\ell,N}^3$.} 
\end{figure}
Moreover, the complex structure of $H_{\ell,N}^d$ implies that there is no immediate description of the space $\Pi_{\ell,N}^d$ of 
all orthogonal polynomials in terms of a basis of monomials. However, there is a natural algebra of self-adjoint partial difference 
operators that acts on $\Pi_{\ell,N}^d$, and the index set can be characterized by an appropriate commutative subalgebra, which 
can be simultaneously diagonalized, providing an orthogonal basis of polynomials. 

The self-adjoint partial difference operators can be thought of as symmetries for a discrete quantum Hamiltonian system.
The latter can be regarded as an extension of the generic quantum superintegrable system on the sphere 
$\SS^{d}=\{z\in\RR^{d+1}: z_1^2+\cdots+z_{d+1}^2=1\}$, with Hamiltonian
\begin{equation}\label{1.6}
\mathcal{H}=\Delta_{\SS^d}+\sum_{k=1}^{d+1}\frac{b_k}{z_k^2},
\end{equation}
where $\Delta_{\SS^d}$ is the Laplace-Beltrami operator on the sphere and $\{b_k\}_{k=1,\dots,d+1}$ are parameters. 
The system on the sphere has been extensively studied in the literature as an important example of a second-order
superintegrable system, possessing $2d-1$ second-order algebraically independent symmetries, see for instance 
\cite{DGVV,I17,KMP,KMT} and the references therein. Recently, in \cite{I17}, it was shown that the symmetry operators for 
the Hamiltonian $\mathcal{H}$ define a representation of the Kohno-Drinfeld Lie algebra on the space of polynomials 
orthogonal with respect to the Jacobi weight function on the simplex, and an explicit set of $2d-1$ generators for the 
symmetry algebra was determined. It is known that the Jacobi polynomials on the simplex are generating functions 
for the Hahn polynomials on the discrete simplex, so one might expect that the discrete quantum Hahn system and the continuous quantum system on the sphere are closely related.  Nonetheless, it is surprising that essentially all properties of the continuous system on the sphere can be extended to 
the discrete system, as we show in this paper. 

Moreover, the Hahn polynomials on the simplex are on the very top of the classification in  \cite{IX07} of discrete orthogonal polynomials which are eigenfunctions of second order partial difference operators, and Krawtchouk, Meixner and Charlier polynomials of several variables can be regarded as limiting cases. Thus, we obtain several other discrete superintegrable systems via limits and describe their symmetry algebras. We also show that these systems extend the quantum harmonic oscillator and we derive its symmetry algebra and generators as a corollary. 

The paper is organized as follows. In the next section we introduce and study the Hahn polynomials on polyhedra, but 
postpone all technical proofs to later sections. Difference operators and quantum integrability will be studied in Section~\ref{se3}. In the
case when $d =2$, it is possible to say more about the correspondence between the index set and the lattice set, which is 
established in Section \ref{se4}. Finally, the counting of integer lattice points in the polyhedron $H_{\ell,N}^d$ is carried out 
in Section~\ref{se5}. 

\section{Hahn polynomials on polyhedra}\label{se2}

We start the section with the definition of Hahn polynomials on polyhedra and we formulate our main results for these
polynomials. The restriction  on admissible polyhedra is discussed in the second subsection. We will need the classical Hahn
polynomials with negative integer parameters, which we discuss in the third subsection, and we review Hahn polynomials 
of several variables and show how they are related to the framework of our study in the fourth subsection.  

\subsection{Hahn polynomials on polyhedra}\label{ss2.1}
For $N \in \NN$ and $1 \le i \le d+1$, let $\ell_i \in \NN$ and $\ell_i \le N$, and assume further that
\begin{equation} \label{eq:l-condition}
  \ell_i+ \ell_{j} \ge N, \qquad 1 \le i <j \le d+1. 
\end{equation}
Then the weight function $\sH_{\ell,N}$ defined in \eqref{eq:poly-weight} is positive on the discrete polyhedron $V_{\ell,N}^d$
defined in \eqref{eq:poly-domain}. When all inequalities in \eqref{eq:l-condition} are strict, the set is a hexagon for $d =2$ and 
a truncated tetrahedron, irregular in general, for $d =3$,  as shown in Figure 1. The domain degenerates when the inequalities in
\eqref{eq:l-condition} do not hold for one or more pairs of indices; see the discussion in the next subsection. 
 
For a finite set $S$, we denote its cardinality by $|S|$. It is easy to see that the 
number of lattice points in $V_{\ell,N}^d$ is exactly 
\begin{equation} \label{eq:dimension}
  |V_{\ell,N}^d| = \binom{N+d}{d} - \sum_{i=1}^{d+1} \binom{N-\ell_i+d-1}{d}.
\end{equation}

We consider orthogonal polynomials with respect to the inner product $\la \cdot,\cdot \ra_{\ell,N}$, defined in \eqref{eq:Hahn-ipd}. 
From the theory of orthogonal polynomials on general discrete sets \cite{X04}, we know that orthogonal polynomials on 
$V_{\ell,N}^d$ belong to the space 
$$
\Pi_{\ell,N}^d =  \RR[x_1,\ldots,x_d]/ \CI(V_{\ell,N}^d).
$$
To motivate this definition, note that if $\sQ \in \CI(V_{\ell,N}^d)$, then $\la \sQ, \sP \ra_{\ell,N} =0$ for all $\sP$, so it is natural 
to think of $\sQ$ as the zero vector in the space. From this definition, it is easy to see that
\begin{equation} \label{eq:OPdim}
  \dim \Pi_{\ell,N}^d = |V_{\ell,N}^d|. 
\end{equation}
Applying the Gram-Schmidt process on monomials allows us to generate a 
basis of orthogonal polynomials in $\Pi_{\ell,N}^d$. 
It also leads to the construction of a lattice set 
$\Lambda_{\ell,N}^{d}\subset \NN_0^{d}$ such that every 
polynomial in $\Pi_{\ell,N}^d $ can be uniquely written as a sum of monomials $x^\nu$ with $\nu \in \Lambda_{\ell,N}^d$ as in 
\eqref{Lambda-set}. However, all difference equations and recurrence relations for these polynomials should be considered as 
elements in $\Pi_{\ell,N}^d$, i.e. modulo the ideal $\CI(V_{\ell,N}^d)$.

\begin{thm}
Let $N \in \NN$ and $\ell_i \in \NN$. Then  
\begin{equation} \label{eq:ideal-basis}
 \CI(V_{\ell,N}^d) \cap \Pi_N^d = \bigoplus_{i=1}^d (-x_i)_{\ell_i+1} \Pi_{N-\ell_i-1}^d 
      \bigoplus (N-|x|)_{\ell_{d+1}+1}\Pi_{N-\ell_{d+1}-1}^d. 
\end{equation}
Furthermore, 
\begin{equation} \label{eq:OPspace}
      \Pi_N^d = \Pi_{\ell,N}^d \oplus ( \CI(V_{\ell,N}^d) \cap \Pi_N^d).
\end{equation}
\end{thm}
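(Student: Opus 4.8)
The plan is to establish \eqref{eq:ideal-basis} first and then deduce \eqref{eq:OPspace} formally. Abbreviate the generators of the summands by $g_i:=(-x_i)_{\ell_i+1}$ for $1\le i\le d$ and $g_{d+1}:=(N-|x|)_{\ell_{d+1}+1}$; each has degree $\ell_i+1$, so $g_i\,\Pi_{N-\ell_i-1}^d\subseteq\Pi_N^d$, and, as noted next, each $g_i$ vanishes on $V_{\ell,N}^d$, which together give the inclusion ``$\supseteq$'' in \eqref{eq:ideal-basis}. Here $g_i$ vanishes precisely when $x_i\in\{0,1,\dots,\ell_i\}$ (for $i\le d$) and $g_{d+1}$ vanishes precisely when $|x|\in\{N-\ell_{d+1},\dots,N\}$, so both vanish on $V_{\ell,N}^d$ by definition of the domain. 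The whole argument is organized around the corners cut off from the simplex $V_N^d$: put $C_i:=\{x\in V_N^d:x_i\ge\ell_i+1\}$ for $i\le d$ and $C_{d+1}:=\{x\in V_N^d:|x|\le N-\ell_{d+1}-1\}$. Hypothesis \eqref{eq:l-condition} makes the $C_i$ pairwise disjoint, and a direct inspection gives $V_N^d\setminus V_{\ell,N}^d=\bigsqcup_{i=1}^{d+1}C_i$. Since the simplex lattice $V_N^d$ is a determining set for $\Pi_N^d$, evaluation is an isomorphism $\Pi_N^d\xrightarrow{\sim}\RR^{V_N^d}$, under which $\CI(V_{\ell,N}^d)\cap\Pi_N^d$ corresponds to the functions supported on the complement, i.e.\ to $\bigoplus_{i=1}^{d+1}\RR^{C_i}$.

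Next I would record the separation property, which is the only place \eqref{eq:l-condition} enters. I claim each $g_i$ vanishes identically on every foreign corner $C_j$ with $j\ne i$, while being nowhere zero on its own corner $C_i$. Indeed, for $i,j\le d$ with $i\ne j$, on $C_j$ one has $x_i\le|x|-x_j\le N-\ell_j-1\le\ell_i-1$, the last step by $\ell_i+\ell_j\ge N$; hence $x_i\in\{0,\dots,\ell_i\}$ and $g_i=0$. The cases $i\le d$, $j=d+1$ and $i=d+1$, $j\le d$ are entirely analogous, each using a single inequality from \eqref{eq:l-condition}. Nonvanishing on $C_i$ is immediate, since there $x_i\ge\ell_i+1$ (respectively $|x|\le N-\ell_{d+1}-1$) lies outside the zero set of $g_i$. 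Consequently, under the isomorphism of the previous paragraph, the summand $g_i\,\Pi_{N-\ell_i-1}^d$ is carried into the single block $\RR^{C_i}$, so the induced map $\bigoplus_{i}g_i\,\Pi_{N-\ell_i-1}^d\to\bigoplus_i\RR^{C_i}$ is block-diagonal.

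The crux is then to show that each block $g_i\,\Pi_{N-\ell_i-1}^d\to\RR^{C_i}$ (restriction to $C_i$) is a bijection. As $g_i$ is nowhere zero on $C_i$, this amounts to proving that a polynomial $P\in\Pi_{N-\ell_i-1}^d$ vanishing on $C_i$ vanishes identically. For $i\le d$ the shift $x_i\mapsto x_i+\ell_i+1$ identifies $C_i$ with the full simplex $V_{N-\ell_i-1}^d$ and sends $P$ to a polynomial of the same degree $\le N-\ell_i-1$; since $V_{N-\ell_i-1}^d$ is a determining set for $\Pi_{N-\ell_i-1}^d$, this forces $P=0$. For $i=d+1$ the corner $C_{d+1}$ already equals $V_{N-\ell_{d+1}-1}^d$ and the same conclusion holds. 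Because $\dim g_i\Pi_{N-\ell_i-1}^d=\binom{N-\ell_i+d-1}{d}=|C_i|$, injectivity of each block is equivalent to bijectivity, so the block-diagonal map above is an isomorphism. Comparing it with the isomorphism $\CI(V_{\ell,N}^d)\cap\Pi_N^d\xrightarrow{\sim}\bigoplus_i\RR^{C_i}$, and recalling $\bigoplus_i g_i\Pi_{N-\ell_i-1}^d\subseteq\CI(V_{\ell,N}^d)\cap\Pi_N^d$, we conclude simultaneously that the sum is direct and that it fills out all of $\CI(V_{\ell,N}^d)\cap\Pi_N^d$; this is exactly \eqref{eq:ideal-basis}.

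Finally, \eqref{eq:OPspace} is formal. From $V_{\ell,N}^d\subseteq V_N^d$ we get $\CI(V_N^d)\subseteq\CI(V_{\ell,N}^d)$, and the isomorphism $\Pi_N^d\cong\RR[x_1,\dots,x_d]/\CI(V_N^d)$ shows that the composite $\Pi_N^d\hookrightarrow\RR[x_1,\dots,x_d]\twoheadrightarrow\Pi_{\ell,N}^d$ is surjective with kernel exactly $\CI(V_{\ell,N}^d)\cap\Pi_N^d$; any vector-space complement of this kernel maps isomorphically onto $\Pi_{\ell,N}^d$, giving \eqref{eq:OPspace}. I expect the main obstacle to be the separation property of the second paragraph: it is precisely the hypothesis $\ell_i+\ell_j\ge N$ that forces $x_i\le\ell_i-1$ on each foreign corner and thereby decouples the generators, turning the restriction map into a block-diagonal isomorphism. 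The remaining ingredients are the elementary counts $|C_i|=\binom{N-\ell_i+d-1}{d}=\dim\Pi_{N-\ell_i-1}^d$ (consistent with \eqref{eq:dimension} and \eqref{eq:OPdim}) and the classical fact that a simplex lattice $V_M^d$ is a determining set for $\Pi_M^d$.
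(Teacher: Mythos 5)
Your proof is correct, and its core ingredients coincide with the paper's: the decomposition of $V_N^d\setminus V_{\ell,N}^d$ into the corners $C_i$, the separation property (resting exactly on \eqref{eq:l-condition}) that each generator $g_i$ vanishes on every foreign corner while being nonzero on its own, and the unisolvence of a translated simplex lattice to force $p_i\equiv 0$. The difference is in the bookkeeping. The paper first proves linear independence of the sum --- assuming $\sum_i g_ip_i$ vanishes, it restricts to each corner (your $C_i$, called $V_i$ there) to kill the foreign terms and then invokes unisolvence --- and then obtains equality in \eqref{eq:ideal-basis} by a global dimension comparison, using the lattice count \eqref{eq:dimension} together with \eqref{eq:OPdim}. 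You instead run everything through the evaluation isomorphism $\Pi_N^d\cong\RR^{V_N^d}$ and show that the restriction map $\bigoplus_i g_i\Pi_{N-\ell_i-1}^d\to\bigoplus_i\RR^{C_i}$ is block-diagonal with bijective blocks, which yields directness and surjectivity simultaneously from the per-corner counts $|C_i|=\dim\Pi_{N-\ell_i-1}^d$, with no appeal to \eqref{eq:dimension} or \eqref{eq:OPdim}. A minor bonus of your formulation: the paper's independence hypothesis \eqref{eq:ind} is literally stated as vanishing for all $x\in V_{\ell,N}^d$, which is vacuous since every term already vanishes there; what is meant, and what the paper's argument actually uses, is vanishing as a polynomial (equivalently on all of $V_N^d$, in particular on the corners). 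Your block-diagonal setup never encounters this issue. The treatment of \eqref{eq:OPspace} is the same formal quotient argument in both.
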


\begin{proof}
Since $\{ (-x_1)_{\nu_1}(-x_2)_{\nu_2}\cdots (-x_d)_{\nu_d}: |\nu|=N+1\}$ generates the polynomial ideal $\CI(V_N^d)$ and
$V_{\ell,N}^d \subset V_N^d$, we only need to consider $\CI(V_{\ell,N}^d) \cap \Pi_N^d$ and \eqref{eq:OPspace} follows immediately from \eqref{eq:ideal-basis}. Note next that the right-hand side of \eqref{eq:ideal-basis} is contained in the left-hand side, and therefore, the proof of \eqref{eq:ideal-basis} will follow if can show that these two spaces have the same dimension. 

Let $p_i \in \Pi_{N-\ell_i-1}^d$ be an arbitrary polynomial in $\Pi_{N-\ell_i-1}^d$. We show below that if 
\begin{equation}\label{eq:ind}
   \sum_{i=1}^d  (-x_i)_{\ell_i+1} p_i(x) + (N-|x|)_{\ell_{d+1}+1} p_{d+1}(x) = 0, \quad \forall x \in V_{\ell,N}^d, 
\end{equation}
then $p_i(x) = 0$ for $i =1,\ldots, d+1$. This will imply that the space in the right-hand side of \eqref{eq:ideal-basis} has dimension $\sum_{j=1}^{d+1}\binom{N-\ell_j-1+d}{d}$, which combined with \eqref{eq:dimension} and \eqref{eq:OPdim} will complete the proof.

Let $\nu \in V_1:=\{x \in \NN_0^d:  \ell_1 < x_1 \le N, |x| \le N\}$. It follows that
$(-v_i)_{\ell_i+1} =0$ for $i =2,\ldots,d$ and $(N-|v|)_{\ell_{d+1}+1} =0$, and therefore \eqref{eq:ind} implies that 
$(-v_1)_{\ell_1+1} p_1(v)=0$ for all $v \in V_1$. Let $y_1 = x_1 -\ell_1-1$ and $y_i = x_i$ for $i=2,\ldots d$. Note that $V_1$ is a translation of $\{y\in\NN_0^{d}: |y| \le N-\ell_1-1\}$ and therefore the polynomial interpolation on the lattice points of $V_1$ is unique. Since $p_1 \in \Pi_{N-\ell_1-1}^d$ and $p_1$ vanishes on 
$V_1$, it follows that $p_1(x) \equiv 0$. The same argument works for $i =2,3,\ldots, d+1$. 
\end{proof}

\begin{cor}
The ideal $\CI(V_{\ell,N}^d)$ is generated by the set 
$$
  \{(-x_i)_{\ell_i+1}, \,\, 1 \le i \le d\} \cup \{ (N-|x|)_{\ell_{d+1}+1}\} \cup 
   \{ (-x_1)_{\nu_1}(-x_2)_{\nu_2}\cdots (-x_d)_{\nu_d}: |\nu|=N+1\}.
$$
\end{cor}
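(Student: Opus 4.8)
The plan is to prove the two inclusions $\CJ \subseteq \CI(V_{\ell,N}^d)$ and $\CI(V_{\ell,N}^d) \subseteq \CJ$, where $\CJ$ denotes the ideal generated by the three families in the statement. The first inclusion is immediate, since each generator already vanishes on $V_{\ell,N}^d$: the factor $(-x_i)_{\ell_i+1}$ vanishes whenever $x_i \in \{0,1,\ldots,\ell_i\}$, which holds at every point of $V_{\ell,N}^d$; likewise $(N-|x|)_{\ell_{d+1}+1}$ vanishes whenever $N-\ell_{d+1} \le |x| \le N$; and each product $(-x_1)_{\nu_1}\cdots(-x_d)_{\nu_d}$ with $|\nu|=N+1$ already vanishes on the larger simplex lattice $V_N^d \supseteq V_{\ell,N}^d$, because any $x$ with $|x|\le N$ must satisfy $x_j<\nu_j$ for some $j$.

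For the reverse inclusion I would begin with an arbitrary $f \in \CI(V_{\ell,N}^d)$ and first lower its degree using only the third family. With respect to any graded term order, the leading monomial of $(-x_1)_{\nu_1}\cdots(-x_d)_{\nu_d}$ is $x^\nu$ with $|\nu|=N+1$, so these leading monomials exhaust all monomials of total degree $N+1$. Since every monomial $x^\alpha$ with $|\alpha|\ge N+1$ is divisible by some $x^\nu$ with $|\nu|=N+1$, ordinary polynomial division yields a decomposition $f = g + h$ in which $h$ lies in the ideal generated by the third family (hence $h\in\CJ$) and $\deg g \le N$, i.e. $g\in\Pi_N^d$. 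Equivalently, one may invoke the fact, used already in the proof of the Theorem, that the third family generates $\CI(V_N^d)$, together with the isomorphism $\Pi_N^d \cong \RR[x_1,\ldots,x_d]/\CI(V_N^d)$, which follows from unisolvence of the simplex lattice and $\dim\Pi_N^d = |V_N^d|$.

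The next step is to note that $g$ still vanishes on $V_{\ell,N}^d$. Indeed $h$ belongs to the ideal generated by the third family, which is contained in $\CI(V_N^d) \subseteq \CI(V_{\ell,N}^d)$ because $V_{\ell,N}^d \subseteq V_N^d$; thus $g = f - h$ is a difference of two polynomials that vanish on $V_{\ell,N}^d$. Hence $g \in \CI(V_{\ell,N}^d)\cap\Pi_N^d$, and the Theorem applies directly: $g$ lies in the direct sum $\bigoplus_{i=1}^d (-x_i)_{\ell_i+1}\Pi_{N-\ell_i-1}^d \oplus (N-|x|)_{\ell_{d+1}+1}\Pi_{N-\ell_{d+1}-1}^d$. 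Each summand is visibly a polynomial multiple of one of the first two families of generators, so $g\in\CJ$, and therefore $f = g+h \in \CJ$, completing the reverse inclusion.

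I expect the only point requiring any care to be the degree-reduction step, namely verifying that division by the third family terminates with a remainder of degree at most $N$ while keeping $f-g$ inside $\CJ$; this is a routine Gröbner-type argument once one observes that the leading terms cover all monomials of degree $N+1$. The real content of the Corollary is carried by the preceding Theorem, which already decomposes $\CI(V_{\ell,N}^d)\cap\Pi_N^d$ in terms of the first two families. The Corollary is then essentially the observation that reduction modulo the simplex ideal $\CI(V_N^d)$ reduces the study of the full ideal to degree $\le N$, where the Theorem takes over.
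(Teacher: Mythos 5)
Your proof is correct and takes essentially the route the paper intends: the Corollary is stated without proof precisely because it follows from the Theorem in the way you argue, namely reducing an arbitrary element of $\CI(V_{\ell,N}^d)$ modulo the third family (which generates $\CI(V_N^d)$, as the paper notes at the start of the Theorem's proof) to a remainder in $\Pi_N^d$ still vanishing on $V_{\ell,N}^d$, and then applying the decomposition \eqref{eq:ideal-basis}. Your degree-reduction step via leading terms (or equivalently via unisolvence of interpolation on $V_N^d$) is a standard and valid way to make this reduction precise.
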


The lattice set $\Lambda_{\ell,N}^d$ in \eqref{Lambda-set} is not uniquely determined by $V_{\ell,N}^d$. In fact, it is not 
clear how to identify this set in our setting. This makes the construction of an orthogonal basis for $\Pi_{\ell,N}^d$ difficult. It
turns out, however, that a basis of $\Pi_{\ell,N}^d$ can be given explicitly in terms of the classical Hahn polynomials of one variable,
defined by 
$$
  \sQ_n(x; a, b, N) = {}_3F_2\left( \begin{matrix} -n, \,  n+ a +b+1, \, -x \\ a+1, \, -N \end{matrix}; 1 \right),
$$
where $a, b$ are two real parameters, $N$ is a positive integer. 
To describe our orthogonal polynomials, we will need the following notations: For $y=(y_1,\ldots, y_{d}) 
\in \RR^{d}$ and $1 \le j \le d$, we define 
\begin{equation}\label{xsupj}
    \yb_j := (y_1, \ldots, y_j) \quad \hbox{and}\quad \yb^j := (y_j, \ldots, y_d), 
\end{equation}
and also define $\yb_0 := 0$ and $\yb^{d+1} := 0$. It follows that $\yb_d = \yb^1 = y$, 
and 
$$
   |\yb_j| = y_1 + \cdots + y_j,   \quad |\yb^j| = y_j + \cdots + y_d, \quad\hbox{and}\quad
   |\yb_0| = |\yb^{d+1}| = 0.
$$
For $\ell = (\ell_1,\ldots, \ell_{d+1})$, we have $\bell^j := (\ell_j, \ldots, \ell_{d+1})$ for $1 \le j \le d+1$. Furthermore,
\begin{equation*}
   a_j:=a_j(\kappa,\nu):= - |\bell^{j+1}| + 2 |\bnu^{j+1}| -1, \qquad 1 \le j \le d.
\end{equation*}
We now define Hahn polynomials on the polyhedron by 
\begin{align}\label{eq:Hahn-simplex}
\sQ_\nu(x;\ell, N) =& \frac{(-1)^{|\nu|}}{(-N)_{|\nu|}} 
\prod_{j=1}^d  \frac{(- \ell_j)_{\nu_j}}{(a_j+1)_{\nu_j}}(-N+|\xb_{j-1}|+|\bnu^{j+1}|)_{\nu_j}  \\ 
    & \times   \sQ_{\nu_j}(x_j; - \ell_j-1, a_j, N- |\xb_{j-1}|-|\bnu^{j+1}|). \notag
\end{align}

\begin{defn}\label{def:Hln}
For $N \in \NN$ and $\ell_i \in \NN$ satisfying \eqref{eq:l-condition}, define 
\begin{align*}
  H_{\ell,N}^d := \left \{   \nu \in \NN_0^d:  \right. & |\nu| \le N, \, |\nu| \le |\ell|-N,  \\
      & \left. \nu_j \le \ell_j, \,  2 |\bnu^{j+1}| \le |\bell^{j+1}|-\nu_j, \, 1 \le j \le d \right \}.
\end{align*}
\end{defn}
Note that $H_{\ell,N}^d$ is a set of lattice points in a polytope inside $V_N^d$ defined in \eqref{eq:VNd}.
The Hahn polynomials $\sQ(\cdot;\ell,N)$ on the polyhedron are defined when $\nu \in H_{\ell,N}^d$. 
More precisely, the following theorem holds: 

\begin{thm} \label{thm:OP-polyhedra}
Let $N \in \NN$ and let $\ell_i \in \NN$ satisfy \eqref{eq:l-condition}. Then 
\begin{enumerate}[\quad \rm (i)]
\item The polynomials $\sQ_\nu(\cdot; \ell,N)$ are orthogonal and satisfy 
$$
   \la \sQ_\nu(\cdot; \ell,N), \sQ_\mu(\cdot; \ell,N)  \ra_{\ell,N} = \sB_\nu(\ell, N) \delta_{\nu,\mu}
$$
for all $\nu,\mu \in H_{\ell,N}^d$, where 
\begin{align*}
\sB_\nu(\ell, N)  :=\frac{(-1)^{|\nu|}(-|\ell |)_{N+|\nu|}} {(-N)_{|\nu|} (-|\ell|)_N (-|\ell|)_{2|\nu|}} 
   \prod_{j=1}^d \frac{(-\ell_j+a_j)_{2\nu_j}  (-\ell_j)_{\nu_j} \nu_j! }
       { (-\ell_j+a_j)_{\nu_j} (a_j+1)_{\nu_j}}.
\end{align*}
\item The set $\{\sQ_\nu(\cdot ; \ell ,N): \nu \in H_{\ell,N}^d \}$ is a basis of $\Pi_{\ell,N}^d$. In particular,  
$$
 |H_{\ell,N}^d|=|V_{\ell,N}^d| = \dim \Pi_{\ell, N}^d.
$$
\end{enumerate}
\end{thm}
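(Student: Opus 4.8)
The plan is to prove the orthogonality and norm formula in (i) by induction on the dimension $d$, peeling off the last variable $x_d$, and then to deduce (ii) from (i) together with a lattice-point count. For (i), I would first rewrite the prefactor $(-N+|\xb_{j-1}|+|\bnu^{j+1}|)_{\nu_j}$ as $(-M_j)_{\nu_j}$ with $M_j:=N-|\xb_{j-1}|-|\bnu^{j+1}|$, so that the $j$-th factor of $\sQ_\nu$ is the one-variable Hahn polynomial $\sQ_{\nu_j}(x_j;-\ell_j-1,a_j,M_j)$ times explicit constants. The crucial structural fact is that $x_d$ enters only through the last factor and through the weight, and there $a_d=-\ell_{d+1}-1$, so that the relevant one-variable Hahn weight $\tfrac{(-\ell_d)_{x_d}}{x_d!}\tfrac{(-\ell_{d+1})_{M_d-x_d}}{(M_d-x_d)!}$ matches the parameters of that factor exactly. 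Summing over $x_d$ and using the one-variable Hahn orthogonality in the negative-integer-parameter regime (developed earlier in this section) yields $\delta_{\nu_d,\mu_d}$ times a one-variable norm depending on $M_d$. By Chu--Vandermonde the $x_d$-marginal of $\sH_{\ell,N}$ is again a Dirichlet-type weight in $\xb_{d-1}$, but with last parameter $\ell_d+\ell_{d+1}$ and total degree $N$; I would then check that the surviving $M_d$-dependent factors recombine to convert this marginal into $\sH_{\ell',N'}$ with $\ell'=(\ell_1,\dots,\ell_{d-1},\ell_d+\ell_{d+1}-2\nu_d)$ and $N'=N-\nu_d$. Under this reduction the inner data $a_1,\dots,a_{d-1}$ and $M_1,\dots,M_{d-1}$ are left unchanged, so the remaining sum is exactly the $(d-1)$-variable inner product $\la \sQ_{\nu'},\sQ_{\mu'}\ra_{\ell',N'}$ with $\nu'=(\nu_1,\dots,\nu_{d-1})$, up to an explicit constant, and the induction closes with the one-variable case as its base. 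The defining inequalities of $H_{\ell,N}^d$ are precisely what keeps each reduced triple admissible and each one-variable factor a genuine degree-$\nu_j$ polynomial.

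I expect the main obstacle in (i) to be the bookkeeping that shows the telescoping product of one-variable norms collapses to the closed form $\sB_\nu(\ell,N)$: one must carry the $M_d$-dependent Pochhammer factors together with the one-variable norm through the Chu--Vandermonde step and verify that they cancel the discrepancy between the literal $x_d$-marginal (last parameter $\ell_d+\ell_{d+1}$, degree $N$) and the reduced weight $\sH_{\ell',N'}$. This is routine but delicate, and I would isolate the one-variable norm in the negative-integer regime as a separate lemma, so that $\sB_\nu$ emerges telescopically as the stated product over $j$.

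For (ii), orthogonality together with the non-vanishing of $\sB_\nu(\ell,N)$ on $H_{\ell,N}^d$---which I would read off from the product formula, every factor being finite and nonzero under the defining inequalities---shows that $\{\sQ_\nu(\cdot;\ell,N):\nu\in H_{\ell,N}^d\}$ is a linearly independent subset of $\Pi_{\ell,N}^d$, whence $|H_{\ell,N}^d|\le\dim\Pi_{\ell,N}^d=|V_{\ell,N}^d|$. To promote this to a basis I must establish the reverse inequality, that is, the identity $|H_{\ell,N}^d|=|V_{\ell,N}^d|$, and this lattice-point count is the genuinely hard step. Since $|V_{\ell,N}^d|$ is the explicit inclusion--exclusion expression \eqref{eq:dimension}, the task is to count the integer points of the polytope in Definition~\ref{def:Hln} and match that formula; I would attempt this by induction on $d$, summing over $\nu_d$ and using the same reduction $(\ell,N)\mapsto(\ell',N-\nu_d)$ that drives (i), or equivalently by a generating-function computation. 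I expect this counting, rather than the orthogonality, to be the principal difficulty of the whole theorem.
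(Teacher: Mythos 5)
Your reduction of (ii) to the identity $|H_{\ell,N}^d|=|V_{\ell,N}^d|$ matches the paper, but your route to (i) has a genuine gap. The telescoping you describe rests on the claim that the defining inequalities of $H_{\ell,N}^d$ keep every inner one-variable sum inside the orthogonality range established in \S\ref{ss2.3}, i.e.\ that $\nu_d\le \ell_d+\ell_{d+1}-M_d$ with $M_d=N-|\xb_{d-1}|$ for every $\xb_{d-1}$ occurring in the outer sum. That is false. Take $d=2$, $\ell=(2,2,2)$, $N=3$, $\nu=(0,2)$: one checks $\nu\in H^2_{\ell,N}$ (indeed $2\nu_2=4\le \ell_2+\ell_3-\nu_1$), yet at $x_1=0$ the $x_2$-support is $\{1,2\}$, so the orthogonality of \S\ref{ss2.3} covers only degrees $0,1$, while $\nu_2=2$. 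The inner sum does vanish, but by a different mechanism: $\sQ_2(x_2;-3,-3,3)=\tfrac12(x_2-1)(x_2-2)$ is identically zero on the support. So your induction needs an extra degeneration lemma (when the degree exceeds the cardinality of the support, the Hahn factor vanishes on the support, and the analytically continued one-variable norm vanishes as well), and the corresponding case split in the outer sum; without it the step ``summing over $x_d$ \dots yields $\delta_{\nu_d,\mu_d}$ times a one-variable norm'' is not justified by anything proved in \S\ref{ss2.3}. The paper avoids this entirely: it observes that the orthogonality and norm relations of the real-parameter case on $V_N^d$ (\cite{KM,IX07}) are rational identities in $\k$, and that the inequalities defining $H_{\ell,N}^d$ are exactly the conditions under which no denominator such as $(a_j+1)_{\nu_j}$, $(-\ell_j+a_j)_{\nu_j}$, $(-|\ell|)_{2|\nu|}$ vanishes at $\k=-\ell-1$, so (i) follows by specialization. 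Once you find yourself proving continuation-type degeneration lemmas anyway, this direct continuation is both shorter and safer.

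The larger gap is in (ii). You correctly isolate $|H_{\ell,N}^d|=|V_{\ell,N}^d|$ as the crux and honestly flag it as the hard step, but you do not prove it, and in the paper this identity \emph{is} the bulk of the theorem, occupying all of Sections~\ref{se4} and~\ref{se5}. Moreover, the attack you sketch---induction on $d$ by summing over $\nu_d$ with the reduction $\ell'=(\ell_1,\dots,\ell_{d-1},\ell_d+\ell_{d+1}-2\nu_d)$, $N'=N-\nu_d$---runs into a concrete obstruction: while $H^d_{\ell,N}$ does decompose over $\nu_d$ into slices of exactly this reduced form, the reduced parameters violate the admissibility condition \eqref{eq:l-condition} (for instance $\ell'_d>N'$ whenever $\nu_d<\ell_d+\ell_{d+1}-N$, and $\ell_1>N'$ once $\nu_d>N-\ell_1$), and both the target formula \eqref{eq:dimension} and the inductive statement \eqref{5.2} are valid only under admissibility, since otherwise the inclusion--exclusion over the cut corners of the simplex requires overlap corrections. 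This is precisely why the paper takes a different route: it fixes $\ell_d,\ell_{d+1},N$, proves the difference equations of Lemma~\ref{le4.2} in the parameters $\ell_1,\dots,\ell_{d-1}$, anchors the induction at the base case $\ell_1=\cdots=\ell_{d-1}=N$ (Lemma~\ref{le4.3}), and handles $d=2$ separately by the height-function permutation of Theorem~\ref{thm:d=2shuffle}. As it stands, your proposal proves linear independence of the $\sQ_\nu$, $\nu\in H^d_{\ell,N}$, hence $|H_{\ell,N}^d|\le |V_{\ell,N}^d|$, but not the reverse inequality, so the basis claim remains open.
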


If we set $\ell_i = -\k_i -1$ and assume that $\k_i$ are real numbers satisfying $\k_i > -1$, then the polynomials
\eqref{eq:Hahn-simplex} become orthogonal polynomials defined on the lattice in the simplex $V_N^d$, first 
appeared in \cite{KM} and we follow the formulation in \cite{IX07}. It is easy to verify that the formulas of the
orthogonal polynomials and their norms in \cite{IX07} hold if the indices belong to $H_{\ell,N}^d$. In fact, this is how 
the set $H_{\ell,N}^d$  was discovered and defined. The orthogonality in (i) follows as a consequence. The orthogonality 
implies linear independence, so we only need to prove that $|H_{\ell,N}^d| =  |V_{\ell,N}^d|$. This last identity 
amounts to counting lattice points inside the polyhedron $H_{\ell,N}^d$, which turns out to be highly nontrivial, see 
the discussion below, and will be carried out in Sections \ref{se4} and \ref{se5}. 

The Hahn polynomials on the polyhedron can also be described by a family of commuting second order difference operators acting on $H_{\ell,N}^d$. These constructions will be discussed in Section~\ref{se3}. 

\subsection{Admissible polyhedra}\label{ss2.2}

The domain $V_{\ell,N}^d$ is contained in the simplex $T_N^d = \{x \in \RR^d: x_i \ge 0, |x| \le N\}$. The assumption 
\eqref{eq:l-condition} guarantees that the domain touches all faces of the simplex. If this condition is not satisfied, then a 
shift of variables and an adjustment of the values of some indices will change the domain to one that satisfies 
\eqref{eq:l-condition}, as can be seen in Figure 3, in which $\ell_1+\ell_3 = 5 < 6 = N$ and the weight
function is positive precisely on those lattices, represented by bullets, in the polygon.
\begin{figure}[htbp]  
\centering
 \includegraphics[width=2.2in]{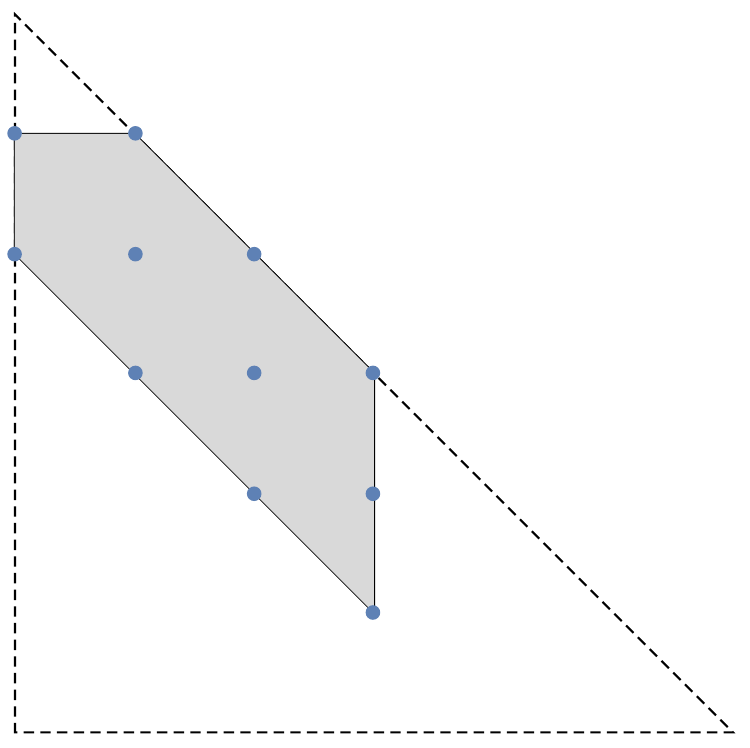}   
 \caption{$\ell_1 = 3$, $\ell_2 =5$, $\ell_3 = 2$ and $N =6$.} 
\end{figure}
It should be noted, however, that the weight function $\sH_{\ell,N}$ is no longer of the same type for the new region. 
As an example, let us assume $\ell_1+\ell_2 \ge N$, $\ell_2+\ell_3 \ge N$ but $\ell_1+\ell_3 < N$. Setting 
$$
\wh N = \ell_1+\ell_3,\quad \wh \ell_1 = \ell_1,\quad \wh \ell_2 = \ell_2 - (N-\wh N), \quad \wh \ell_3 = \ell_3
$$
and changing variables $y_1 = x_1$ and $y_2 = x_2 - (N-\wh N)$, it is easy to see that the domain $V_{\ell,N}^2$ 
becomes $V_{\wh \ell, \wh N}^2$ in $(y_1,y_2)$ and $\wh \ell_i + \wh \ell_j \ge \wh N$, but the weight function 
$\sH_{\ell,N}(x_1,x_2)$ becomes
$$
    \frac{N!}{(-N- \wh \ell_2)_{\hat N}\,(y_2+1)_{N-\wh N}} \frac{(- \wh \ell_1)_{y_1}(- \wh \ell_2)_{y_2}(- \wh \ell_3)_{\wh N- y_1-y_2}}
          { y_1!y_2! (\wh N- y_1-y_2)!},
$$
which contains, besides the multiplicative constant, an additional factor $(y_2+1)_{N-\wh N}$ in comparison with 
$\sH_{\wh \ell, \wh N}(y_1,y_2)$ in \eqref{eq:poly-weight}. 

The orthogonality of the Hahn polynomials on the polyhedron is defined on the lattice points in the polyhedron $H_{\ell,N}^d$, 
which has a rich geometry. In the case of $d =2$, the generic case of $H_{\ell, N}^2$ is a hexagonal domain, when 
$\ell_j+\ell_j > N$ for $1 \le i,j \le 3$, but it can degenerate to a pentagon, when $\ell_i+\ell_j = N$ for one pair of $i,j$, 
or a quadrilateral, when $\ell_i+\ell_j = N$ for two pairs of $i,j$, or a triangle, when $\ell_i+\ell_j = N$ for all three pairs of $i,j$, 
as shown in Figure 4. 

\begin{figure}
\centering
 \includegraphics[width=2in]{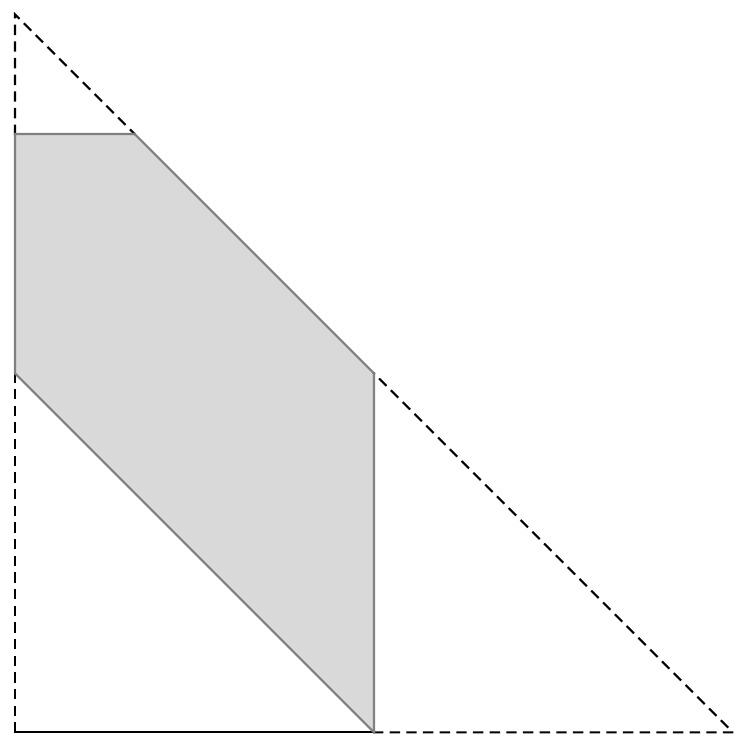}  \includegraphics[width=2in]{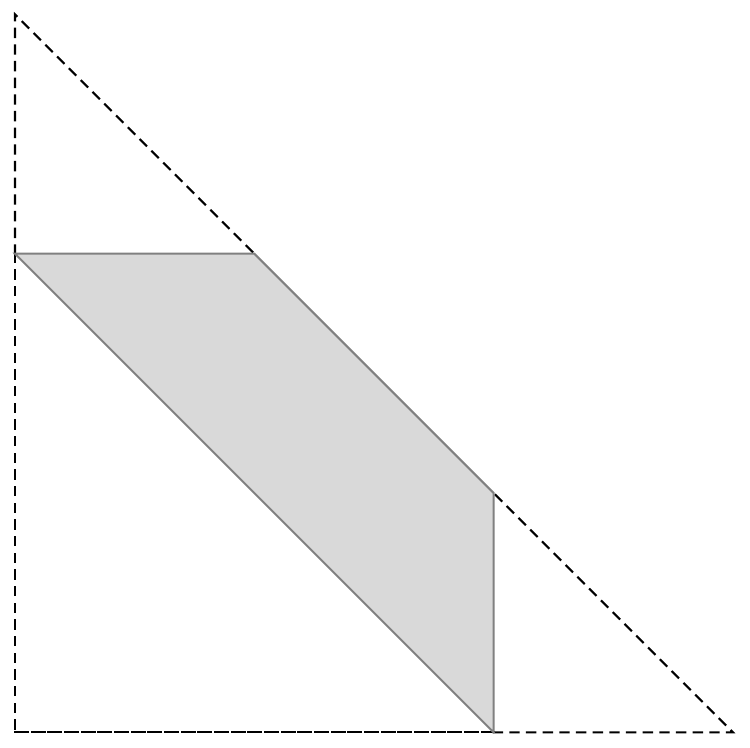}   
 \caption{$(N, \ell_1,\ell_2,\ell_3) = (6,3,5,3)$ (left) and $(6,4,4,2)$ (right)}
\end{figure}

For $d =2$, a very interesting case is when $\ell_1 = \ell_2 = \ell_3 = 2N/3$ for $N = 3 M$ for some 
$M\in \NN$, due to the complete symmetry. The symmetry, however, is best viewed when we consider 
homogeneous coordinates in $(x_1,x_2, x_3)$ with $x _3 = N-x_1-x_2$. In other words, the full symmetry is in the hyperplane 
$\{x \in \RR^3: x_1+x_2+x_3 = N\}$ of $\RR^3$, as shown in  Figure 5.

\begin{figure}[htbp]  
\centering
 \includegraphics[width=2in]{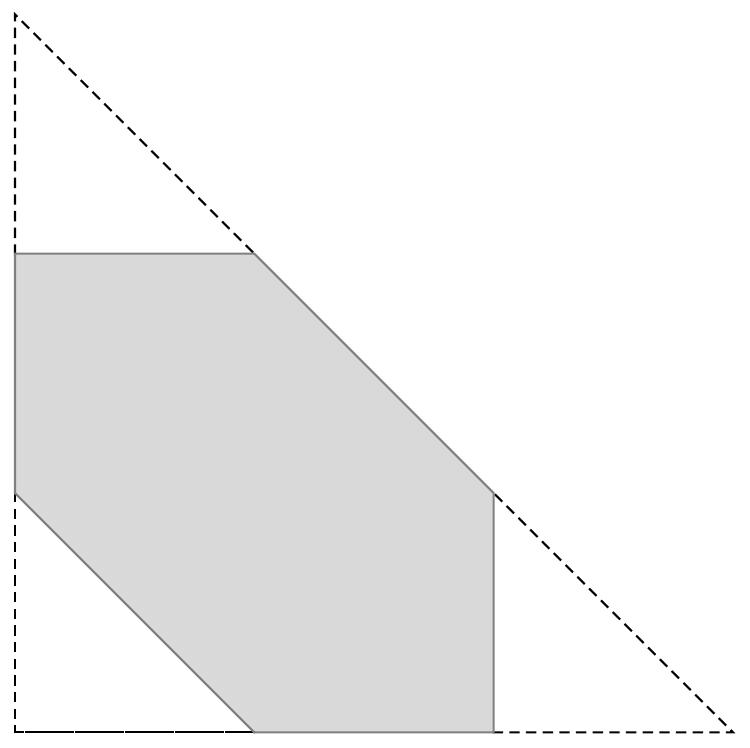}  \includegraphics[width=2.6in]{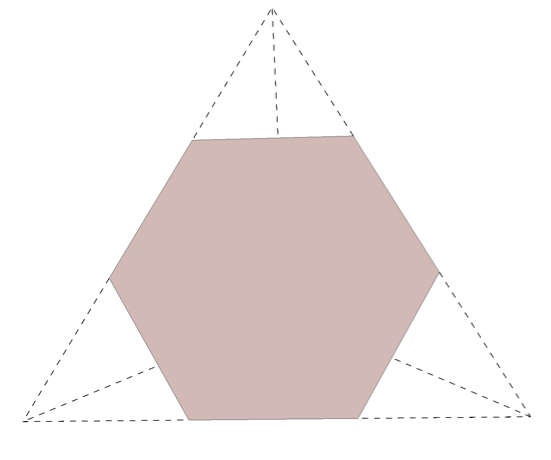}   
 \caption{$(N, \ell_1,\ell_2,\ell_3) = (3M,2M,2M,2M)$}
\end{figure}
 
This last comment also applies to polyhedra in $d \ge 3$. The full symmetry is attained in the variables 
$(x_1, \ldots, x_d, x_{d+1})$ on the hyperplane $x_1+\cdots+x_{d+1} = N$. 
For $d =3$, our polyhedron is a truncated tetrahedron, when
$\ell_i+\ell_j > N$ for all $1 \le i,j \le 4$, which has 4 hexagon faces and 4 triangular faces, and it also include
many degenerated cases, when $\ell_i+\ell_j =N$ for one or more pairs of $i,j$. The most symmetrical cases,
in the hyperplane of $\RR^4$, are truncated tetrahedron and octahedron, as shown in Figure 6. 

\begin{figure}[htbp]  
\centering
 \includegraphics[width=2.2in]{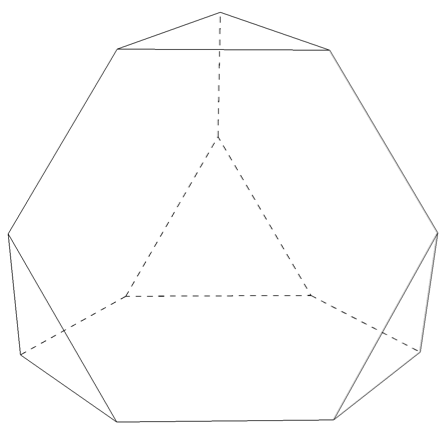} \quad \includegraphics[width=2.2in]{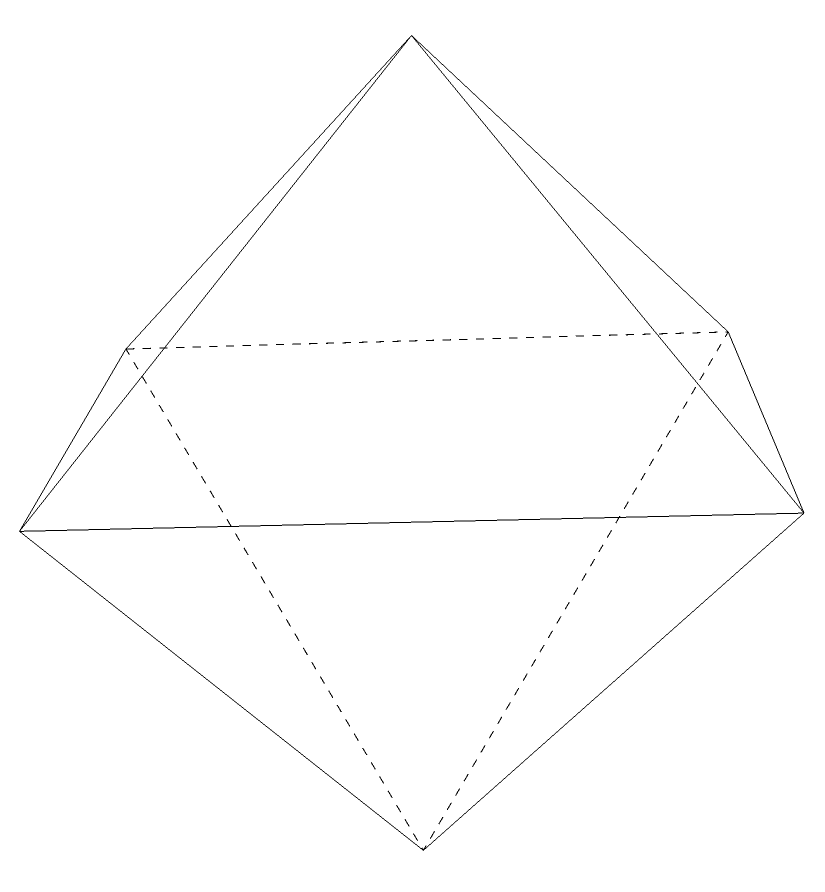}   
 \caption{$ \ell_1=\ell_2=\ell_3=\ell_4 = 2N/3$ (left) and $= N/2$ (right)}
\end{figure}

\subsection{Hahn polynomials with negative integer parameters}\label{ss2.3}
The classical Hahn polynomials $\sQ_n(\cdot;a,b,N)$ are orthogonal with respect to the inner product 
$$
  \la f, g\ra_{a,b} = \sum_{k=0}^N f(x) g(x) w_{a,b}(x) 
$$
when $a, b> -1$, where the weight function is defined on $\NN_N: = \{0, 1, \ldots, N\}$ by 
$$
 w_{a,b} (x) = \frac{(a+1)_x(b+1)_{N-x}}{x!(N-x)!}.
$$
In the previous subsection, we have used the Hahn polynomials with one or both parameters being negative 
integers, which needs clarification. 

If $a + 1 = -\ell$ for an $\ell \in \NN$ and $\ell \le N$, then the weight function  
$$
   w_{-\ell-1,b} (x) = \frac{(-\ell)_x(b+1)_{N-x}}{x!(N-x)!}
$$
is zero if $x > \ell$, so that the sum in the inner product is over $\NN_\ell$.  In this case, however, the weight function 
is no longer positive on the set $\NN_\ell$. By exchanging the positions of $a+1 = -\ell$ and $-N$ in the denominator
of the ${}_3 F_2$ function, it is easy to see that 
\begin{equation}\label{eq:a=-l-1}
      \sQ_n(x; -\ell-1, b, N) =  \sQ_n(x; -N-1, N-\ell+b, \ell),
\end{equation}
which are well-defined for $n =0,1,\ldots, \ell$ and orthogonal with respect to $\la \cdot,\cdot\ra_{-\ell-1,b}$. 

If instead of $a$, we have $b+1 = -\ell$ for an $\ell \in \NN$ and $\ell \le N$, then the weight function
$$
   w_{a,-\ell-1} (x) = \frac{(a+1)_x (-\ell)_{N-x}}{x!(N-x)!}
$$
is zero if $x < N-\ell$, so that the sum in the inner product is over $N-x \in \NN_\ell$. Again, the weight function is 
not positive on its domain of definition. Using the identity 
$$
  \sQ_n(x; a,b,N) = (-1)^n \frac{(b+1)_n}{(a+1)_n} \sQ_n(N-x, b,a,N),
$$
we see that the orthogonal polynomials are given by 
\begin{equation}\label{eq:b=-l-1}
   \sQ_n(x; a,b,N)  = (-1)^n \frac{(-\ell)_n}{(a+1)_n} \sQ_n(N-x, -N-1, N-\ell+a, \ell),
\end{equation}
which are well defined for $n=0,1,\ldots, \ell$. 

If both $a$ and $b$ are negative integers, we do end up with a positive weight function. Let $a = -\ell_1 -1$ and
$b = - \ell_2-1$ for $\ell_i \in \NN$ and $\ell_i \le N$. Assume further that $\ell_1 + \ell_2 \ge N$. Then the weight
function
$$
 (-1)^N w_{-\ell_1-1,-\ell_2-1}(x) = (-1)^N \frac{(-\ell_1)_x (-\ell_2)_{N-x}}{x!(N-x)!}
$$
is well defined and positive for $N-\ell_2 \le x \le \ell_1$. If $\ell_1 \le \ell_2$ then, setting $b+1 = - \ell_2$ in 
\eqref{eq:a=-l-1}, we see that 
$$
 \sQ_n(x; - N-1,N-\ell_1-\ell_2-1,\ell_1)={}_3F_2\left( \begin{matrix} -n, \,  n-\ell_1-\ell_2-1, \, -x \\ -N, \, -\ell_1 \end{matrix}; 1 \right)
$$
are orthogonal polynomials with respect to $\la \cdot ,\cdot\ra_{-\ell_1-1,-\ell_2-1}$. Since $(-1)^Nw_{-\ell_1-1,-\ell_2-1}$ is 
positive on the set
$\{x \in \NN: N-\ell_2 \le x \le \ell_1\}$ of cardinality $\ell_1+\ell_2-N+1$, we see that these orthogonal polynomials are 
well-defined for $n=0,1,\ldots, \ell_1+\ell_2-N$. If $\ell_1 \ge \ell_2$, we  set $a+1= - \ell_1$ in
\eqref{eq:b=-l-1} to conclude that the polynomials
$$
     (-1)^n \frac{(-\ell_2)_n}{(-\ell_1)_n} \sQ_n(N-x; - N-1,N-\ell_1-\ell_2-1,\ell_2)
$$
are well-defined for $n=0,1,\ldots,\ell_1+\ell_2 -N$ and orthogonal with respect to $\la \cdot ,\cdot\ra_{-\ell_1-1,-\ell_2-1}$.

\subsection{Hahn polynomials in several variables}\label{ss2.4}

We now recall  Hahn polynomials of several variables on the simplex $V_N^d$. For $\k_i > -1$, $1 \le i \le d+1$,  they are 
defined for the weight function 
\begin{equation}\label{eq:Hahn-weight}
   \sH_{\k,N} (x) =   \frac{N!}{(|\k|+d+1)_N}\,
       \prod_{i=1}^d \frac{(\k_i + 1)_{x_i}}{x_i!} \frac{(\k_{d+1}+1)_{N-|x|}}{(N-|x|)!},
\end{equation}
which corresponds to $\ell_i = - \k_i -1$ in our setting. These polynomials are given explicitly as in \eqref{eq:Hahn-simplex} 
by formally replacing $\ell_i = -\k_i -1$. Since $\k_i > -1$, its orthogonal polynomials, $\sQ_\nu$, are defined for all $|\nu| \le N$ 
and $\nu \in \NN_0^d$, and they form a basis of $\Pi_N^d$. In this case, the index set of this basis coincides with the set of 
lattice points in $V_N^d$. 

These polynomials are eigenfunctions of a second order difference operator. In \cite{IX07}, all second order difference operators 
of several polynomials that have discrete orthogonal polynomials as eigenfunctions are characterized. The characterization 
includes several distinct families of orthogonal polynomials that are closely related to the Hahn polynomials on $V_N^d$, 
which we describe below.

\subsubsection{} If we make the following substitutions
\begin{align*}
   \k_i & = -\ell_i -1, \qquad 1 \le i \le d, \\
   \k_{d+1} & = |\ell |+ r + \beta, \\
   N & = - \b -1,
\end{align*}
for $\ell_i \in \NN$, $r, \b \in \RR$, $r> 0$ and $\b>0$, then the weight function \eqref{eq:Hahn-weight} can be 
written as
$$
  \sH(x) =   \frac{(r)_{|\ell|}}{(r+\b+1)_{|\ell|}}\,
   \prod_{i=1}^d \frac{(-\ell_i)_{x_i}}{x_i!} \frac{(\b+1)_{|x|}}{(-|\ell|-r+1)_{|x|}},
$$
which is defined for the parallelepiped set 
$$
   V_\ell^d: = \{x \in \NN_0^d: x_i \le \ell_i\}.
$$ 
Furthermore, using the fact that if $ -a -1\in \NN$, then 
$$
     \sQ_n (x;a,b,N) = \sQ_n(x; -N-1,N+a+b+1, -a-1),
$$  
it is easy to verify that, up to unessential constant factors, the polynomials in \eqref{eq:Hahn-simplex} become the orthogonal 
polynomials on $V_\ell^d$ as defined in \cite[Theorem 5.3]{IX07}. 

\subsubsection{} More generally, we can consider the Hahn polynomials on the set
$$
  V_{N,S}^d = V_N^d \cap \{x: x_i \le \ell_i \,\,\hbox{for} \,\, i \in S\},
$$
where $S$ is a nonempty set $S\subset \{1,2,\ldots,d\}$, $\ell_i +1 \in \NN$ and  $\ell_i \le N$. In the case $S=\{1,2,\ldots,d\}$,
we also assume that $\ell_1 + \cdots + \ell_d > N$. The weight function is the same as $\sH_{\k, N}$ with $\k_i = - \ell_i-1$ 
for $i \in S$, which is positive only if $(-\k_i)$ are large numbers for $i$ in the complement of $S$ . This case is discussed in subsection 5.2.2 of \cite{IX07} and the corresponding orthogonal polynomials are given in \eqref{eq:Hahn-simplex} with 
$\k_i = - \ell_i-1$ for $i \in S$.

\subsubsection{}
The weight function \eqref{eq:poly-weight} on a polyhedron corresponds to setting $\k_i$ to be negative integers 
for $i =1,2, \ldots, d+1$. This case has not been studied previously and turns out to be different from all previous cases in a 
fundamental way. For instance, in all previous cases, the set of indices of orthogonal polynomials and the set of
lattice points in the domain coincide. This, however, is no longer the case when all $\k_i$ are negative integers and the 
definition of $H_{\ell,N}^d$ differs substantially from the polyhedron, see Definition \ref{def:Hln}. In fact, the structure of 
$H_{\ell,N}^d$ is so much more complicated in this case, that even the proof that $|H_{\ell,N}^d| = |V_{\ell,N}^d|$ requires a 
substantial effort. The complexity can be viewed in Figure 2 for $d =3$.

\section{Second order difference operators and integrability}\label{se3}

The difference operator and its decomposition as a sum of self-adjoint partial difference operators which preserve the ideal $\CI(V_{\ell,N}^d)$
are introduced in the first subsection.  Algebraic properties of the symmetry operators and the discrete 
extension of the generic quantum superintegrable system are studied in the second subsection. Limiting cases of the 
parameters are discussed in the third subsection. The reduction to the harmonic oscillator is presented in the fourth subsection.

\subsection{Difference operators on $\Pi_{\ell,N}^d$}\label{ss3.1}
We denote by $\{e_1,e_2,\dots,e_d\}$ the standard basis for $\RR^d$, and by $E_i$ and $E_i^{-1}$ the shift operators 
acting on a function $f(x)$  as follows
\begin{align*}
E_if(x)=f(x+e_i) \quad \hbox{and}\quad E_i^{-1}f(x)=f(x-e_i).
\end{align*}
We use also the forward and the backward difference operators defined by
\begin{align*}
&\fsh_i f(x)=f(x+e_i)-f(x)=(E_i-1)f(x),\\
&\bsh_i f(x)=f(x)-f(x-e_i)=(1-E_i^{-1})f(x).
\end{align*}
The operator
\begin{equation}\label{eq:operator1}
\CL=\sum_{1\leq i\neq j\leq d}\alpha_{i,j}(E_iE_j^{-1}-1)+\sum_{i=1}^d\beta_i(E_i-1)+ \sum_{i=1}^d\gamma_i(E_i^{-1}-1),
\end{equation}
where
\begin{align*}
\alpha_{i,j}&\,=x_j(x_i-\ell_i),\\
\beta_{i}&\,=(x_i-\ell_i)(N-|x|),\\
\gamma_{i}&\,=x_i(N-|x|-\ell_{d+1}),
\end{align*}
coincides with operator for the Hahn polynomials on the discrete simplex, upon setting $\ell_i = -\k_i-1$, see \cite{IX07}.

We can introduce a new coordinate
$$x_{d+1}=N-|x|.$$ 
Then, if we think of $\cL$ as an operator acting on functions of $x_1,\dots,x_{d},x_{d+1}$, we need to interpret $E_{i}$ as 
$E_iE_{d+1}^{-1}$ and $E_{i}^{-1}$ as $E_{d+1} E_i^{-1}$. Therefore, the operator $\cL$ can be written in the more 
symmetric form:
\begin{equation}\label{eq:operator2}
   \cL=\sum_{1\leq i\neq j\leq d+1}x_j(x_i-\ell_i)(E_iE_j^{-1}-1). 
\end{equation}
\begin{defn}\label{de3.1}
For $i\neq j\in\{1,\dots,d+1\}$ define
\begin{equation}\label{eq:operatorij}
\cL_{i,j}=x_j(x_i-\ell_i)(E_iE_j^{-1}-1)+x_i(x_j-\ell_j)(E_jE_i^{-1}-1).
\end{equation}
\end{defn}
With the notation above, and using \eqref{eq:operator2}, we see that the operator $\cL$ can be decomposed as follows
\begin{equation}\label{eq:operator3}
\cL=\sum_{1\leq i< j\leq d+1}\cL_{i,j}.
\end{equation}

\begin{prop}\label{pr3.2} 
For $i\neq j\in\{1,\dots,d+1\}$ and $n\in\NN_0$ the following statements hold:
\begin{enumerate}[\rm (i)]
\item The ideal $\CI(V_{\ell,N}^d)$ is invariant under the action of the operator $\cL_{i,j}$; that is, 
$\CL_{i,j} (\CI(V_{\ell,N}^d)) \subset \CI(V_{\ell,N}^d). $
\item  $\cL_{i,j}(\Pi^d_{n})\subset \Pi^d_{n}$, i.e. $\cL_{i,j}:\Pi^d_{n}\to  \Pi^d_{n}$.
\end{enumerate}
\end{prop}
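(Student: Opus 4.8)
The plan is to prove the two assertions separately, in both cases exploiting the symmetric form \eqref{eq:operator2}--\eqref{eq:operatorij} together with the description of $V_{\ell,N}^d$, in the auxiliary coordinate $x_{d+1}=N-|x|$, as the symmetric slice $\{x\in\NN_0^{d+1}:|x|=N,\ 0\le x_k\le\ell_k,\ 1\le k\le d+1\}$, where now $|x|=x_1+\cdots+x_{d+1}$. In these coordinates $\cL_{i,j}$ is manifestly symmetric under the interchange $i\leftrightarrow j$, and this symmetry drives both parts: in (i) it pairs each shift with a coefficient that vanishes exactly where the shift leaves $V_{\ell,N}^d$, and in (ii) it forces the would-be top-degree term to cancel against its mirror image.

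For part (i), I would fix $q\in\CI(V_{\ell,N}^d)$ and a lattice point $x\in V_{\ell,N}^d$, viewed in $\NN_0^{d+1}$ via $x_{d+1}=N-|x|$. Since $q(x)=0$, the two $-1$ summands in \eqref{eq:operatorij} drop out, leaving
\[
\cL_{i,j}q(x)=x_j(x_i-\ell_i)\,q(\tau_{ij}x)+x_i(x_j-\ell_j)\,q(\tau_{ji}x),
\]
where $\tau_{ij}x=x+e_i-e_j$ (with $e_1,\dots,e_{d+1}$ the standard basis of $\RR^{d+1}$) transfers one unit from the $j$-th to the $i$-th coordinate and hence preserves $|x|=N$. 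Only the $i$-th and $j$-th coordinates change, by $+1$ and $-1$, so $\tau_{ij}x$ again lies in $V_{\ell,N}^d$ unless $x_i=\ell_i$ (the raised coordinate would exceed its bound) or $x_j=0$ (the lowered coordinate would go negative); but $\{x_i=\ell_i\}\cup\{x_j=0\}$ is exactly the vanishing locus of the coefficient $x_j(x_i-\ell_i)$. Thus the first summand is zero — either $\tau_{ij}x\in V_{\ell,N}^d$ and $q$ vanishes there, or the coefficient kills it — and the $i\leftrightarrow j$ symmetric term vanishes for the same reason. Therefore the polynomial $\cL_{i,j}q$ vanishes on all of $V_{\ell,N}^d$ and so lies in $\CI(V_{\ell,N}^d)$.

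For part (ii) the concern is that multiplying by the degree-two coefficients might raise the degree by one, so I would exhibit the cancellation of the top homogeneous component. Let $p\in\Pi_n^d$ have leading form $p_n$. When $i,j\le d$, the operator $E_iE_j^{-1}-1$ acts to leading order as $\partial_i-\partial_j$, so the degree-$(n-1)$ part of $(E_iE_j^{-1}-1)p$ is $(\partial_i-\partial_j)p_n$; multiplying by the degree-two part $x_ix_j$ of $x_j(x_i-\ell_i)$ gives $x_ix_j(\partial_i-\partial_j)p_n$ as the degree-$(n+1)$ component of the first summand, while the $i\leftrightarrow j$ symmetry produces $x_ix_j(\partial_j-\partial_i)p_n$ for the second, and the two cancel. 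When $j=d+1$ I substitute $x_{d+1}=N-|x|$, with leading form $-|x|$, so that $\cL_{i,d+1}=x_{d+1}(x_i-\ell_i)(E_i-1)+x_i(x_{d+1}-\ell_{d+1})(E_i^{-1}-1)$; here the leading differentials of $E_i-1$ and $E_i^{-1}-1$ are $\partial_i$ and $-\partial_i$ and both summands carry the common leading coefficient $-x_i|x|$, so their degree-$(n+1)$ parts are $-x_i|x|\,\partial_i p_n$ and $+x_i|x|\,\partial_i p_n$ and again cancel. Hence $\cL_{i,j}p\in\Pi_n^d$ in every case.

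The only point demanding care is the index $d+1$, where the shifts must be read through $x_{d+1}=N-|x|$ and the constraint $N-\ell_{d+1}\le|x|$ resurfaces as $x_{d+1}\le\ell_{d+1}$. The symmetric slice description absorbs this, so (i) requires no separate case and (ii) is a routine recomputation. I do not expect any conceptual obstacle beyond the recognition that the coefficients $x_j(x_i-\ell_i)$ are tailored to vanish precisely on the faces of the polyhedron across which the corresponding unit-transfer shift exits $V_{\ell,N}^d$; this is the observation that makes part (i) essentially immediate.
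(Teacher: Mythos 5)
Your proposal is correct and takes essentially the same approach as the paper: part (i) is the paper's argument (the coefficient $x_j(x_i-\ell_i)$ vanishes exactly where the shift $x\mapsto x+e_i-e_j$ exits $V_{\ell,N}^d$), and part (ii) establishes the same cancellation of the would-be top-degree term by $i\leftrightarrow j$ antisymmetry, merely phrased via leading forms and first-order Taylor terms instead of the paper's explicit monomial expansion.
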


\begin{proof}
(i) The statement follows easily from the definition of $\CI(V_{\ell,N}^d)$. Indeed, if $f(x)$ vanishes on $V_{\ell,N}^d$ and
 if $x\in V_{\ell,N}^d$, then $f(x+e_i-e_j)$ automatically vanishes when $x+e_i-e_j\in V_{\ell,N}^d$, while for 
 $x+e_i-e_j\not\in V_{\ell,N}$, the coefficient $x_j(x_i-l_i)$ is zero.

(ii) Since 
$$
(x_i+1)^{\nu_i}(x_j-1)^{\nu_j}=x_i^{\nu_i}x_j^{\nu_j}+\nu_ix_i^{\nu_i-1}x_j^{\nu_j}-\nu_jx_i^{\nu_i}x_j^{\nu_j-1}
\mod \Pi^d_{\nu_i+\nu_j-2},
$$
it follows that
$$
x_j(x_i-l_i)(E_iE_j^{-1}-1)x_i^{\nu_i}x_j^{\nu_j}=\nu_ix^{\nu_i}x_j^{\nu_j+1}-\nu_jx^{\nu_i+1}x_j^{\nu_j}\mod \Pi^d_{\nu_i+\nu_j}.
$$
If $j=d+1$, we consider the above formulas for $\nu_{j+1}=0$. Clearly, the right-hand side is skew-symmetric in $i$ and $j$ 
and therefore $\cL_{i,j}(x_i^{\nu_i}x_j^{\nu_j})\in \Pi^d_{\nu_i+\nu_j}$.
\end{proof}

\begin{prop}\label{pr3.3}
Let $i\neq j\in\{1,\dots,d+1\}$. Then:
\begin{enumerate}[\rm (i)]
\item The space $\Pi_{\ell,N}^d$ is invariant under the action of the operator $\cL_{i,j}$.
\item The operator $\cL_{i,j}$ is self-adjoint with respect to the inner product \eqref{eq:Hahn-ipd}.
\end{enumerate}
\end{prop}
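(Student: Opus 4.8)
The plan is to obtain (i) directly from Proposition~\ref{pr3.2} and to prove (ii) by discrete summation by parts resting on a Pearson-type (detailed-balance) identity for the weight.

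For (i), recall that $\Pi_{\ell,N}^d$ is by definition the quotient $\RR[x_1,\dots,x_d]/\CI(V_{\ell,N}^d)$. Proposition~\ref{pr3.2}(i) states that $\cL_{i,j}$ carries $\CI(V_{\ell,N}^d)$ into itself, so $\cL_{i,j}$ descends to a well-defined linear map on the quotient via $\cL_{i,j}[p]:=[\cL_{i,j}p]$, which is exactly the invariance asserted. If one prefers the concrete realization furnished by the decomposition \eqref{eq:OPspace}, one combines Proposition~\ref{pr3.2}(ii) at $n=N$ (giving $\cL_{i,j}(\Pi_N^d)\subset\Pi_N^d$) with Proposition~\ref{pr3.2}(i) (giving $\cL_{i,j}(\CI(V_{\ell,N}^d)\cap\Pi_N^d)\subset\CI(V_{\ell,N}^d)\cap\Pi_N^d$) and projects along \eqref{eq:OPspace}; either way no new computation is required.

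For (ii), the first step is to extend $\sH_{\ell,N}$ of \eqref{eq:poly-weight} to all of $\NN_0^d$ — equivalently, to the hyperplane lattice $\{x:|x|=N\}$ in the symmetric variables $(x_1,\dots,x_{d+1})$, $x_{d+1}=N-|x|$, in which \eqref{eq:operator2} and \eqref{eq:operatorij} are completely symmetric in the indices — using the convention $1/m!=0$ for negative integers $m$. The zero pattern of the Pochhammer symbols $(-\ell_k)_{x_k}$ and of the factorials then forces $\sH_{\ell,N}$ to vanish identically off $V_{\ell,N}^d$, so that the sum in \eqref{eq:Hahn-ipd} may be taken over the whole lattice with only finitely many nonzero terms and no boundary contributions. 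Writing $v=e_i-e_j$, $a(x)=x_j(x_i-\ell_i)$ and $b(x)=x_i(x_j-\ell_j)$, so that $\cL_{i,j}=a(x)(E_iE_j^{-1}-1)+b(x)(E_jE_i^{-1}-1)$, the key step is the detailed-balance identity
\begin{equation*}
  a(x)\,\sH_{\ell,N}(x)=b(x+v)\,\sH_{\ell,N}(x+v)\qquad\text{for all lattice }x.
\end{equation*}
After cancelling the factors untouched by the shift $v$, this reduces to the two elementary recurrences $(x_i+1)\phi_i(x_i+1)=(x_i-\ell_i)\phi_i(x_i)$ and $x_j\phi_j(x_j)=(x_j-\ell_j-1)\phi_j(x_j-1)$ for the one-variable factors $\phi_k(m)=(-\ell_k)_m/m!$, each of which holds at every integer, both sides vanishing at the boundary values $m=\ell_i$ and $m=0$.

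With the detailed-balance identity in hand, self-adjointness is routine: in each of the two sums making up $\la\cL_{i,j}f,g\ra_{\ell,N}$ one shifts the summation index ($y=x+v$ or $y=x-v$) and substitutes for $a(x)\sH_{\ell,N}(x)$, resp.\ $b(x)\sH_{\ell,N}(x)$, by detailed balance; the result is manifestly symmetric in $f$ and $g$ and equals $\la f,\cL_{i,j}g\ra_{\ell,N}$. Since the form \eqref{eq:Hahn-ipd} depends only on values on $V_{\ell,N}^d$, hence only on classes modulo $\CI(V_{\ell,N}^d)$, this identity passes to $\Pi_{\ell,N}^d$. I expect the only genuine obstacle to be checking that the detailed-balance identity holds at every lattice point, including those where $\sH_{\ell,N}$ vanishes; this is precisely where the convention $1/m!=0$ and the zero structure of the Pochhammer symbols are used, and it is what guarantees the absence of boundary terms in the summation by parts.
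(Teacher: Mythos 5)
Your proposal is correct. Part (i) is argued exactly as in the paper: the authors likewise dispose of it in one line by appealing to Proposition~\ref{pr3.2}, and your observation that $\cL_{i,j}$ descends to the quotient $\RR[x_1,\dots,x_d]/\CI(V_{\ell,N}^d)$ because it preserves the ideal is precisely the intended argument. For part (ii) the paper does not give an argument at all: it simply cites Proposition 2.2 of \cite{IX07}, a general self-adjointness criterion for difference operators with respect to a discrete weight. What you have done is supply, from scratch, the mathematics that lies behind that citation: the Pearson-type (detailed-balance) identity $a(x)\sH_{\ell,N}(x)=b(x+v)\sH_{\ell,N}(x+v)$ with $v=e_i-e_j$, verified at \emph{every} lattice point of the hyperplane $\{|x|=N\}$ in the symmetric variables, followed by summation by parts with no boundary terms. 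Your handling of the delicate points is sound: the identity reduces to the one-variable recurrences for $\phi_k(m)=(-\ell_k)_m/m!$, and these hold even at the degenerate values ($m=\ell_i$, $m=0$, negative $m$ under the convention $1/m!=0$), which is exactly what makes the weight vanish identically off $V_{\ell,N}^d$ and legitimizes extending the sum in \eqref{eq:Hahn-ipd} to the full lattice. The trade-off is the expected one: the paper's proof is shorter but opaque without the external reference, while yours is self-contained and makes visible the structural reason for self-adjointness, namely that the coefficients of $\cL_{i,j}$ are in detailed balance with the weight --- the same mechanism that, in the continuous limit, makes the Jacobi operator on the simplex symmetric.
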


\begin{proof}
The invariance of  $\Pi_{\ell,N}^d$ follows immediately from Proposition~\ref{pr3.2}. The self-adjointess of  $\cL_{i,j}$ can be 
deduced from Proposition 2.2 in \cite{IX07}.
\end{proof}

\subsection{Algebraic properties of the operators $\cL_{i,j}$ and superintegrability}\label{ss3.2}

Using the defining relation \eqref{eq:operatorij}, one can establish the following proposition by a straightforward computation.

\begin{prop}\label{pr3.4}
The operators $\cL_{i,j}$ satisfy the following commutation relations
\begin{subequations}\label{3.5}
\begin{align}
[\cL_{i,j},\cL_{k,l}]&=0, \text{ if }i,j,k,l \text{ are distinct,}\label{3.5a}\\
[\cL_{i,j},\cL_{i,k}+\cL_{j,k}]&=0, \text{ if }i,j,k \text{ are distinct.} \label{3.5b}
\end{align}
\end{subequations}
\end{prop}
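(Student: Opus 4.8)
The plan is to realize each $\cL_{i,j}$ as a ``split Casimir'' assembled from an $\mathfrak{sl}_2$-triple attached to each coordinate; the relations \eqref{3.5} then follow from the ad-invariance of the associated Casimir element, which is precisely the mechanism producing representations of the Kohno--Drinfeld Lie algebra.

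Concretely, for each index $a\in\{1,\dots,d+1\}$ I would introduce the one-variable operators
\[
  P_a = (x_a-\ell_a)E_a, \qquad Q_a = x_a E_a^{-1}, \qquad H_a = 2x_a-\ell_a ,
\]
each acting only through $x_a$ and $E_a^{\pm1}$. A one-line computation gives $P_aQ_a=(x_a-\ell_a)(x_a+1)$ and $Q_aP_a=x_a(x_a-\ell_a-1)$ as multiplication operators, whence $[P_a,Q_a]=H_a$, $[H_a,P_a]=-2P_a$, $[H_a,Q_a]=2Q_a$, so that $\{P_a,Q_a,H_a\}$ spans a copy of $\mathfrak{sl}_2$; operators with different site labels commute. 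Since $x_j(x_i-\ell_i)E_iE_j^{-1}=P_iQ_j$, collecting the diagonal terms and using $x_j(x_i-\ell_i)+x_i(x_j-\ell_j)=\tfrac12 H_iH_j-\tfrac12\ell_i\ell_j$ yields
\[
  \cL_{i,j}=P_iQ_j+P_jQ_i-\tfrac12 H_iH_j+\tfrac12\ell_i\ell_j .
\]
Up to the central scalar $\tfrac12\ell_i\ell_j$, this is exactly $\sum_{\mu,\nu}g^{\mu\nu}t^\mu_i t^\nu_j$ for the ad-invariant symmetric tensor $g$ coming from the invariant form on $\mathfrak{sl}_2$, with $t^\mu\in\{P,Q,H\}$.

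Granting this factorization, both relations are formal. For \eqref{3.5a} with $i,j,k,l$ distinct, every factor of $\cL_{i,j}$ commutes with every factor of $\cL_{k,l}$ because they carry disjoint site labels, so the commutator vanishes. For \eqref{3.5b} I would first check that the single-site Casimir $\Omega=P_aQ_a+Q_aP_a-\tfrac12 H_a^2$ is central, i.e. $[\Omega,P_a]=[\Omega,Q_a]=[\Omega,H_a]=0$; equivalently $g$ is ad-invariant. Expanding $[\cL_{i,j},\cL_{i,k}]+[\cL_{i,j},\cL_{j,k}]$ and discarding all cross-site commutators collapses each bracket to a single-site $\mathfrak{sl}_2$ bracket weighted by structure constants, acting at sites $i$ and $j$ against a common factor at site $k$; their sum is exactly the ad-invariance identity and so vanishes.

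The main obstacle is establishing the factorization with the correct diagonal part --- recognizing that the multiplication term $x_j(x_i-\ell_i)+x_i(x_j-\ell_j)$ is $\tfrac12 H_iH_j$ up to a scalar, so that no stray first-order remainder survives to spoil the $\mathfrak{sl}_2$ picture. After that, centrality of $\Omega$ is a short direct check and the Kohno--Drinfeld relations are automatic. A purely computational route is also available: expand both commutators in the shifts $E_a^{\pm1}$ and track cancellations, noting that only the variables $x_i,x_j,x_k$ and parameters $\ell_i,\ell_j,\ell_k$ occur, so that \eqref{3.5b} reduces to a three-variable identity; this avoids the structural observation but is considerably more tedious.
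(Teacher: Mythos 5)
Your proposal is correct, but it takes a genuinely different route from the paper's. The paper proves Proposition~\ref{pr3.4} by direct computation: expand both commutators from the defining relation \eqref{eq:operatorij} and verify that all terms cancel (this is the ``straightforward computation'' the text alludes to). You instead exhibit the structure that forces the relations. Your computations check out: indeed $P_aQ_a=(x_a-\ell_a)(x_a+1)$ and $Q_aP_a=x_a(x_a-\ell_a-1)$, so $[P_a,Q_a]=H_a$, $[H_a,P_a]=-2P_a$, $[H_a,Q_a]=2Q_a$; moreover $x_j(x_i-\ell_i)E_iE_j^{-1}=P_iQ_j$ and $x_j(x_i-\ell_i)+x_i(x_j-\ell_j)=\tfrac12 H_iH_j-\tfrac12\ell_i\ell_j$, so the factorization $\cL_{i,j}=P_iQ_j+P_jQ_i-\tfrac12 H_iH_j+\tfrac12\ell_i\ell_j$ holds and $-\cL_{i,j}$ is, up to an additive scalar, the split Casimir of the invariant trace form of $\mathfrak{sl}_2$ placed at sites $i,j$. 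Scalars drop out of commutators, so \eqref{3.5a} follows from disjointness of sites, and \eqref{3.5b} is the classical infinitesimal-braid argument: write $\cL_{i,k}+\cL_{j,k}=\sum_\mu (t^\mu_i+t^\mu_j)(t_\mu)_k$ plus a constant, kill the site-$k$ factors by disjointness, and kill $[\cL_{i,j},\,t^\mu_i+t^\mu_j]$ by invariance of the Casimir tensor. Two points should be tightened when writing this up. First, the fact you actually need is ad-invariance of the two-tensor $\sum_\mu t^\mu\otimes t_\mu$, i.e. $[\cL_{i,j},\,x_i+x_j]=0$ for $x\in\{P,Q,H\}$; centrality of the one-site Casimir in $U(\mathfrak{sl}_2)$ is a consequence of this, not an equivalent statement, so the finite check should be done in the two-site (tensor) form. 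Second, the relations should be read as identities in the algebra of difference operators in the $d+1$ independent variables of the symmetric form \eqref{eq:operator2}; since each $\cL_{i,j}$ preserves the hyperplane $x_1+\cdots+x_{d+1}=N$, they restrict to the setting of the proposition, which is also how the paper itself interprets Definition~\ref{de3.1}. What your route buys is conceptual clarity: the representation of the Kohno--Drinfeld Lie algebra, which the paper records as a remark immediately after the proposition, becomes the very mechanism of the proof, and the same factorization explains the Gaudin subalgebras appearing later; what the paper's route buys is a short, entirely self-contained verification requiring no $\mathfrak{sl}_2$ apparatus.
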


Recall that the Kohno-Drinfeld Lie algebra is the quotient of the free Lie algebra on generators $\cL_{i,j}$, by the ideal 
generated by the relations in \eqref{3.5}, see \cite{AFV}. Therefore, the operators $\cL_{i,j}$ in \eqref{eq:operatorij} define 
a representation of the Kohno-Drinfeld Lie algebra. Equations \eqref{eq:operator3} and \eqref{3.5} imply that 
\begin{equation}\label{3.6}
[\cL,\cL_{i,j}]=0\qquad\text{ for all }\qquad i\neq j\in \{1,2,\dots,d+1\}.
\end{equation}
If we think of $\cL$ as a quantum Hamiltonian, then the operators $\cL_{i,j}$ represent symmetries, or integrals of 
motion for $\cL$. We denote by $\fAd$ the algebra generated by the operators $\cL_{i,j}$ and we will refer to it as the 
{\em symmetry algebra} for $\cL$.

The system above can be considered as an extension of the generic quantum superintegrable system on the sphere 
$\SS^{d}$ with Hamiltonian given in \eqref{1.6}. 
As we discussed in the introduction, the system on the sphere has been extensively studied in the literature. To see the connection with the 
operator $\cL$ in \eqref{eq:operator1}, we introduce new variables
\begin{equation}\label{3.7}
x_k=Ny_k, \quad \text{ for }k=1,\dots,d.
\end{equation}
If we let $N\to\infty$, the operator $\cL$ becomes the Jacobi operator on the simplex:
\begin{equation}\label{3.8}
\hat{\cL}:=\sum_{k=1}^{d}y_k(1-y_k)\frac{\partial^2}{\partial y_k^2} 
 -2\sum_{1\leq k<j\leq d} y_ky_j\frac{\partial^2}{\partial y_k \partial y_j}+\sum_{k=1}^{d}(|\ell| y_k-\ell_k)\frac{\partial }{\partial y_k},
\end{equation}
see for instance \cite[Section~5.3]{GI}. By changing the variables $y_k=z_k^2$ and by applying an appropriate gauge 
transformation, ignoring unessential constant terms and factors, the operator $\hat{\cL}$ can be transformed into the 
Hamiltonian $\mathcal{H}$ in \eqref{1.6} where $b_i=\frac{1}{4}-(\ell_i+1)^2$, see \cite[Section 2]{I17}.

The operator $\hat{\cL}$ was obtained at the beginning of the last century in the monograph \cite{AK} in the case $d = 2$, 
from the differential equations satisfied by the Lauricella functions. Its connection to the superintegrable system on the sphere 
in arbitrary dimension and link to Jacobi polynomials on the simplex was revealed in \cite{KMT}.  

Let $S_{d+1}$ denote the symmetric group consisting of all permutations of $d+1$ symbols. From \eqref{3.5} it is easy to see 
that for every permutation $\sigma\in S_{d+1}$, the Jucys-Murphy elements, defined by  
$$
\cL_{\sigma_1,\sigma_2}, \cL_{\sigma_1,\sigma_3}+\cL_{\sigma_2,\sigma_3}, \cL_{\sigma_1,\sigma_4}
+\cL_{\sigma_2,\sigma_4}+ \cL_{\sigma_3,\sigma_4},\dots, \sum_{j=1}^{d}\cL_{\sigma_j,\sigma_{d+1}},
$$ 
commute with each other and generate a {\em Gaudin subalgebra} of $\fAd$, see \cite{AFV,Fr}. We now focus on the 
permutation $\sigma_0:(1,2,\dots,d+1)\to (d+1,d,\ldots, 2, 1)$ and consider the Gaudin subalgebra $\fGd$ of $\fAd$ generated by 
\begin{equation}\label{3.9}
\cL_{d,d+1}, \cL_{d-1,d}+\cL_{d-1,d+1}, \cL_{d-2,d-1}+\cL_{d-2,d}+\cL_{d-2,d+1},\dots,\sum_{j=2}^{d+1}\cL_{1,j}.
\end{equation}
We shall show that the subalgebra $\fGd$ provides a spectral characterization of the basis 
$\{\sQ_\nu(x; \ell,N):\nu\in H^{d}_{\ell,N}\}$. For this purpose, we introduce the operators
\begin{equation}\label{3.10}
    \cM_{j}:=\sum_{j\leq k< l \leq d+1}\cL_{k,l }, \qquad j=1,2,\dots,d,
\end{equation}
which represent the partial sums of the operators in \eqref{3.9}. In particular, $\cM_{1}=\cL$ and
$$
  \fGd=\RR\langle\cM_{1},\dots,\cM_{d}\rangle.
$$ 

\begin{prop}\label{pr3.5}
The polynomials $\sQ_\nu(\cdot; \ell,N)$ are eigenfunctions of the operators in $\fGd$ and satisfy the spectral equations
\begin{equation}\label{3.11}
\cM_{k}\sQ_\nu(x; \ell,N) =|\bnu^{k}|(|\bell^{k}|-|\bnu^{k}|+1)\sQ_\nu(x; \ell,N), \qquad k=1,\dots,d.
\end{equation}
\end{prop}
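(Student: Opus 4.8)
The plan is to exploit the nested structure shared by the operators $\cM_k$ and the polynomials $\sQ_\nu(\cdot;\ell,N)$, reducing each spectral equation in \eqref{3.11} to the eigenvalue equation for the \emph{full} Hahn operator $\cL$ of a lower-dimensional system. The starting observation is that $\cM_k=\sum_{k\le i<j\le d+1}\cL_{i,j}$ involves only the shifts $E_k,\dots,E_{d+1}$, hence leaves $x_1,\dots,x_{k-1}$ fixed. Freezing these first $k-1$ coordinates and writing $M:=N-|\xb_{k-1}|$, I would check that on the resulting slice the auxiliary coordinate $x_{d+1}=N-|x|$ equals $M-(x_k+\cdots+x_d)$; comparing term by term with \eqref{eq:operator2}--\eqref{eq:operator3}, this shows that $\cM_k$ restricts to the \emph{complete} operator $\cL$ of the Hahn system in the $d-k+1$ variables $(x_k,\dots,x_d)$ with total degree $M$ and parameter vector $(\ell_k,\dots,\ell_{d+1})$. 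Note that this subsystem is again admissible in the sense of \eqref{eq:l-condition}, since $\ell_i+\ell_j\ge N\ge M$.

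Second, I would show that the restriction of $\sQ_\nu(\cdot;\ell,N)$ to the same slice is a slice-constant times the Hahn polynomial of the subsystem. Reading off \eqref{eq:Hahn-simplex}, the global constant and the factors with $j\le k-1$ depend only on $x_1,\dots,x_{k-1}$ and are therefore constant along the slice, whereas for $j\ge k$ the total-degree argument rewrites as $N-|\xb_{j-1}|-|\bnu^{j+1}|=M-(x_k+\cdots+x_{j-1})-|\bnu^{j+1}|$, and the parameter $a_j$ is unchanged because $|\bell^{j+1}|$ and $|\bnu^{j+1}|$ are intrinsic to the tails $(\ell_j,\dots,\ell_{d+1})$ and $(\nu_j,\dots,\nu_d)$. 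Matching these against \eqref{eq:Hahn-simplex} written for the subsystem identifies $\prod_{j\ge k}$ with the $(d-k+1)$-variable Hahn polynomial indexed by $(\nu_k,\dots,\nu_d)$ of total degree $M$. Thus on the slice $\sQ_\nu(\cdot;\ell,N)=C(x_1,\dots,x_{k-1})\,\sQ_{(\nu_k,\dots,\nu_d)}^{\mathrm{sub}}$, with $C$ constant in the slice variables.

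Combining the two reductions, applying $\cM_k$ and cancelling $C$ reduces the claim to the single eigenvalue equation for the full Hahn operator $\cL$ of the subsystem. Here I would invoke the known spectrum of $\cL$ for the Hahn polynomials on the simplex from \cite{IX07}: the $D$-variable operator acts on $\sQ_\mu$ with eigenvalue $|\mu|(|\ell'|-|\mu|+1)$. Since this equation is a rational identity in the parameters $\ell_i$ that holds on the open parameter range of \cite{IX07}, it persists for $\ell_i\in\NN$; applied to the subsystem it yields precisely $|\bnu^{k}|(|\bell^{k}|-|\bnu^{k}|+1)$. As this eigenvalue is independent of the frozen coordinates $x_1,\dots,x_{k-1}$, the equation holds on every slice, hence at every point of $V_{\ell,N}^d$, i.e.\ as an identity in $\Pi_{\ell,N}^d$; Propositions~\ref{pr3.2}--\ref{pr3.3} ensure everything is well defined modulo $\CI(V_{\ell,N}^d)$. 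The case $k=1$ is just $\cM_1=\cL$ with $M=N$, recovering the full-operator eigenvalue directly.

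The main obstacle I anticipate lies in the second step: verifying that the total-degree arguments \emph{and} the parameters $a_j$ appearing in the nested product \eqref{eq:Hahn-simplex} coincide exactly with those of the subsystem's Hahn polynomial, and confirming in the first step that the restriction of $\cM_k$ is the \emph{entire} operator $\cL$ of the subsystem rather than a proper partial sum. Both hinge on the identity $x_{d+1}=N-|x|=M-(x_k+\cdots+x_d)$ together with the decomposition \eqref{eq:operator3}. Once these two structural matchings are established, the eigenvalue itself requires no computation beyond reading off the known Hahn spectrum.
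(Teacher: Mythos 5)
Your proposal is correct and takes essentially the same approach as the paper's proof: both reduce $\cM_k$ on each slice to the full Hahn operator of the $(d+1-k)$-dimensional simplex with $N$ replaced by $N-|\xb_{k-1}|$, identify the factors $j\ge k$ in \eqref{eq:Hahn-simplex} as the subsystem's Hahn polynomial (the $j<k$ factors and the normalization being constant on the slice), and then invoke the eigenvalue equation from \cite{IX07} under the substitution $\ell_j=-\k_j-1$. Your extra remark that the spectral identity persists from the open parameter range to negative integer $\ell_j$ merely makes explicit what the paper leaves implicit.
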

\begin{proof}
When $k=1$, the statement follows from the difference equation satisfied by the Hahn polynomials by setting $\ell_j=-\k_j-1$, 
see \cite[Section 5]{IX07} (where $\s$ is used in place of $\k$). For arbitrary $k\in\{1,\dots,d\}$, note that the operator $\cM_{k}$ 
acts only on the variables $x_{k},\dots,x_{d}$ and coincides with the Hahn operator on the $d+1-k$ dimensional simplex, 
with $N$ replaced by $\Nt=N-|\xb_{k-1}|$. Therefore, we can ignore the first $k-1$ terms in the product \eqref{eq:Hahn-simplex}. 
The proof now follows from the fact that, up to a factor independent of $x_{k},\dots,x_{d}$, the product for $j=k,\dots,d$ in 
\eqref{eq:Hahn-simplex} defines again a Hahn polynomial in the variables $x_{k},\dots,x_{d}$ with indices $\nu_{k},\dots,\nu_{d}$ 
and parameters $\ell_k,\dots,\ell_{d+1}$, $\Nt$.
\end{proof}

The next lemma describes an important nonlinear relation satisfied by the operators $\cL_{i,j}$. It will be the key ingredient in the
construction of smaller sets of algebraic generators of $\fAd$ whose cardinality depend linearly on the dimension $d$. The statement
represents an extension of an analogous result discovered in \cite{I17} 
in the Jacobi case and can be proved by a direct computation.

\begin{lem}\label{le3.6}
Let $\{A,B\}=AB+BA$ denote the anticommutator of the operators $A$ and $B$.
If $i,j,k,m$ are distinct indices, then
\begin{equation}\label{3.12}
\begin{split}
&\ell_k(2+\ell_k)\ell_m(2+\ell_m)\cL_{i,j}=
\left \{ [\cL_{j,k},\cL_{k,m}],[\cL_{i,k},\cL_{k,m}] \right\}-\left\{\cL_{k,m},[\cL_{i,k},[\cL_{j,k},\cL_{k,m}]] \right\}\\
&\qquad-2\left\{\cL_{k,m},\cL_{i,k}\cL_{j,m}\right\}+\ell_k\ell_m\left[\cL_{i,k},[\cL_{k,m},\cL_{j,m}]\right]\\
&\qquad+\ell_j\ell_m\{\cL_{i,k},\cL_{k,m}\}-\ell_m(\ell_m + 2)\left\{\cL_{i,k},\cL_{j,k}\right\}-\ell_k(\ell_k + 2)
\left\{\cL_{i,m},\cL_{j,m}\right\}\\
&\qquad+\ell_i\ell_k \left\{\cL_{j,m},\cL_{k,m}\right\}-4\cL_{j,k}\cL_{i,m}+2(-2+\ell_k\ell_m)\cL_{i,k}\cL_{j,m}\\
&\qquad +2\ell_j(1+\ell_k)\ell_m\cL_{i,k}+\ell_j\ell_k(2+2\ell_m+\ell_k\ell_m)\cL_{i,m}\\
&\qquad+\ell_i\ell_m(2+2\ell_k+\ell_k\ell_m)\cL_{j,k}+2\ell_i\ell_k(1+\ell_m)\cL_{j,m}-\ell_i\ell_j\ell_k\ell_m\cL_{k,m}.
\end{split}
\end{equation}
\end{lem}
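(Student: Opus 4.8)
The plan is to read \eqref{3.12} as an identity in the associative algebra $\mathcal{D}$ of partial difference operators acting on functions of $x_1,\dots,x_{d+1}$, generated by the multiplication operators $x_1,\dots,x_{d+1}$ together with the balanced shifts $E_aE_b^{-1}$. Because every factor $\cL_{a,b}$ occurring in \eqref{3.12} has both of its indices in $\{i,j,k,m\}$, no variable $x_s$ with $s\notin\{i,j,k,m\}$ is ever shifted or multiplied, so both sides lie in the subalgebra generated by $x_i,x_j,x_k,x_m$ and the four-index shifts. By \eqref{eq:operatorij}, each $\cL_{a,b}$ is of shift order one: a polynomial combination of the identity and the two shifts $E_aE_b^{-1}$, $E_bE_a^{-1}$. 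The left-hand side of \eqref{3.12} is $\ell_k(2+\ell_k)\ell_m(2+\ell_m)\cL_{i,j}$, which moves only the pair $(x_i,x_j)$ and is again of shift order one. Proving the lemma thus amounts to checking that the difference of the two sides is the zero element of $\mathcal{D}$.

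The first step is to introduce a normal form: every element of $\mathcal{D}$ can be written uniquely as $\sum_{S} p_S(x)\,S$, where $S$ runs over the commutative monoid generated by the $E_aE_b^{-1}$ and each $p_S$ is a polynomial in $x_i,x_j,x_k,x_m$; two operators coincide exactly when all coefficient polynomials $p_S$ agree. To reach this form I would repeatedly use that distinct shifts commute, $E_aE_b^{-1}\,E_cE_d^{-1}=E_cE_d^{-1}\,E_aE_b^{-1}$, together with the rules for moving a shift past a multiplication operator, namely $(E_aE_b^{-1})\,x_a=(x_a+1)\,E_aE_b^{-1}$ and $(E_aE_b^{-1})\,x_b=(x_b-1)\,E_aE_b^{-1}$. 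Expanding each nested commutator and anticommutator on the right-hand side of \eqref{3.12} in this way and collecting reduces the statement to a finite list of polynomial identities, one for each shift monomial $S$ that can occur.

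I would organize the verification by shift order. Since products of up to four operators $\cL_{a,b}$ appear, the right-hand side a priori contains shift monomials of order up to four, and in particular monomials that displace $x_k$ and $x_m$; for instance the product $\cL_{i,k}\cL_{j,m}$ inside the anticommutator $-2\{\cL_{k,m},\cL_{i,k}\cL_{j,m}\}$ is genuinely of order two with no cancellation, since $\{i,k\}$ and $\{j,m\}$ are disjoint. The heart of the argument is to show that all such contributions cancel: every coefficient $p_S$ attached to a shift monomial of order greater than one, and every coefficient attached to a monomial that moves $x_k$ or $x_m$, must vanish. What then survives can only be the identity shift and the two shifts $E_iE_j^{-1}$, $E_jE_i^{-1}$, and the final step is to confirm that the three remaining coefficient polynomials equal $\ell_k(2+\ell_k)\ell_m(2+\ell_m)$ times the corresponding coefficients of $\cL_{i,j}$. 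As a check on the final expression one may use that the identity must be invariant under the transpositions $i\leftrightarrow j$ and $k\leftrightarrow m$, both of which fix the left-hand side; these symmetries are not visible term by term on the right but must emerge once everything is in normal form.

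The main obstacle is the volume of this cancellation rather than any conceptual difficulty. The precise combination of anticommutators and the particular numerical coefficients in \eqref{3.12} are engineered so that the highest-order shifts annihilate one another: for example, in a bracket such as $[\cL_{j,k},\cL_{k,m}]$ the leading doubled-$x_k$ terms (the $E_k^{\pm 2}$ pieces) cancel because the two orderings contribute the same leading coefficient, which is exactly what allows the order to drop and the whole expression to telescope down to the single order-one operator $\cL_{i,j}$. Tracking every such cancellation across the intermediate orders by hand is unwieldy, so I would carry out the normal-ordering and the matching of coefficient polynomials with a symbolic computation, following the Jacobi prototype established in \cite{I17}, of which \eqref{3.12} is the discrete extension.
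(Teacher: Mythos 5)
Your proposal is correct and takes essentially the same route as the paper: the paper's entire proof of Lemma~\ref{le3.6} is the remark that the identity "can be proved by a direct computation," as a discrete extension of the Jacobi-case relation in \cite{I17}, which is precisely the normal-ordering verification (by hand or by symbolic computation) that you describe. Your organizational details are sound — in particular your observation that the doubled shifts such as $E_k^{2}E_j^{-1}E_m^{-1}$ carry identical coefficients in both orderings and hence cancel in commutators like $[\cL_{j,k},\cL_{k,m}]$ is exactly the mechanism that makes the computation close.
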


Since the right-hand side of equation~\eqref{3.12} is generated by the elements  $\cL_{i,k}$, $\cL_{i,m}$, $\cL_{j,k}$, $\cL_{j,m}$, 
$\cL_{k,m}$, we deduce that $\cL_{i,j}$ is generated by these elements:
\begin{equation}\label{3.13}
\cL_{i,j}\in\RR \langle \cL_{i,k}, \cL_{i,m}, \cL_{j,k}, \cL_{j,m}, \cL_{k,m}\rangle.
\end{equation}
As an immediate corollary, we obtain an explicit set of $2d-1$ generators for the symmetry algebra.
\begin{thm}\label{th3.7}
The symmetry algebra $\fAd$ is generated by the set 
\begin{equation}\label{3.14}
\cS=\{\cL_{1,j}:j=2,3,\dots,d\}\cup \{\cL_{i,d+1}:i=1,2,\dots,d\}.
\end{equation}
\end{thm}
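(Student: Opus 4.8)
The plan is to prove that each generator $\cL_{i,j}$ of $\fAd$ (for $1\le i<j\le d+1$) already lies in the subalgebra $\RR\langle\cS\rangle$ generated by the set $\cS$ of \eqref{3.14}. Since the family $\{\cL_{i,j}\}$ generates $\fAd$ by definition, this immediately yields $\fAd=\RR\langle\cS\rangle$.

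First I would identify which operators are visibly contained in $\cS$. By \eqref{3.14}, $\cS$ consists of $\cL_{1,j}$ for $j\in\{2,\dots,d\}$ together with $\cL_{i,d+1}$ for $i\in\{1,\dots,d\}$; the latter family includes $\cL_{1,d+1}$. Using the symmetry $\cL_{i,j}=\cL_{j,i}$ built into Definition~\ref{de3.1}, one sees that $\cS$ contains exactly those $\cL_{i,j}$ for which at least one index equals $1$ or $d+1$. Pictured in the complete graph on the vertices $\{1,\dots,d+1\}$, these are precisely the edges incident to vertex $1$ or to vertex $d+1$. The operators that remain to be generated are therefore the \emph{interior} ones, $\cL_{i,j}$ with $2\le i<j\le d$.

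The core of the argument is to reach every interior operator through the nonlinear identity \eqref{3.13} supplied by Lemma~\ref{le3.6}. Fixing $2\le i<j\le d$, I would apply \eqref{3.13} with $k=1$ and $m=d+1$; the indices $i,j,1,d+1$ are pairwise distinct because $i,j\in\{2,\dots,d\}$, so the hypothesis of the lemma is met. The right-hand side of \eqref{3.13} is generated by the five operators $\cL_{i,1},\cL_{i,d+1},\cL_{j,1},\cL_{j,d+1},\cL_{1,d+1}$, each of which carries the index $1$ or $d+1$ and hence lies in $\cS$ by the previous step. Consequently $\cL_{i,j}\in\RR\langle\cS\rangle$. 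Ranging over all such pairs then places every interior operator in $\RR\langle\cS\rangle$, and combining with the first step shows that all $\cL_{i,j}$, hence $\fAd$ itself, are generated by $\cS$.

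I do not expect a genuine obstacle: Lemma~\ref{le3.6} carries out all the nontrivial algebra, and what remains is careful bookkeeping --- verifying that $\cS$ accounts for exactly the edges meeting $1$ or $d+1$, and that the single uniform choice $k=1$, $m=d+1$ simultaneously forces all five operators on the right of \eqref{3.13} into $\cS$. The only point needing separate mention is the degenerate range $d\le 2$, where no pair with $2\le i<j\le d$ exists; there $\cS$ already lists all generators and no appeal to \eqref{3.13} is required.
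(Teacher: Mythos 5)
Your proposal is correct and follows essentially the same route as the paper: the paper's proof also notes the case $d=2$ is trivial and, for $1<i<j<d+1$, invokes \eqref{3.12} with the elements $\cL_{1,i}$, $\cL_{1,j}$, $\cL_{i,d+1}$, $\cL_{j,d+1}$, $\cL_{1,d+1}$ (i.e.\ your choice $k=1$, $m=d+1$) to recover $\cL_{i,j}$. Your additional bookkeeping --- that the prefactor $\ell_1(2+\ell_1)\ell_{d+1}(2+\ell_{d+1})$ is nonzero is implicitly needed, and that $\cS$ consists exactly of the edges meeting vertex $1$ or $d+1$ --- matches what the paper leaves tacit.
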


\begin{proof}
The proof is trivial when $d=2$. If $d>2$ and $1<i<j<d+1$ we can use the elements $\cL_{1,i}$, $\cL_{1,j}$, $\cL_{i,d+1}$, 
$\cL_{j,d+1}$, $\cL_{1,d+1}$ from $\cS$ to obtain $\cL_{i,j}$ via \eqref{3.12}.
\end{proof}

The weight \eqref{eq:poly-weight} is invariant under the action of the symmetric group $S_{d+1}$ by permuting 
simultaneously the variables $(x_1,\dots,x_{d+1})$ and the parameters $(\ell_1,\dots,\ell_{d+1})$. For $\tau\in S_{d+1}$ and 
$z\in\RR^{d+1}$ we define $\tau\circ z=(z_{\tau(1)},\dots,z_{\tau(d+1)})$. 
We fix the cyclic permutation
\begin{equation*}
   \tau=(1,2,\dots,d,d+1),
\end{equation*}
and denote
\begin{align*}
\sQ_\nu^{+}(x; \ell,N)&: =\tau\circ \sQ_\nu(x; \ell,N)=\sQ_\nu(\tau\circ x; \tau\circ\ell,N),\\
\sQ_\nu^{-}(x; \ell,N)& : =\tau^{-1}\circ \sQ_\nu(x; \ell,N)=\sQ_\nu(\tau^{-1}\circ x; \tau^{-1}\circ\ell,N).
\end{align*}
Since $\sQ_\nu^{+}(x; \ell,N)$ and $\sQ_\nu^{-}(x; \ell,N)$ are simultaneous permutations, by $\tau$ and $\tau^{-1}$, of the orthogonal polynomials defined in 
\eqref{eq:Hahn-simplex}, both are orthogonal polynomials with respect to the inner product \eqref{eq:Hahn-ipd}.
Similarly, we denote by 
\begin{equation}\label{3.15}
\fGd^{+}=\tau \circ \fGd \qquad \text{ and }\qquad \fGd^{-}=\tau^{-1} \circ \fGd
\end{equation}
the Gaudin subalgebras of $\fAd$ obtained by applying $\tau$ and $\tau^{-1}$ to the Gaudin subalgebra $\fGd$. 
Thus $\fGd^{\pm}$ is generated by $\cM^{\pm}_j=\tau^{\pm 1}\circ \cM_j$, $j=1,2\dots,d$. As a simple corollary from 
Theorem~\ref{th3.7}, we can show that the three algebras $\fGd$, $\fGd^{+}$, $\fGd^{-}$ generate $\fAd$.

\begin{prop}\label{pr3.8}
The set
\begin{equation}\label{3.16}
\tilde{\cS}=\{\cM_j:j=1,2,\dots,d\}\cup \{\cM^{+}_j:j=2,\dots,d\}\cup  \{\cM^{-}_j:j=2,\dots,d\}
\end{equation} 
generates $\fAd$.
\end{prop}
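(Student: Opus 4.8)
The plan is to deduce the statement from Theorem~\ref{th3.7}. First I would note that $\tilde{\cS}\subset\fAd$: each $\cM_j$ lies in $\fGd\subset\fAd$, and since the permutation action carries a generator to a generator, $\tau^{\pm1}\circ\cL_{k,l}=\cL_{\tau^{\pm1}(k),\tau^{\pm1}(l)}$, the relabeling $\tau$ is an automorphism of $\fAd$ (the relations \eqref{3.5} are permutation invariant), so $\cM_j^{\pm}\in\fGd^{\pm}=\tau^{\pm1}(\fGd)\subset\fAd$ as well. Hence it suffices to prove the reverse inclusion, namely that the generating set $\cS$ of Theorem~\ref{th3.7} lies in the subalgebra $\langle\tilde{\cS}\rangle$ generated by $\tilde{\cS}$. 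A pleasant feature, which I would flag, is that every element of $\cS$ turns out to be an integer linear combination of elements of $\tilde{\cS}$, so the nonlinear identity \eqref{3.12} is not needed here: only the additive (Jucys--Murphy) structure of the three Gaudin subalgebras is used.

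Next I would pass from the $\cM$'s to their consecutive differences. Writing $\cM_{d+1}=0$ and using \eqref{3.10}, the Jucys--Murphy elements of $\fGd$ are
\begin{equation*}
D_j:=\cM_j-\cM_{j+1}=\sum_{l=j+1}^{d+1}\cL_{j,l},\qquad j=1,\dots,d,
\end{equation*}
which lie in $\langle\tilde{\cS}\rangle$. For the two twisted families I would record that $\cM_1=\cL$ is $S_{d+1}$-invariant, so $\cM_1^{\pm}=\cM_1\in\tilde{\cS}$; consequently all differences $D_j^{\pm}:=\cM_j^{\pm}-\cM_{j+1}^{\pm}$ ($j=1,\dots,d$, with $\cM_{d+1}^{\pm}=0$) also lie in $\langle\tilde{\cS}\rangle$. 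Applying the cyclic relabeling $\tau\colon m\mapsto m+1$ with $\tau(d+1)=1$ to $D_j$ and tracking the single wrap-around gives
\begin{equation*}
D_j^{+}=\cL_{1,j+1}+\sum_{m=j+2}^{d+1}\cL_{j+1,m}=\cL_{1,j+1}+D_{j+1},
\qquad
D_j^{-}=\sum_{m=j}^{d}\cL_{j-1,m}\ \ (j\ge2).
\end{equation*}

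Finally I would isolate the individual generators by two telescoping cancellations. Subtracting the $\fGd^{-}$-difference from the $\fGd$-difference kills an entire row except its last entry,
\begin{equation*}
D_i-D_{i+1}^{-}=\cL_{i,d+1},\qquad i=1,\dots,d-1,
\end{equation*}
and together with $\cL_{d,d+1}=D_d$ this recovers all of $\{\cL_{i,d+1}:i=1,\dots,d\}$. Symmetrically, subtracting the $\fGd$-difference from the $\fGd^{+}$-difference leaves a single off-diagonal entry,
\begin{equation*}
D_j^{+}-D_{j+1}=\cL_{1,j+1},\qquad j=1,\dots,d-1,
\end{equation*}
which recovers $\{\cL_{1,j}:j=2,\dots,d\}$. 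Thus $\cS\subset\langle\tilde{\cS}\rangle$, and Theorem~\ref{th3.7} yields $\langle\tilde{\cS}\rangle=\fAd$.

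The one thing to get right is the index bookkeeping under the cyclic permutation: I would verify the convention $\tau\circ\cL_{k,l}=\cL_{\tau(k),\tau(l)}$ (any other consistent convention merely interchanges the roles of $\fGd^{+}$ and $\fGd^{-}$ and leaves the argument intact), and I would handle carefully the wrap-around $\tau(d+1)=1$ together with the boundary indices $j=1$ (where one uses $\cM_1^{\pm}=\cM_1$) and $j=d$ (where $D_{d+1}=0$). Once these conventions are pinned down the two cancellations above are immediate, so I expect no genuine obstacle beyond this bookkeeping, consistent with the assertion that the result is a simple corollary of Theorem~\ref{th3.7}.
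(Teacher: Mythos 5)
Your proposal is correct and follows essentially the same route as the paper: the paper's proof rests on the identity $\cM_j^{+}=\cM_{j+1}+\sum_{k=j+1}^{d+1}\cL_{1,k}$ and its $\tau^{-1}$ analogue, from which $\cL_{1,j}=(\cM^{+}_{j-1}-\cM_{j})-(\cM^{+}_{j}-\cM_{j+1})$ and $\cL_{i,d+1}=(\cM_{i}-\cM^{-}_{i+1})-(\cM_{i+1}-\cM^{-}_{i+2})$, which are exactly your telescoping identities $\cL_{1,j}=D_{j-1}^{+}-D_{j}$ and $\cL_{i,d+1}=D_{i}-D_{i+1}^{-}$ rewritten, followed by the same appeal to Theorem~\ref{th3.7}. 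Your explicit remarks that $\cM_1^{\pm}=\cM_1$ (needed when the formulas involve $\cM_1^{+}$) and that only integer linear combinations are required are left implicit in the paper but are consistent with its argument.
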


\begin{proof}
From equation \eqref{3.10} we see that
\begin{equation*}
\cM_j^{+}=\cM_{j+1}+\sum_{k=j+1}^{d+1}\cL_{1,k},
\end{equation*}
which shows that 
$$\cL_{1,j}=(\cM^{+}_{j-1}-\cM_{j})-(\cM^{+}_{j}-\cM_{j+1}), \text{ for }j=2,\dots,d+1,$$
with the convention that $\cM_{d+1}=\cM_{d+2}=0$. A similar argument yields
$$\cL_{i,d+1}=(\cM_{i}-\cM^{-}_{i+1})-(\cM_{i+1}-\cM^{-}_{i+2}), \text{ for }i=1,\dots,d,$$
which proves the statement by applying Theorem~\ref{th3.7}.
\end{proof}

\begin{rem}
It is perhaps useful to stress that most of the algebraic properties discussed here do not depend on the fact that $\ell_i\in\NN$ and hold for generic real, or even complex values of $\ell_i$. Indeed, the fact that $\ell_{i}\in\NN$ was used only in the proofs of Proposition~\ref{pr3.2}(i) and Proposition~\ref{pr3.3}. If $\ell_i$ are generic real numbers, then we work with the weight \eqref{eq:Hahn-weight} on the simplex $V^{d}_N$ and the Hahn polynomials discussed in subsection~\ref{ss2.4}, which are defined for all $\nu\in V^{d}_N$. In this case, Proposition~\ref{pr3.2} holds with $V^{d}_{\ell,N}$ replaced by $V^{d}_{N}$, Proposition~\ref{pr3.3} holds with $\Pi_{\ell,N}^d$ replaced by $\Pi_{N}^d$ and all other statements hold without changes.
\end{rem}

We can now describe purely algebraically the irreducible finite-dimensional representations of the symmetry algebra $\fAd$. For 
simplicity of the presentation, we assume below that $\ell_j$ are generic negative numbers  
and therefore we work below with the Hahn polynomials $\sQ_\nu(\cdot; \ell,N)$ which are defined for all $|\nu|\leq N$, and which 
are orthogonal with respect to the weight function \eqref{eq:Hahn-weight}, where $\k_j=-\ell_j-1$. For $n\in\NN$ such that 
$n\leq N$, consider the space $\CV_n^d$ consisting of all polynomials of total degree $n$ that are orthogonal to all polynomials 
of total degree at most $n-1$ with respect to the Hahn weight. From Proposition~\ref{pr3.2}(ii) and Proposition~\ref{pr3.3}, it follows that $\CV_n^d$ is 
invariant under the action of the operators in $\fAd$ and a basis of this space is given by $\CB=\{\sQ_\nu(\cdot; \ell,N):|\nu|=n\}$.
 From Proposition~\ref{pr3.5}, we see that the operators $\cM_j$ act diagonally on $\CB$. From \cite[Section 7]{IX17}, we know 
 that the connecting coefficients between $\CB$ and the bases $\CB^{\pm}=\{\sQ_\nu^{\pm}(\cdot; \ell,N):|\nu|=n\}$ of $\CV_n^d$
can be expressed in terms of the multivariable Racah polynomials introduced by Tratnik \cite{Tr}. Using this fact, we can express 
the action of the operators in $\fGd^{\pm}$ in terms of the multivariable Racah operators introduced in \cite{GI}. Combining 
this with Proposition~\ref{pr3.8}, we obtain explicit formulas for the action of all operators in $\fAd$ on $\CB$ in terms of the 
multivariable Racah variables and operators. We can think now of $\CB$ as the basis of an abstract vector space, and we 
have thus obtained an explicit representation of $\fAd$ on this space. We omit the details since they parallel the computations 
in the Jacobi case, and we refer the reader to  \cite{I17} for similar arguments and constructions. 

\subsection{Other discrete quantum superintegrable systems obtained by limits}\label{ss3.3}

In \cite{IX07}, the difference operators $\CL$ that have orthogonal polynomials of several variables as eigenfunctions are 
classified. The operator $\CL$ given in \eqref{eq:operator1} with generic parameters $\ell_i$ is the most general one. The difference operators for other families of orthogonal polynomials can be considered as limiting cases. In this subsection, we show how these limits lead to other discrete quantum superintegrable systems, related to the Krawtchouk, Meixner and Charlier polynomials. They can be considered as discrete extensions of the harmonic oscillator, as we show in the next subsection. Similar limits between continuous quantum superintegrable systems play an important role in the general theory - see for instance \cite{KMP2} where it is shown that all second order superintegrable systems in two dimensions are limiting cases of the generic quantum superintegrable system on the two-sphere with Hamiltonian given in \eqref{1.6} with $d=2$.

It is interesting to compare the constructions here with earlier works. For instance, in \cite{I12} finite-dimensional representations of the Lie algebra $\mathfrak{sl}_{d+1}$ were used to construct Krawtchouk polynomials in several variables and their spectral properties were related to Cartan subalgebras. In this paper, the Krawtchouk polynomials appear naturally within the context of the Kohno-Drinfeld Lie algebra and are characterized as common eigenfunctions for the operators in Gaudin subalgebras. For numerous  applications of the Krawtchouk polynomials see \cite{DG} and the references therein. 

\subsubsection{\it Krawtchouk polynomials} 
Let $0 < p_i < 1$, $1 \le i \le d$, and $|p|<1$. For $\nu \in \NN_0^d$ and $|\nu| \le N$, 
the Krawtchouk polynomials are defined by, for $x\in V_N^d$,  
\begin{align}\label{eq:KrawK}
\sK_\nu(x; p, N) := & \frac{1}{(-N)_{|\nu|}}  \prod_{j=1}^d   (-N+|\xb_{j-1}|+|\bnu^{j+1}|)_{\nu_j} \\
& \times   \sK_{\nu_j}\left( x_j ; \frac{p_j}{1-|\pb_{j-1}|}, N-|\xb_{j-1}|-|\bnu^{j+1}|\right),\notag
\end{align}
where $\sK_n(t;p,N)$ is the Krawtchouk polynomial of one variable and the normalization is as in 
\cite[Proposition~8.1]{IX17}.
As shown in \cite[Section 6.1.1]{IX07}, these polynomials can be deduced from the Hahn polynomials \eqref{eq:Hahn-simplex} 
by setting $\ell_i +1 = -p_i t$ for $1 \le i \le d$ and $\ell_{d+1} +1 = - (1-|p|)t$ and taking the limit $t \to \infty$.
The operator for the Krawtchouk polynomials, as the limit of \eqref{eq:operator1}, can be written as follows:
\begin{equation} \label{Krop}
\mathcal{L}_\sK =  \sum_{1\leq i,j\leq d}(\delta_{i,j}-p_i)x_j \fsh_i\bsh_j  + \sum_{i=1}^d (p_iN-x_i) \fsh_i,
\end{equation}
see \cite[eq. (6.7)]{IX07}. The decomposition \eqref{eq:operator3} of $\cL=\mathcal{L}_\sK$ holds with 
\begin{equation}\label{Krop2}
    \CL_{i,j} = p_i x_j (E_iE_j^{-1}-1)+ p_j x_i (E_jE_i^{-1}-1),
\end{equation}
where $x_{d+1} = N-|x|$ and $p_{d+1} = 1-|p|$. Moreover, equations \eqref{3.5}, \eqref{3.6} follow immediately under the limit, i.e. the operators $\CL_{i,j}$ in \eqref{Krop2} can be regarded as symmetry operators for $\mathcal{L}_\sK$. However, \eqref{3.12} simplifies significantly under the limits and becomes: 
\begin{equation}\label{eq:Kr-relation}
p_kp_m\cL_{i,j}=\left[\cL_{i,k},[\cL_{k,m},\cL_{j,m}]\right]+p_jp_k\cL_{i,m}+p_ip_m\cL_{j,k}-p_ip_j\cL_{k,m}.
\end{equation}
Using the above relation, we see that the symmetry algebra for the Krawtchouk operator $\mathcal{L}_\sK$ generated by the $\binom{d+1}{2}$ operators in \eqref{Krop2}, can also be generated by the $2d-1$ operators in set $\cS$ defined in \eqref{3.14}. Note also that there are no anticommutators  in \eqref{eq:Kr-relation} and therefore, the set $\cS$ generates the symmetry algebra within the Lie theory.
Since $[\cL_{k,m},\cL_{j,m}]=[\cL_{j,k}, \cL_{k,m}]$, equation \eqref{eq:Kr-relation} can be rewritten as 
\begin{equation}\label{eq:Kr-relation2}
p_kp_m\cL_{i,j}=\left[\cL_{i,k},[\cL_{j,k}, \cL_{k,m}]\right]+p_jp_k\cL_{i,m}+p_ip_m\cL_{j,k}-p_ip_j\cL_{k,m}.
\end{equation}

\subsubsection{\it Meixner  polynomials} 
Let $s >0$, $0 < c_i < 1$, $1 \le i \le d$, and $|c|<1$. For $\nu \in \NN_0^d$, the Meixner polynomials are defined by,
for $x\in \NN_0^d$,  
$$
 \sM_{\nu} (x;s,c) = \frac{1}{(s)_{|\nu|}} \prod_{j=1}^d (s+|\xb_{j-1}|+|\bnu^{j+1}|)_{\nu_j} 
 \sM_{\nu_j}\left(x_j; s+|\xb_{j-1}|+|\bnu^{j+1}|, \frac{c_j}{1- |\cb^{j+1}|}\right),
$$
where $\sM_n(t;s,c)$ denotes the Meixner polynomial of one variable. These polynomials can be derived either from 
the Hahn polynomials on the parallelepiped set $V_\ell^d$, discussed in Subsection 2.4.1, by taking an appropriate 
limit as in \cite[Section 6.1.2]{IX07},
or by setting formally $N= -s$ and $p_j = - c_j /(1-|c|)$ in \eqref{eq:KrawK}. The operator for the Meixner polynomials
can be written as follows:
\begin{equation} \label{Mex}
\mathcal{L}_\sM =  \sum_{1\leq i,j\leq d}\left(\delta_{i,j}+ \frac{c_i}{1-|c|} \right)x_j \fsh_i\bsh_j  
   + \sum_{i=1}^d \left(-x_i+\frac{c_i}{1-|c|}s\right) \fsh_i,
\end{equation}
see \cite[eq. (6.10)]{IX07}. The decomposition \eqref{eq:operator3} of $\cL_\sM$ can be easily obtained by changing the parameters in \eqref{Krop2}. For a different construction of Meixner polynomials in several variables and a direct derivation of the corresponding difference operators see \cite{I122}.

\subsubsection{\it Charlier  polynomials} 
Let $a_j >0$, $1 \le j \le d$. For $\nu \in \NN_0^d$, the Charlier polynomials of $d$ variables are defined by 
$$
   \sC_{\nu}(x;a) = \prod_{i=1}^d \sC_{\nu_i} (x_i; a_i), \qquad 
      \sC_{n} (t; s) = {}_2F_0\left(\begin{matrix} -n,- t \\ - \end{matrix};     - \frac{1}s \right),
$$
where $\sC_n(t;s)$ denote the Charlier polynomial of degree $n$ in one variable. These polynomials are limits 
of the Krawtchouk polynomials in \eqref{eq:KrawK},
$$
   \lim_{N\to \infty}  \sK_\nu(x; a/N, N) = \sC_\nu(x,a), 
$$
where $a/N = (a_1/N,\ldots,a_d/N)$. For $d=1$, this relation is classical and the high dimensional formula follows 
readily from \eqref{eq:KrawK}. Taking the same limit in \eqref{Krop} also shows that the operator for the Charlier 
polynomials is of the form  
\begin{equation} \label{Ch-op}
\mathcal{L_\sC} = \sum_{j=1}^{d}x_j \fsh_j\bsh_j + \sum_{i=1}^d (a_i-x_i) \fsh_i. 
\end{equation}
Note that this time the limit breaks the symmetry, because the operators $\cL_{i,j}$ can have a nonzero only when one of the indices is equal to $d+1$. The operator $\mathcal{L_\sC}$ in \eqref{Ch-op} can be written as $\sum_{i=1}^{d}\CL_{i,d+1}$, where 
\begin{equation}\label{Chop1}
\CL_{i,d+1} = a_i (E_i-1) + x_i (E_i^{-1}-1), \qquad \text{ for }i\in  \{1,\dots,d\},
\end{equation}
are obtained by taking the limit $N\to\infty$ from the operators $\cL_{i,d+1}$ in the Krawtchouk case. When $i\neq j \in\{1,\dots,d\}$, we
consider a different limit, $N \CL_{i,j}$, of the Krawtchouk operators, which as $N\to \infty$ become
\begin{equation}\label{Chop2}
    \CL_{i,j} = a_i x_j (E_iE_j^{-1}-1)+ a_j x_i (E_jE_i^{-1}-1), \qquad \text{ for }i\neq j\in \{1,\dots,d\}.
\end{equation}
The operators $\CL_{i,j}$ in equations \eqref{Chop1}-\eqref{Chop2} generate a symmetry algebra for the Charlier operator $\cL=\mathcal{L_\sC}$ in \eqref{Ch-op} and equations \eqref{3.5} hold when the indices belong to the set $\{1,\dots,d\}$. The operators $\cL_{i,d+1}$ commute with each other, i.e
$$[\cL_{i,d+1},\cL_{j,d+1}]=0$$
when $i\neq j\in \{1,\dots,d\}$, and
$$[\cL_{i,j},\cL_{i,d+1}+\cL_{j,d+1}]=0.$$
However, for $i\neq j\in \{1,\dots,d\}$ we have
$$[\cL_{i,d+1},\cL_{i,j}+\cL_{j,d+1}]=[\cL_{i,d+1},\cL_{i,j}]\neq 0.$$
This is due to the fact that we need different scaling factors for the limits in the Krawtchouk operators. This asymmetry also leads to different limits from equations~\eqref{eq:Kr-relation}-\eqref{eq:Kr-relation2}. If we take $i,j,k\in \{1,\dots,d\}$, $m=d+1$, multiply equation~\eqref{eq:Kr-relation2} by $N^2$ and let $N\to \infty$ we obtain the following formula for the Charlier operators:
\begin{equation}\label{eq:Ch-relation}
a_k\cL_{i,j}=\left[\cL_{i,k},[\cL_{j,k}, \cL_{k,d+1}]\right]+a_ja_k\cL_{i,d+1}+a_i\cL_{j,k}-a_ia_j\cL_{k,d+1}.
\end{equation}
In particular, this means that the symmetry algebra for the Charlier operator $\mathcal{L_\sC}$ is again generated by the $2d-1$ operators in the set $\cS$ in \eqref{3.14} consisting of the operators $\cL_{i,d+1}$, $i=1,\dots,d$ in \eqref{Chop1} together with operators $\CL_{1,j}$ defined in \eqref{Chop2}, $j=2,\dots,d$. 

Next, we rewrite some of the above formulas in a form convenient for the limit in the next subsection. Suppose that 
\begin{equation}\label{Cheqc}
a_1=a_2=\cdots=a_d=a,
\end{equation}
and let us introduce new variables via 
\begin{equation} \label{subs}
x_j=a+\sqrt{2a}\,z_j.
\end{equation}
If we substitute \eqref{subs} in  \eqref{Chop1}, we see that the operators $\cL_{i,d+1}$ can be rewritten as follows:
\begin{equation}\label{Chop3}
\cL_{i,d+1}=\frac{1}{2}\left(1+\frac{\sqrt{2}z_i}{\sqrt{a}}\right)(\sqrt{2a}\fsh_i)(\sqrt{2a_i}\bsh_i) -z_i(\sqrt{2a} \fsh_i),
\end{equation}
while for $i\neq j\in \{1,\dots,d\}$ we obtain from \eqref{Chop2}
\begin{equation}\label{Chop4}
\begin{split}
\frac{1}{a}\cL_{i,j}&=a (-\fsh_i \bsh_j-\fsh_j\bsh_i+\fsh_i\bsh_i+\fsh_j\bsh_j)-\sqrt{2a}(z_j\fsh_i\bsh_j+z_i\fsh_j\bsh_i) \\
&\qquad +\sqrt{2a}z_j(\fsh_i-\bsh_j)+\sqrt{2a}z_i(\fsh_j-\bsh_i).
\end{split}
\end{equation}

\subsection{The quantum harmonic oscillator}\label{se3.4}
We show below how the harmonic oscillator and its symmetry algebra can be obtained as a limit from the discrete quantum systems discussed in the previous subsection.
Suppose that the parameters in the Charlier polynomials are equal, i.e. \eqref{Cheqc} holds and let us introduce new variables $z_1,\dots,z_d$ via equation \eqref{subs}.
Note that a forward shift (resp. a backward shift) in the variable $x_j$ corresponds to a forward shift (resp. a backward shift)
by $1/\sqrt{2a}$ in the variable $z_j$. In particular, 
$$
\sqrt{2a}\fsh_{j}f(z_j)=\frac{f(z_j+1/\sqrt{2a})-f(z_j)}{1/\sqrt{2a}}\to \frac{\pd f(z_j)}{\pd z_j} \qquad\hbox{as $a\to\infty$}.
$$
Consequently, it follows readily that 
\begin{equation}\label{L6}
\sqrt{2a}\fsh_{j} \to \frac{\pd }{\pd z_j} \quad\hbox{and}\quad \sqrt{2a}\bsh_{j} \to \frac{\pd }{\pd z_j} 
\qquad\hbox{as $a\to\infty$}.
\end{equation}
Therefore, if we use equations \eqref{L6} and let $a\to \infty$ in \eqref{Chop3}, we obtain the following operators
\begin{equation}\label{HO1}
\cL_{i,d+1}=\frac{1}{2}\frac{\pd^2}{\pd z_i^2}-z_i\frac{\pd}{\pd z_i} \qquad \text{ for }i\in \{1,\dots,d\}.
\end{equation}
Similarly, if we let $a\to \infty$ in  \eqref{Chop4} we obtain
\begin{equation}\label{HO2}
\cL_{i,j}=\frac{1}{2}\left( \frac{\pd}{\pd z_i}- \frac{\pd}{\pd z_j}\right)^2- (z_i-z_j)\left( \frac{\pd}{\pd z_i}- \frac{\pd}{\pd z_j}\right) \qquad\text{ for }i\neq j \in  \{1,\dots,d\}.
\end{equation}

Therefore, as $a\to\infty$ the Hamiltonian operator for the Charlier polynomials becomes
$$
\CL_\sC \to \CL_H=\sum_{i=1}^{d}\cL_{i,d+1}=\frac{1}{2}\sum_{i=1}^{d}\frac{\pd^2}{\pd z_i^2} -\sum_{j=1}^dz_j\frac{\pd}{\pd z_j} \qquad \text{ as }a\to\infty.
$$
Up to a gauge transformation, the operator $\CL_H$ gives the Hamiltonian for the quantum harmonic oscillator:
$$
\hat{\CL}_H =e^{- \|z\|^2/2} \CL_H  e^{\|z\|^2/2}=\frac{1}{2}\sum_{i=1}^{d}\frac{\pd^2}{\pd z_i^2} -\frac{\|z\|^2}{2}+\frac{d}{2}.
$$
This is in agreement with the limit on the polynomial side. Indeed, it is known that 
$$
\lim_{a\to\infty} (2a)^{n/2}\sC_{n}(\sqrt{2a}t+a;a)=(-1)^nH_n(t),
$$
so that  
$$
\lim_{a\to\infty}   \prod_{j=1}^d(2a)^{\nu_j/2} \sC_{\nu_j}(x_j;a) =(-1)^{|\nu|}  \prod_{j=1}^d H_{\nu_j}(z_j).
$$

The above discussion shows that the discrete quantum systems associated with the Krawtchouk or Charlier operators can be regarded as discrete extensions of the quantum harmonic oscillator. The operators $\cL_{i,j}$, $i\neq j\in\{1,\dots,d+1\}$ defined by equations \eqref{HO1}-\eqref{HO2} commute with $\cL_{H}$ and generate a symmetry algebra. We can easily obtain an explicit set of $2d-1$ generators for this algebra under the limit.  Indeed, if we divide \eqref{eq:Ch-relation} by $a^2$ and let $a\to\infty$, we obtain the following relation for the symmetry operators of the harmonic oscillator:
\begin{equation}\label{eq:HO-relation}
\cL_{i,j}=\left[\cL_{i,k},[\cL_{j,k}, \cL_{k,d+1}]\right]+\cL_{i,d+1}+\cL_{j,k}-\cL_{k,d+1}.
\end{equation}
Thus, the symmetry algebra is generated again by the set 
$$\cS=\{\cL_{1,j}:j=2,3,\dots,d\}\cup \{\cL_{i,d+1}:i=1,2,\dots,d\}.$$
When $i\neq j \in  \{1,\dots,d\}$ we can be replace the operators $\cL_{i,j}$ by the simpler operators
\begin{equation}\label{HOD}
\cD_{i,j}=\cL_{i,j}-\cL_{i,d+1}-\cL_{j,d+1}=-\frac{\pd^2}{\pd z_i\pd z_j}+z_i\frac{\pd}{\pd z_j}+z_j\frac{\pd}{\pd z_i} \quad \text{ for }i\neq j \in  \{1,\dots,d\}.
\end{equation}
Equation~\eqref{eq:HO-relation} becomes
\begin{equation}\label{eq:HO-relation2}
\cD_{i,j}=\left[\cD_{i,k},[\cD_{j,k}, \cL_{k,d+1}]\right],
\end{equation}
and the symmetry algebra is generated also by the set
$$\tilde{\cS}=\{\cD_{1,j}:j=2,3,\dots,d\}\cup \{\cL_{i,d+1}:i=1,2,\dots,d\}.$$
Note also that this algebra contains the first-order symmetries for the harmonic oscillator:
\begin{equation}
z_i \frac{\pd}{\pd z_j}- z_j \frac{\pd}{\pd z_i}= [\cL_{i,j},\cL_{i,d+1}]=[\cD_{i,j},\cL_{i,d+1}]  \text{ for }i\neq j \in  \{1,\dots,d\}.
\end{equation}
Finally, under the gauge transformation, we can write the symmetries of $\hat{\CL}_H$ as 
\begin{align*}
& \hat{\cL}_{i,d+1}= e^{- \|z\|^2/2} \cL_{i,d+1}  e^{\|z\|^2/2}=
\frac{1}{2}\frac{\pd^2}{\pd z_i^2}-\frac{z_i^2}{2}+\frac{1}{2}, \qquad \text{ for }i\in \{1,\dots,d\},\\
& \hat{\cD}_{i,j}= e^{- \|z\|^2/2} \cD_{i,j}  e^{\|z\|^2/2}=
-\frac{\pd^2}{\pd z_i\pd z_j}+z_iz_j, \qquad \text{ for }i\neq j \in  \{1,\dots,d\},
\end{align*}
and this algebra is generated by 
$$\hat{\cS}=\{\hat{\cD}_{1,j}:j=2,3,\dots,d\}\cup \{\hat{\cL}_{i,d+1}:i=1,\dots,d\}.$$

It is interesting to compare our reduction to the harmonic oscillator with the work \cite{GVZ} in which the authors take a rather different path and go in the opposite direction  to construct Krawtchouk polynomials in several variables, starting with the harmonic oscillator.

\section{Hahn polynomials on hexagonal domains}\label{se4}
 
In this section we complete the proof of Theorem \ref{thm:OP-polyhedra} for $d =2$ by proving $|H_{\ell,N}^d| = |V_{\ell,N}^d|$ 
for $d =2$. In fact, in this case, we can prove a stronger result. 

Let $\ell_1,\ell_2,\ell_3$ satisfy $\ell_i \le N$ and $\ell_i + \ell_j \ge N$. In this case, the domain 
$$
    V_{\ell,N}^2: = \{(v_1,v_2) \in \NN_0^2: v_1 \le \ell_1, v_2 \le \ell_2, N-\ell_3 \le v_1+ v_2 \le N\}
$$ 
is hexagonal, including its degenerate cases, and it can be regarded as the set obtained from  the triangle 
$V_N^2$ by cutting the three corners. The number of lattice points in $V_{\ell, N}^2$ is given by 
$$
  |V_{\ell, N}^2| = \binom{N+2}{2} - \binom{N-\ell_1+1}{2}- \binom{N-\ell_2+1}{2}- \binom{N-\ell_3+1}{2}. 
$$
The polynomials $\sQ_{\nu_1,\nu_2}(\cdot; \ell,N)$ in \eqref{eq:Hahn-simplex} are well defined and do not vanish 
on $V_{\ell,N}^2$ if $(\nu_1,\nu_2)$ is in the set
\begin{align*}
       H_{\ell,N}^2: = \{ (\nu_1,\nu_2):  0 \le \nu_1\le \ell_1, 0\le \nu_2 \le h_{\ell,N}(\nu_1) -1\}, 
\end{align*}
where 
$$
h_{\ell,N} (\nu_1):= \min\left(\ell_2,\ell_3,\left\lfloor \frac{\ell_2+\ell_3-\nu_1}{2}\right\rfloor,\ell_1+\ell_2+\ell_3-N-\nu_1,
N-\nu_1\right)+1.
$$
The function $h_{\ell,N}$ is the {\it height function} of $H_{\ell,N}^2$, in the sense that the value of $h_{\ell,N}(\nu_1)$ is the 
number of integer points on the vertical line $x_1 =\nu_1$ in $H_{\ell,N}^2$. Similarly, we define the height function $v_{\ell,N}$ for $V^2_{\ell,N}$ by setting $v_{\ell,N}(\nu_1)$ to be the number of integer points on the vertical line $x_1 =\nu_1$ in $V^2_{\ell,N}$, i.e.
$$ v_{\ell,N}(\nu_1)=|V^2_{\ell,N}\cap \{(x_1,x_2):x_1=\nu_1\}|.$$

\begin{thm} \label{thm:d=2shuffle}
There is a permutation $\tau$ of  $\NN_{\ell_1}:=\{0,\ldots, \ell_1\}$, so that  $$v_{\ell,N}\circ \tau = h_{\ell,N}.$$ 
In particular, the
set of orthogonal polynomials
$$
   \Pi_{\ell,N}^2:= \{\sQ_{\nu_1,\nu_2}(\cdot; \ell,N): (\nu_1,\nu_2)\in H^2_{\ell,N}\}
$$  
on the hexagon $V_{\ell,N}^2$ has the dimension $|V_{\ell,N}^2|$. 
\end{thm}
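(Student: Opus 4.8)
The plan is to prove the stronger assertion that the two height functions $v_{\ell,N}$ and $h_{\ell,N}$, both defined on $\NN_{\ell_1}=\{0,\dots,\ell_1\}$, realize the \emph{same multiset of values}. Existence of a permutation $\tau$ with $v_{\ell,N}\circ\tau=h_{\ell,N}$ is then immediate, and summing over columns gives
$|H^2_{\ell,N}|=\sum_{\nu_1}h_{\ell,N}(\nu_1)=\sum_{\nu_1}v_{\ell,N}(\nu_1)=|V^2_{\ell,N}|$. Combined with \eqref{eq:OPdim} and the orthogonality (hence linear independence) of the $\sQ_\nu$ from Theorem~\ref{thm:OP-polyhedra}(i), this identifies $\{\sQ_{\nu_1,\nu_2}(\cdot;\ell,N):(\nu_1,\nu_2)\in H^2_{\ell,N}\}$ as a basis of dimension $|V^2_{\ell,N}|$.

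First I would record both height functions explicitly. Counting the admissible $\nu_2$ on the line $x_1=\nu_1$ in $V^2_{\ell,N}$ gives $v_{\ell,N}(\nu_1)=\min(\ell_2,N-\nu_1)-\max(0,N-\ell_3-\nu_1)+1$, and using $\ell_i\le N$ together with $\ell_i+\ell_j\ge N$ I would check that this is a \emph{unimodal tent}: it increases with slope $+1$ from $v_{\ell,N}(0)=\ell_2+\ell_3-N+1$, is constant equal to $\min(\ell_2,\ell_3)+1$ on the plateau $[\,N-\max(\ell_2,\ell_3),\,N-\min(\ell_2,\ell_3)\,]$, and then decreases with slope $-1$ to $v_{\ell,N}(\ell_1)=N-\ell_1+1$; the admissibility inequalities guarantee the whole tent sits inside $[0,\ell_1]$. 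Unwinding Definition~\ref{def:Hln} shows that $h_{\ell,N}(\nu_1)-1$ is the minimum of the five quantities $\ell_2,\ \ell_3,\ \lfloor(\ell_2+\ell_3-\nu_1)/2\rfloor,\ \ell_1+\ell_2+\ell_3-N-\nu_1,\ N-\nu_1$, each non-increasing in $\nu_1$, so that $h_{\ell,N}$ is a \emph{non-increasing} step function. The two functions thus have genuinely different shapes, and the content of the theorem is that they are rearrangements of each other.

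To prove the multisets agree, I would compare the distribution functions $\#\{\nu_1:v_{\ell,N}(\nu_1)\ge t\}$ and $\#\{\nu_1:h_{\ell,N}(\nu_1)\ge t\}$ for every integer $t$, since equality for all $t$ is equivalent to equality of the value multisets. For the unimodal $v_{\ell,N}$ the level set is an interval, giving $\#\{v_{\ell,N}\ge t\}=\min(\ell_1,N+1-t)-\max(0,N-\ell_2-\ell_3+t-1)+1$ for $t\le\min(\ell_2,\ell_3)+1$ (both sets are empty above this common maximum). For the non-increasing $h_{\ell,N}$, the condition $h_{\ell,N}(\nu_1)\ge t$ becomes five inequalities, and the crucial simplification is that the floor \emph{disappears} under thresholding: for integers, $\lfloor(\ell_2+\ell_3-\nu_1)/2\rfloor\ge t-1\iff\nu_1\le\ell_2+\ell_3-2t+2$, whence $\#\{h_{\ell,N}\ge t\}=\min(\ell_2+\ell_3-2t+2,\ \ell_1+\ell_2+\ell_3-N-t+1,\ N-t+1,\ \ell_1)+1$. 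It then remains to verify the identity
\[
\min(\ell_2+\ell_3-2t+2,\ \ell_1+\ell_2+\ell_3-N-t+1,\ N-t+1,\ \ell_1)=\min(\ell_1,N+1-t)-\max(0,N-\ell_2-\ell_3+t-1),
\]
which I would prove by splitting on the sign of $N-\ell_2-\ell_3+t-1$: when it is $\le 0$ the right side is $\min(\ell_1,N+1-t)$ and the constraints force the left side to reduce to $\min(N-t+1,\ell_1)$; when it is $>0$ the right side is $\min(\ell_2+\ell_3-2t+2,\ \ell_1+\ell_2+\ell_3-N-t+1)$ and the same two terms are the active ones on the left.

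The main obstacle is precisely this final min-identity, i.e. controlling which of the competing linear terms is active as $t$ ranges over $[1,\min(\ell_2,\ell_3)+1]$. The elegant point that makes it tractable is that passing from pointwise values to level sets simultaneously removes the floor and converts the awkward ``$\min+1$'' defining $h_{\ell,N}$ into a clean count, at which stage the inequalities $\ell_i\le N$ and $\ell_i+\ell_j\ge N$ force exactly the correct two terms to dominate in each branch. Once the distribution functions coincide, any value-preserving bijection supplies $\tau$; since $h_{\ell,N}$ is already sorted in decreasing order, $\tau$ can even be written down explicitly by sorting the tent $v_{\ell,N}$, which completes the proof of $|H^2_{\ell,N}|=|V^2_{\ell,N}|$ and the stated dimension count.
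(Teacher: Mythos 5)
Your proposal is correct, and it establishes the same strengthened claim that the paper's proof does --- that $v_{\ell,N}$ and $h_{\ell,N}$ take the same multiset of values on $\NN_{\ell_1}$ --- but via a genuinely different key lemma. The paper (Lemmas~\ref{lem:cardin1} and \ref{lem:cardin2}) writes down the sorted value profile of each height function explicitly: $\NN_{\ell_1}$ is partitioned into three blocks, a plateau where the maximal value $\min(\ell_2,\ell_3)+1$ is attained $|\ell_2-\ell_3|+1$ times, a middle block where successive values are each attained exactly twice, and a tail where the remaining values are each attained once; this is carried out separately for $V^2_{\ell,N}$ (using that the two slanted sides of the hexagon are parallel) and for $H^2_{\ell,N}$ (casework on the defining inequalities, after reducing to $\ell_3\ge\ell_2$). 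You instead equate the distribution functions $\#\{\nu_1 : \cdot \ge t\}$ for every integer threshold $t$, an equivalent way of matching multisets: the level set of the unimodal tent $v_{\ell,N}$ is an interval with explicit endpoints, the level set of the non-increasing $h_{\ell,N}$ is an initial segment whose length becomes a minimum of \emph{linear} expressions once thresholding removes the floor, and the whole comparison collapses to a single two-case min-identity. That identity does hold: when $N-\ell_2-\ell_3+t-1\le 0$ the terms $\ell_2+\ell_3-2t+2$ and $\ell_1+\ell_2+\ell_3-N-t+1$ dominate $N-t+1$ and $\ell_1$ respectively, so both sides reduce to $\min(\ell_1,N-t+1)$, and when $N-\ell_2-\ell_3+t-1>0$ the domination reverses, exactly as you assert; the needed nonnegativity of the $h$-count down to $t=\min(\ell_2,\ell_3)+1$ follows from $\ell_i\le N$ and $\ell_i+\ell_j\ge N$. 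What each approach buys: yours is symmetric in $\ell_2,\ell_3$ (no case split on their order), concentrates all casework into one inequality comparison, and is the more systematic computation; the paper's yields finer structural information --- the explicit plateau/doubled/single profile of both functions, hence a concrete picture of the sorting permutation $\tau$ (visible in Figure~\ref{ex-shuffle}). Both arguments finish identically, combining $|H^2_{\ell,N}|=|V^2_{\ell,N}|$ with the linear independence coming from the orthogonality in Theorem~\ref{thm:OP-polyhedra}(i) and the dimension count \eqref{eq:OPdim}.
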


The permutation is evident in Figure~\ref{ex-shuffle}, where the values of the height functions are $7,7,6,6,5,5,4,3$ in $V_{\ell,N}^2$ and 
$5,6, 7,7,6,5,4,3$ in $H_{\ell,N}^2$, respectively.

\begin{figure}[htbp] 
\centering
 \includegraphics[width=2.3in]{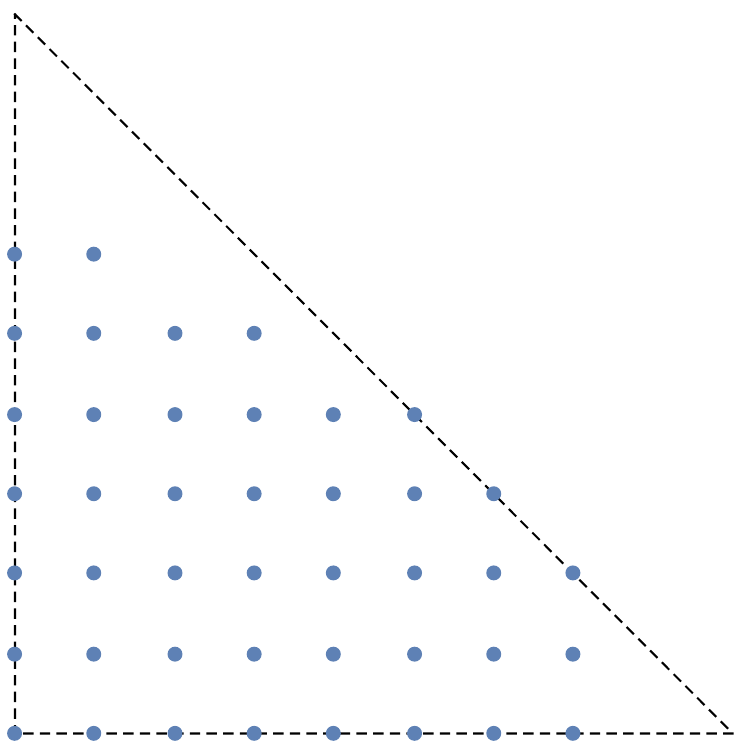}  \quad \includegraphics[width=2.3in]{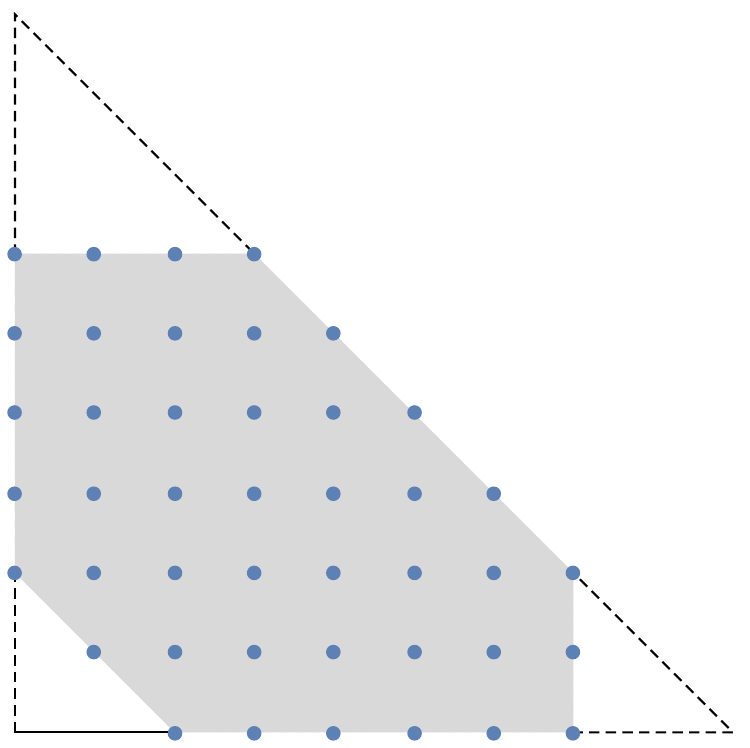}
 \caption{$\ell_1 = 7$, $\ell_2 =6$, $\ell_3 = 7$ and $N =9$. Left: $H^2_{\ell,N}$. Right: $V^2_{\ell,N}$.}
  \label{ex-shuffle}
\end{figure}

The proof follows from the following two lemmas that establish the existence of the permutation, for which we work with 
height functions of the two index sets. 

\begin{lem} \label{lem:cardin1}
The set $\NN_{\ell_1} $ can be partitioned into three disjoint subsets $\NN_{\ell_1} = S_1\cup S_2\cup S_3$,
so that $v_{\ell,N}= \min\{\ell_2,\ell_3\}+1$ on $S_1$ and $|S_1| = |\ell_2 - \ell_3|+1$, 
$v_{\ell,N}$ takes on precisely twice each of the values $\min\{\ell_2,\ell_3\}+1 - i$ for $i =0,1,\ldots, |S_2|/2 -1$ on $S_2$ and  
$|S_2| = 2 \min \{N- \max\{\ell_3,\ell_2\}, \ell_1+\min\{\ell_2,\ell_3\} -N \}$, and 
$v_{\ell,N}$ takes on exactly once each of the values $\max\{\ell_2+\ell_3 - N, N-\ell_1\} -i$ for $i=0,1, \ldots |S_3|-1$ and $|S_3| = 
|2N-\ell_1-\ell_2-\ell_3|$. 
\end{lem}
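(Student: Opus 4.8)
The plan is to compute the height function $v_{\ell,N}(\nu_1)$ in closed form as $\nu_1$ runs over $\NN_{\ell_1}=\{0,\dots,\ell_1\}$ and to read the three subsets off its graph, which I expect to be a discrete ``tent'': strictly increasing with slope $1$, then constant, then strictly decreasing with slope $-1$. First I would record the formula. Fixing $v_1=\nu_1$, the admissible second coordinates are the integers $v_2$ with $\max\{0,\,N-\ell_3-\nu_1\}\le v_2\le\min\{\ell_2,\,N-\nu_1\}$, so that
\[
  v_{\ell,N}(\nu_1)=\min\{\ell_2,N-\nu_1\}-\max\{0,N-\ell_3-\nu_1\}+1 .
\]
The hypotheses $\ell_i\le N$ and $\ell_i+\ell_j\ge N$ guarantee that this interval is nonempty for every $\nu_1\in\NN_{\ell_1}$ and, crucially, that both breakpoints $N-\ell_2$ and $N-\ell_3$ of the inner $\min$ and $\max$ lie in $[0,\ell_1]$; this is precisely what lets the full tent appear, with no truncation at the ends of the domain.

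Next I would exploit a symmetry to halve the work. The reflection $(v_1,v_2)\mapsto(v_1,N-v_1-v_2)$ carries $V_{\ell,N}^2$ with parameters $(\ell_1,\ell_2,\ell_3)$ bijectively onto the domain with parameters $(\ell_1,\ell_3,\ell_2)$ while fixing every vertical line $x_1=\nu_1$, so $v_{\ell,N}$ is invariant under $\ell_2\leftrightarrow\ell_3$ and I may assume $\ell_2\le\ell_3$. Then $N-\ell_3\le N-\ell_2$, and splitting $[0,\ell_1]$ at these breakpoints yields three ranges: on $0\le\nu_1\le N-\ell_3-1$ one gets $v_{\ell,N}(\nu_1)=\ell_2+\ell_3-N+1+\nu_1$, increasing from $\ell_2+\ell_3-N+1$ up to $\ell_2$; on $N-\ell_3\le\nu_1\le N-\ell_2$ one gets the constant value $\ell_2+1=\min\{\ell_2,\ell_3\}+1$; and on $N-\ell_2+1\le\nu_1\le\ell_1$ one gets $v_{\ell,N}(\nu_1)=N-\nu_1+1$, decreasing from $\ell_2$ down to $N-\ell_1+1$. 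I take $S_1$ to be the middle (plateau) range: it consists of $(N-\ell_2)-(N-\ell_3)+1=|\ell_2-\ell_3|+1$ points, all of value $\min\{\ell_2,\ell_3\}+1$, as required.

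It then remains to distribute the two slopes into $S_2$ and $S_3$ by comparing the value sets they realize, both of which top out at $\ell_2=\min\{\ell_2,\ell_3\}$: the ascending slope realizes the block $[\ell_2+\ell_3-N+1,\ \ell_2]$ and the descending slope realizes $[N-\ell_1+1,\ \ell_2]$. Setting $D:=\ell_1+\ell_2+\ell_3-2N$ for the signed difference of their bottom endpoints, the values common to both slopes form the top block $[\,\max\{\ell_2+\ell_3-N,\,N-\ell_1\}+1,\ \ell_2\,]$, each attained once on each slope; I let $S_2$ be these points, whose number is $2\big(\ell_2-\max\{\ell_2+\ell_3-N,\,N-\ell_1\}\big)=2\min\{N-\ell_3,\,\ell_1+\ell_2-N\}$, matching the stated $|S_2|$ after passing to $\min\{\ell_2,\ell_3\},\max\{\ell_2,\ell_3\}$. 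The remaining $|D|$ points lie entirely on whichever slope reaches lower, realizing once each the values in the block topped by $\max\{\ell_2+\ell_3-N,\,N-\ell_1\}$; these constitute $S_3$, with $|S_3|=|D|=|2N-\ell_1-\ell_2-\ell_3|$. A closing check that $|S_1|+|S_2|+|S_3|=\ell_1+1$, carried out separately for $D\ge 0$ and $D\le 0$, confirms that $S_1,S_2,S_3$ exhaust $\NN_{\ell_1}$.

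The piecewise-linear evaluation of $v_{\ell,N}$ is routine, and I do not expect any single hard step. The only genuine bookkeeping obstacle is keeping the nested $\min$, $\max$, and $|\cdot|$ expressions mutually consistent across the sign of $D$ and across the two orderings of $\ell_2,\ell_3$; I defuse this at the outset by reducing to $\ell_2\le\ell_3$ via the reflection symmetry and by tracking the single signed quantity $D=\ell_1+\ell_2+\ell_3-2N$ throughout, which is exactly the quantity whose absolute value measures $S_3$ and whose sign selects which slope supplies the once‑attained values.
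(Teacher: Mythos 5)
Your proof is correct and follows essentially the same route as the paper's: you identify the plateau $\{N-\ell_3,\dots,N-\ell_2\}$ as $S_1$, pair up the two slope segments (which have slopes $\pm 1$) by equal values to form $S_2$, and let the leftover tail of the longer slope be $S_3$ — exactly the paper's decomposition, which it justifies with the one-line remark that "the two slanted sides of the hexagon have the same slope." Your version is simply more explicit (closed form for $v_{\ell,N}$, the reflection symmetry in $\ell_2\leftrightarrow\ell_3$, and the final cardinality check), and it correctly places the top value of $S_2$ at $\min\{\ell_2,\ell_3\}$ rather than $\min\{\ell_2,\ell_3\}+1$, matching the paper's proof (the lemma statement's indexing of $i$ from $0$ is off by one there).
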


\begin{proof}
We assume $\ell_3 \ge \ell_2$. The proof for $\ell_3 < \ell_2$ can be handled similarly. Then $v_{\ell,N}(\nu_1) = 
\ell_2+1$ for $\nu_1 \in S_1=\{N-\ell_3, ... ,N-\ell_2\}$. Since the two slanted sides of the hexagon have the same slope,
it is easy to see that $v_{\ell,N}(\nu_1) = \ell_2 +1- i$ if  
$\nu_1 = N-\ell_3 - i$ and $\nu_1 = N-\ell_2+i$ for $i =1,2,\ldots,  \min \{N- \ell_3, \ell_1+\ell_2-N\}$, which corresponds to 
the statement on $S_2$. Taking $S_3 = \NN_{\ell_1} \setminus (S_2\cup S_3)$ proves the result. The case $\ell_3 < \ell_2$
can be handled similarly. 
\end{proof}

We now consider the set $H_{\ell,N}^2$.  

\begin{lem}\label{lem:cardin2}
The conclusion of the Lemma \ref{lem:cardin1} holds if we replace $v_{\ell,N}$ by $h_{\ell,N}$.
\end{lem}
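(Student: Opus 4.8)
The plan is to analyze the height function $h_{\ell,N}$ directly and to exhibit a partition $\NN_{\ell_1}=S_1\cup S_2\cup S_3$ with exactly the value distribution asserted for $v_{\ell,N}$ in Lemma~\ref{lem:cardin1}. Since $h_{\ell,N}$ depends on $\ell_2,\ell_3$ only through the symmetric quantities $\min\{\ell_2,\ell_3\}$, $\lfloor(\ell_2+\ell_3-\nu_1)/2\rfloor$ and $\ell_2+\ell_3$, I may assume without loss of generality that $\ell_3\ge\ell_2$; then $\min\{\ell_2,\ell_3\}=\ell_2$ and
$$
  h_{\ell,N}(\nu_1)=\min\{\ell_2,\ C(\nu_1),\ D(\nu_1),\ E(\nu_1)\}+1,
$$
where $C(\nu_1)=\lfloor(\ell_2+\ell_3-\nu_1)/2\rfloor$, $D(\nu_1)=\ell_1+\ell_2+\ell_3-N-\nu_1$ and $E(\nu_1)=N-\nu_1$. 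The strategy is to track, as $\nu_1$ runs from $0$ to $\ell_1$, which of the four expressions realizes the minimum. Each is weakly decreasing, and their distinct slopes ($0$ for $\ell_2$, $-\tfrac12$ for $C$, $-1$ for $D$ and $E$) force a clean succession of three regimes.

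First I would isolate the plateau. Using $\lfloor m/2\rfloor\ge k\iff m\ge 2k$ together with the hypotheses $\ell_3\le N$ and $\ell_1+\ell_2\ge N$, one checks that $\ell_2\le\min\{C,D,E\}$ precisely for $0\le\nu_1\le\ell_3-\ell_2$. This identifies $S_1=\{0,\dots,\ell_3-\ell_2\}$, on which $h_{\ell,N}=\ell_2+1=\min\{\ell_2,\ell_3\}+1$, with $|S_1|=\ell_3-\ell_2+1=|\ell_2-\ell_3|+1$, as required.

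For $\nu_1>\ell_3-\ell_2$ the constant $\ell_2$ drops out and $h_{\ell,N}=\min\{C,D,E\}+1$. Since $D\le E$ holds exactly when $\ell_1+\ell_2+\ell_3\le 2N$, I would split into two cases according to the sign of $2N-\ell_1-\ell_2-\ell_3$ and let $F=\min\{D,E\}$ be the lower unit-slope line ($F=D$ in the first case, $F=E$ in the second). Writing $\nu_1=\ell_3-\ell_2+k$ and $m=\min\{N-\ell_3,\ \ell_1+\ell_2-N\}$, a direct computation gives $C=\ell_2-\lceil k/2\rceil$, and in both cases comparing $C$ with $F$ reduces the crossover condition to the single inequality $\lfloor k/2\rfloor\le m$. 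Hence $C$ realizes the minimum for $k\le 2m+1$ and $F$ realizes it for $k\ge 2m+2$. In the first band $h_{\ell,N}=C+1$ runs through the values $\ell_2,\ell_2-1,\dots,\ell_2-m+1$, each attained exactly twice (because $\lceil k/2\rceil$ takes each value on two consecutive $k$), which gives $S_2$ with $|S_2|=2m$; in the second band $h_{\ell,N}=F+1$ has unit slope and runs once through the remaining values down to the value attained at $\nu_1=\ell_1$, giving $S_3$ with $|S_3|=|2N-\ell_1-\ell_2-\ell_3|$. Matching the top of the second range with $\max\{\ell_2+\ell_3-N,N-\ell_1\}$ in each case shows the value lists coincide with those in Lemma~\ref{lem:cardin1}, and $|S_1|+|S_2|+|S_3|=\ell_1+1$ confirms that the $S_i$ exhaust $\NN_{\ell_1}$.

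The one delicate point, which I expect to be the main thing to get right, is the bookkeeping at the crossover $k=2m+1$, where the floor makes $C$ and $F$ coincide: there $h_{\ell,N}$ equals the common boundary value (namely $N-\ell_1$ in the first case and $\ell_2+\ell_3-N$ in the second) exactly once, so this value must be counted in the ``once'' list of $S_3$ rather than being duplicated into $S_2$. This is a parity computation at the junction of the half-slope and unit-slope regimes rather than a conceptual obstacle; once it is handled, the computed multiplicities of $h_{\ell,N}$ agree termwise with those established for $v_{\ell,N}$, so the partition $S_1\cup S_2\cup S_3$ has exactly the properties asserted in Lemma~\ref{lem:cardin1}, and the case $\ell_3<\ell_2$ follows by the symmetry of $h_{\ell,N}$ in $\ell_2,\ell_3$.
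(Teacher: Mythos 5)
Your proposal is correct and follows essentially the same route as the paper's proof: reduce to $\ell_3\ge\ell_2$ by symmetry, identify the plateau $S_1=\{0,\dots,\ell_3-\ell_2\}$, obtain $S_2$ from the band where the floor term $\left\lfloor(2\ell_2-k)/2\right\rfloor$ realizes the minimum (splitting on the sign of $\ell_1+\ell_2+\ell_3-2N$), and take $S_3$ to be the remainder. The only difference is one of thoroughness: the paper stops after declaring $S_3$ to be the complement of $S_1\cup S_2$, whereas you additionally verify the unit-slope value list on $S_3$ and the bookkeeping at the crossover $k=2m+1$ --- details the paper leaves implicit.
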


\begin{proof}
The set $H_{\ell,N}^2$, hence the height function $h_{\ell,N}$, is symmetric in $\ell_2$ and $\ell_3$. Assume 
$\ell_3 \ge \ell_2$. If $\nu_1 \le \ell_3-\ell_2$, it is easy to verify that $h_{\ell,N} (\nu_1) = \ell_2+1$, using $\ell_3\le N$ and 
$\ell_1+\ell_3 \ge N$. This defines the set $S_1$ for $H_{\ell,N}^2$. Assume now that $\nu_1 = \ell_3 - \ell_2 + i$. Then
$\nu_2 \le \left\lfloor \frac{2\ell_2- i}{2}\right\rfloor$. We need to consider two cases. If $\ell_1+ \ell_2+\ell_3 \ge 2N$, then
$h_{\ell,N}(\ell_3-\ell_2+ i) =  \left\lfloor \frac{2\ell_2- i}{2}\right\rfloor+1$ if and only if $2 \ell_2 - i \le 2 (N-\ell_3+\ell_2 -i)$, 
or $i/2 \le N-\ell_3$. This implies that 
$$
h_{\ell,N}(\ell_3-\ell_2+ 2j-1) = h_{\ell,N}(\ell_3-\ell_2+ 2j) = \ell_2-j+1
$$ 
for $j=1,2, \ldots, N- \ell_3$. This defines $S_2$ for $H_{\ell,N}^2$ when $N-\ell_3 \le \ell_1+\ell_2 - N$. 
If $\ell_1+ \ell_2+\ell_3 < 2N$, then the displayed equation holds for $j =1,2,\ldots, \ell_1+\ell_2 - N$ since 
$h_{\ell,N}(\ell_3-\ell_2+ i) =  \left\lfloor \frac{2\ell_2- i}{2}\right\rfloor+1$ if and only if $2 \ell_2 - i \le 2 (\ell_1+2\ell_2 -N-i)$, 
or $i/2 \le (\ell_1+\ell_2 - N)$.  This defines $S_2$ for $H_{\ell,N}^2$ when $N-\ell_3 > \ell_1+\ell_2 - N$. 
Finally, $S_3$ for $H_{\ell,N}^2$ is the complement of $S_1\cup S_2$ in $\NN_{\ell_1}$.  
\end{proof}

The definition of the height functions in the two sets clearly extends to higher dimensions. Experimental computations by
a computer algebra system suggest that an analog of Theorem \ref{thm:d=2shuffle} holds at least for $d =3$; that is, 
there is a permutation $\tau$ between the projections of $H_{\ell,N}^3$ and $V_{\ell,N}^3$ onto the plane $x_3=0$, so that if 
$v_{\ell,N}$ is the height function of $V_{\ell,N}^3$, then 
$v_{\ell,N}\circ \tau$ is the height function of $H_{\ell,N}^3$. The proof of such a result, however, looks to be quite difficult, 
as can be seen from Figure 2, and there does not seem to be a simple way to describe the permutation explicitly. 

\section{Lattice points in polyhedra}\label{se5}

Let $\ell_1,\ell_2,\dots,\ell_{d+1}\in\NN_0^{d}$ be such that $\ell_i\leq N$ and $\ell_i+\ell_j\geq N$ for every $i\neq j$. 
We count the number of lattice points in the polyhedron $H_{\bell,N}^d$ when $d \ge 3$. Since we will be working with 
different values of
$\ell_i$ and $N$, we denote
$$
H^d(\ell_1,\ell_2,\dots,\ell_{d+1},N) = H_{\bell,N}^d
$$
in this section. This set is defined by a family of inequalities that we list below for better references: 
\begin{subequations}\label{5.1}
\begin{align}
\nu_j& \leq \ell_j, &&\text{for } j=1,\dots,d,\label{5.1a}\\
\nu_j+2|\bnu^{j+1}| & \leq |\bell^{j+1}|, &&\text{for } j=1,\dots,d,\label{5.1b}\\
|\nu|& \leq  |\ell|-N,\label{5.1c}\\
|\nu|& \leq N.\label{5.1d}
\end{align}
\end{subequations}
We assume $d \ge 3$. Note that in this case $|\ell|\geq  2N$ and therefore \eqref{5.1c} can be ignored, since 
it is implied by \eqref{5.1d}. What we need to prove is part (ii) of Theorem \ref{thm:OP-polyhedra}, which in view of 
\eqref{eq:dimension} amounts to proving the following theorem.

\begin{thm}\label{th4.1}
With notations above we have
\begin{equation}\label{5.2}
|H^d(\ell_1,\ell_2,\dots,\ell_{d+1},N)|=\binom{N+d}{d}-\sum_{k=1}^{d+1}\binom{N-\ell_k+d-1}{d}.
\end{equation}
\end{thm}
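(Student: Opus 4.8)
The plan is to prove, by induction on $d$, a slightly more general statement $G(d)$: for all $\ell_1,\dots,\ell_{d+1}\in\NN_0$ and all $M\in\NN_0$ satisfying (a) $\ell_i+\ell_j\ge M$ for $i\ne j$ and (b) $2M\le|\ell|$, one has
\[
|H^d(\ell_1,\dots,\ell_{d+1},M)|=\binom{M+d}{d}-\sum_{k=1}^{d+1}\binom{M-\ell_k+d-1}{d},
\]
where throughout $\binom{n}{p}$ is read with the combinatorial convention $\binom{n}{p}=0$ for $n<p$, so that each subtracted term counts lattice points in a genuine corner of the simplex and vanishes once $\ell_k\ge M$. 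Theorem~\ref{th4.1} is then the case $M=N$: the hypothesis $\ell_i\le N$ gives (a), and summing $\ell_i+\ell_j\ge N$ over all $\binom{d+1}{2}$ pairs gives $|\ell|\ge\tfrac{d+1}{2}N\ge 2N$ for $d\ge3$, which is (b). Passing to $G(d)$ is forced on us, because the reduced problems produced below have caps smaller than individual $\ell_k$'s, so the inequality $\ell_k\le M$ cannot be maintained; the crucial point is that (a) and (b) \emph{are} preserved under the reduction.

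The inductive step peels off $\nu_1$. Writing $S=|\bnu^2|=\nu_2+\cdots+\nu_d$ and $L_2=|\ell|-\ell_1$, fixing $\nu_1=m\in\{0,\dots,\ell_1\}$ turns the system \eqref{5.1} for $(\nu_2,\dots,\nu_d)$ into the defining system of a lower-dimensional $H$: the inequalities \eqref{5.1a} and \eqref{5.1b} for $j\ge 2$ are unchanged, while \eqref{5.1d} and \eqref{5.1b} at $j=1$ become $S\le M-m$ and $S\le\bigl\lfloor\tfrac{L_2-m}{2}\bigr\rfloor$, and \eqref{5.1c} is redundant by (b). Setting
\[
\tilde M(m):=\min\Bigl(M-m,\ \bigl\lfloor\tfrac{L_2-m}{2}\bigr\rfloor\Bigr)\le \tfrac{L_2}{2},
\]
the two global caps of $H^{d-1}(\ell_2,\dots,\ell_{d+1},\tilde M(m))$ collapse to $S\le\tilde M(m)$, so the reduced system is precisely this set. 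It again satisfies (a) (since $\ell_i+\ell_j\ge M\ge\tilde M(m)$) and (b) (since $2\tilde M(m)\le L_2$), so the inductive hypothesis applies and yields
\[
|H^d(\ell_1,\dots,\ell_{d+1},M)|=\sum_{m=0}^{\ell_1}\Bigl[\binom{\tilde M(m)+d-1}{d-1}-\sum_{k=2}^{d+1}\binom{\tilde M(m)-\ell_k+d-2}{d-1}\Bigr].
\]

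It remains to evaluate this sum, and I would first dispatch the model case in which $\tilde M(m)=M-m$ for every $m$ (i.e. $2M\le L_2$, no floor). There the main term telescopes by the hockey-stick identity $\sum_{t=0}^{n}\binom{t+d-1}{d-1}=\binom{n+d}{d}$ to $\binom{M+d}{d}-\binom{M-\ell_1+d-1}{d}$, and for each $k\ge2$ the corner sum telescopes to $\binom{M-\ell_k+d-1}{d}-\binom{M-\ell_1-\ell_k+d-2}{d}$; the pairwise bound $\ell_1+\ell_k\ge M$ forces the argument of the last binomial below $d$, so it vanishes by convention and the corner sum collapses to $\binom{M-\ell_k+d-1}{d}$. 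Summing reproduces the claimed formula exactly, and the base case $d=1$ is a one-line count, with (b) guaranteeing that at most one of $\ell_1,\ell_2$ is $<M$, so that $\min(\ell_1,\ell_2,M)+1$ matches the formula.

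The genuine difficulty is the remaining range $m<2M-L_2$, where $\tilde M(m)=\lfloor(L_2-m)/2\rfloor$. Here each value of $\tilde M(m)$ is attained for two consecutive values of $m$, so the hockey-stick bookkeeping acquires a factor-two multiplicity together with parity-dependent boundary corrections at the endpoints $\lfloor L_2/2\rfloor$ and $L_2-N$. The main obstacle will be to show that, after combining the linear regime $m\ge 2M-L_2$ with this doubled floor regime — and after simplifying all their corner contributions via the pairwise bounds $\ell_1+\ell_k\ge M$ and $\ell_j+\ell_k\ge M$ — every surplus binomial cancels and the total is again $\binom{M+d}{d}-\sum_{k=1}^{d+1}\binom{M-\ell_k+d-1}{d}$. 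This parity-and-boundary cancellation is the technical heart of the argument; the rest is structural.
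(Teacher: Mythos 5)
Your strategy is genuinely different from the paper's: you induct on the dimension by slicing off $\nu_1$, strengthening the statement to a $G(d)$ with a decoupled cap $M$, whereas the paper fixes $\ell_d,\ell_{d+1},N$ and pins down $|H^d|$ as the unique solution of difference equations in $\ell_1,\dots,\ell_{d-1}$ (Lemma~\ref{le4.2}) together with the boundary value at $\ell_1=\cdots=\ell_{d-1}=N$ (Lemma~\ref{le4.3}). The structural part of your plan is sound: with $\nu_1=m$ fixed, the residual system is indeed $H^{d-1}(\ell_2,\dots,\ell_{d+1},\tilde{M}(m))$ because $2\tilde{M}(m)\le L_2$ makes the analogue of \eqref{5.1c} redundant; hypotheses (a), (b) are inherited; the base case $d=1$ and the telescoping in the regime $2M\le L_2$ are both correct.

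The gap is that you never evaluate the sum in the regime $2M>L_2$, and this is not deferred bookkeeping --- it is the entire difficulty, of the same order as the case $k=d-1$ in the paper's Lemma~\ref{le4.2} (where the paper needs a three-way inclusion--exclusion whose floor sums $\hat{S}_1,\hat{S}_2,\hat{S}_3$ cancel). In your floor regime the cancellation is \emph{not} term-by-term: since each value $\lfloor (L_2-m)/2\rfloor$ occurs twice, the main terms sum to (roughly) twice a hockey-stick quantity anchored at $\lfloor L_2/2\rfloor$, not to $\binom{M+d}{d}-\binom{L_2-M+d}{d}$, and the deficit must be recovered from the corner sums, which themselves contain nonvanishing floor terms whenever some $\ell_k<\lfloor L_2/2\rfloor$ (this is allowed by your hypotheses). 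A concrete instance: $d=3$, $\ell=(5,6,6,4)$, $M=9$, where $m_0=2M-L_2=2$; the main terms fall short of $\binom{12}{3}-\binom{6}{3}$ by $19$, while the three corner sums fall short of their targets $\binom{7}{3},\binom{5}{3},\binom{5}{3}$ by $9,5,5$ --- the identity holds only in the aggregate, and proving this aggregate cancellation (which must use the pairwise bounds (a) essentially) is precisely the unproven content. One observation would shrink, though not close, the gap: summing (a) over the pairs inside $\{2,\dots,d+1\}$ gives $L_2\ge\frac{d}{2}M$, so for every inductive step into dimension $d\ge4$ one automatically has $2M\le L_2$ and your model case suffices; the floor regime can therefore only occur in the steps establishing $G(2)$ and $G(3)$. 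Until those two cases are actually carried out, the proposal is a plausible plan rather than a proof.
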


The proof of Theorem~\ref{th4.1} occupies the rest of the section and the main ingredients are formulated 
in the next two lemmas. 

\begin{lem}\label{le4.2}
If $k\in\{1,\dots,d-1\}$ and $\ell_k<N$, then 
\begin{equation}\label{5.3}
\begin{split}
&|H^d(N,\dots,N,\ell_{k}+1,\ell_{k+1},\dots,\ell_{d+1},N)|-|H^d(N,\dots,N,\ell_{k},\ell_{k+1},\dots\ell_{d+1},N)|\\
&\qquad=\binom{N-\ell_k+d-2}{d-1}. 
\end{split}
\end{equation}
\end{lem}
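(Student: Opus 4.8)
The plan is to realize the left-hand side of \eqref{5.3} as a count of lattice points on two moving facets. Write $T$ and $S$ for the sets $H^d(N,\dots,N,\ell_k+1,\ell_{k+1},\dots,\ell_{d+1},N)$ and $H^d(N,\dots,N,\ell_k,\ell_{k+1},\dots,\ell_{d+1},N)$, respectively. Raising the single parameter $\ell_k$ by one only weakens the inequalities \eqref{5.1a}--\eqref{5.1d}, so $S\subseteq T$ and the quantity in \eqref{5.3} equals $|T\setminus S|$. First I would prune the constraint list. Since the first $k-1$ parameters equal $N$ we have $|\ell|\ge 2N$, so \eqref{5.1c} may be dropped as already observed; \eqref{5.1a} for $j<k$ reads $\nu_j\le N$ and is implied by \eqref{5.1d}; and for $j\le k-2$ the inequality \eqref{5.1b} is automatic, because $\nu_j+2|\bnu^{j+1}|\le |\nu|+|\bnu^{j+1}|\le 2N\le |\bell^{j+1}|$, the last step using $|\bell^{j+1}|\ge \ell_{k-1}+|\bell^{k+1}|=N+|\bell^{k+1}|\ge 2N$ (note $|\bell^{k+1}|\ge N$ as $k\le d-1$). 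Hence membership in $T$ is decided by \eqref{5.1d}, the tail inequalities \eqref{5.1a} and \eqref{5.1b} with $j\ge k$, and the two facets that depend on $\ell_k$, namely $\nu_k\le \ell_k+1$ and $\nu_{k-1}+2|\bnu^{k}|\le (\ell_k+1)+|\bell^{k+1}|$.

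Consequently $T\setminus S=P_1\sqcup P_2$, where $P_1=\{\nu\in T:\nu_k=\ell_k+1\}$ and $P_2=\{\nu\in T:\nu_k\le\ell_k,\ \nu_{k-1}+2|\bnu^{k}|=(\ell_k+1)+|\bell^{k+1}|\}$ consists of the points saturating the second facet. The target $\binom{N-\ell_k+d-2}{d-1}$ is exactly the number of lattice points of the simplex $\{w\in\NN_0^{d-1}:|w|\le N-\ell_k-1\}$, i.e. the number of $\nu$ with $\nu_k=\ell_k+1$ and $|\nu|\le N$ and nothing else imposed. Projecting out $\nu_k$ exhibits $P_1$ as the subset of this simplex still cut down by the tail inequalities and the second facet; the simplex points missing from $P_1$ are those violating one of these. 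The crux is therefore to prove that $P_2$ accounts for exactly the missing points, i.e.
\[
|P_1|+|P_2|=\binom{N-\ell_k+d-2}{d-1}.
\]

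I would establish this by slicing both $P_1$ and $P_2$ according to the tail $(\nu_{k+1},\dots,\nu_d)$ and its sum $|\bnu^{k+1}|$. For a fixed admissible tail the head coordinates $\nu_1,\dots,\nu_{k-2}$ are free subject only to $|\nu|\le N$, so they contribute one binomial coefficient by the hockey-stick identity, while $\nu_{k-1}$ is governed by the second facet (a further hockey-stick sum for $P_1$, a pinned value for $P_2$). Adding the $P_1$- and $P_2$-contributions tail by tail, the terms carrying the dependence on $|\bell^{k+1}|$ cancel, leaving a sum over tails that is independent of $\ell_{k+1},\dots,\ell_{d+1}$; a final Vandermonde/hockey-stick summation collapses it to the displayed binomial. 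The degenerate case $k=1$ is handled separately: there is no second facet and $P_2=\emptyset$, while the pairwise hypotheses $\ell_1+\ell_j\ge N$ force $\ell_j\ge N-\ell_1>N-\ell_1-1\ge\nu_j$ and likewise make every tail inequality \eqref{5.1b} redundant on $P_1$, so that $P_1$ is the full simplex and the count is immediate.

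The step I expect to be the genuine obstacle is the cancellation $|P_1|+|P_2|=\binom{N-\ell_k+d-2}{d-1}$. The second moving facet $\nu_{k-1}+2|\bnu^{k}|\le \ell_k+|\bell^{k+1}|$ manufactures the auxiliary family $P_2$ that compensates, tail by tail, for precisely the lattice points removed from the naive simplex $P_1$ by the tail inequalities, and it is exactly this compensation that makes the increment independent of $\ell_{k+1},\dots,\ell_{d+1}$. This is the higher-dimensional analogue of the ``shuffle'' underlying \thmref{thm:d=2shuffle}; since, as remarked there, no explicit such correspondence is available for $d\ge 3$, the identity has to be forced through by the binomial bookkeeping above rather than exhibited by an obvious bijection.
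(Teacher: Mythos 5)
Your setup is sound and in fact coincides with the paper's: the pruning of redundant constraints, the observation that a point of $T\setminus S$ must saturate one of the two inequalities that involve $\ell_k$ (giving your $P_1\sqcup P_2$, which is just the paper's Case~1/Case~2 split with the overlap removed rather than subtracted), and the $k=1$ case are all correct. But the proof stops exactly where the lemma begins. The identity $|P_1|+|P_2|=\binom{N-\ell_k+d-2}{d-1}$, which you yourself flag as ``the genuine obstacle,'' is asserted with only the hope that ``the terms carrying the dependence on $|\bell^{k+1}|$ cancel'' under a hockey-stick summation. That cancellation is the entire content of Lemma~\ref{le4.2}, and nothing in your outline shows it happens; in particular, slicing by the tail $(\nu_{k+1},\dots,\nu_d)$ leaves you with head-counts whose binding constraints (whether $\nu_{k-1}\le |\bell^{k+1}|-\ell_k-1-2|\bnu^{k+1}|$ bites or not) depend on the relative sizes of the $\ell_i$ in a way that cannot be dispatched by a generic Vandermonde identity.

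You are also working with a slightly wrong structural picture, which is why the bookkeeping looks uniform to you when it is not. The compensation phenomenon you describe occurs \emph{only} when $k=d-1$. For $2\le k\le d-2$ the paper shows (and you could show directly) that $P_2=\emptyset$: saturation $\nu_{k-1}+2|\bnu^{k}|=|\bell^{k}|+1$ together with $|\nu|\le N$ and $\nu_{k-1}\ge 0$ forces $2|\bnu_{k-2}|+|\bell^{k}|\le 2N-1$, which is impossible because $\bell^{k}$ then has at least four entries with pairwise sums $\ge N$, so $|\bell^{k}|\ge 2N$. In that range there is nothing to compensate, and $|P_1|$ alone equals the simplex count (the paper gets this by induction on $d$, applying the lemma in dimension $d-k+1$). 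The real difficulty is isolated in the single case $k=d-1$, where $|\bell^{d-1}|$ can be smaller than $2N$, $P_1$ genuinely falls short of the simplex, and the paper must compute three explicit head-sliced sums of floor functions, $S_1,S_2,S_3$ as in \eqref{5.15}, \eqref{5.19}, \eqref{5.21}, built from $\lfloor a/2\rfloor\lfloor (a+1)/2\rfloor$, $\lfloor (a+1)/2\rfloor\lfloor (a+2)/2\rfloor$ and $\lfloor (a+1)/2\rfloor$, and verify the pointwise identity $-\hat S_1(a)+\hat S_2(a)-\hat S_3(a)=0$. Without either this dichotomy (governed by whether $|\bell^{k}|\ge 2N$) or the explicit $k=d-1$ computation, your argument is a correct reduction followed by an unproved claim, not a proof.
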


\begin{lem}\label{le4.3}
Equation \eqref{5.2} holds when $\ell_1=\ell_2=\cdots=\ell_{d-1}=N$.
\end{lem}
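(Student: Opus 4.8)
The plan is to specialize the defining inequalities \eqref{5.1} to the case $\ell_1=\cdots=\ell_{d-1}=N$, throw away the ones that become redundant, and then count the survivors directly. First I would determine which inequalities survive. Since $\ell_j=N$ for $j\le d-1$, inequality \eqref{5.1a} for such $j$ reads $\nu_j\le N$ and is implied by \eqref{5.1d}, while \eqref{5.1c} is vacuous because $|\ell|=(d-1)N+\ell_d+\ell_{d+1}\ge dN\ge 2N$. The essential observation is that \eqref{5.1b} collapses for small $j$. Writing $g_j:=\nu_j+2|\bnu^{j+1}|-|\bell^{j+1}|$, so that \eqref{5.1b} is the assertion $g_j\le 0$, a one-line computation using $|\bnu^{j+1}|=\nu_{j+1}+|\bnu^{j+2}|$ and $|\bell^{j+1}|=\ell_{j+1}+|\bell^{j+2}|$ gives the telescoping identity
\begin{equation*}
   g_j-g_{j+1}=\nu_j+\nu_{j+1}-\ell_{j+1}.
\end{equation*}
For $j\le d-2$ we have $\ell_{j+1}=N$, hence $g_j-g_{j+1}=\nu_j+\nu_{j+1}-N\le |\nu|-N\le 0$ by \eqref{5.1d}; thus $g_1\le g_2\le\cdots\le g_{d-1}$, and \eqref{5.1b} for $j=1,\dots,d-2$ follows from the single inequality $g_{d-1}\le 0$. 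Consequently $H^d(N,\dots,N,\ell_d,\ell_{d+1},N)$ is exactly the set of $\nu\in\NN_0^d$ satisfying
\begin{equation*}
   |\nu|\le N,\qquad \nu_d\le\ell_d,\qquad \nu_d\le\ell_{d+1},\qquad \nu_{d-1}+2\nu_d\le\ell_d+\ell_{d+1}.
\end{equation*}

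Next I would count this set by inclusion--exclusion. By the symmetry of the four conditions in $(\ell_d,\ell_{d+1})$ I may assume $\ell_d\le\ell_{d+1}$, so that $\nu_d\le\ell_{d+1}$ is implied by $\nu_d\le\ell_d$ and drops out. Starting from the $\binom{N+d}{d}$ points with $|\nu|\le N$, I remove the bad set $V_2\cup V_4$, where $V_2=\{\nu_d\ge\ell_d+1\}$ and $V_4=\{\nu_{d-1}+2\nu_d\ge\ell_d+\ell_{d+1}+1\}$, each intersected with $\{|\nu|\le N\}$. Shifting $\nu_d\mapsto\nu_d-\ell_d-1$ shows $|V_2|=\binom{N-\ell_d+d-1}{d}$. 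For $|V_4\setminus V_2|$ I would slice on $\nu_d=t$ with $0\le t\le\ell_d$: the inequality becomes $\nu_{d-1}\ge\ell_d+\ell_{d+1}+1-2t$, which is at least $\ell_{d+1}-\ell_d+1\ge 1$ since $t\le\ell_d$, so after the substitution $\nu_{d-1}=\ell_d+\ell_{d+1}+1-2t+u$ with $u\ge 0$ the surviving constraint is $\nu_1+\cdots+\nu_{d-2}+u\le N-\ell_d-\ell_{d+1}-1+t$. Summing the corresponding count $\binom{m+d-1}{d-1}$ (with $m=N-\ell_d-\ell_{d+1}-1+t$) over the admissible $t$ and applying the hockey-stick identity would give $|V_4\setminus V_2|=\binom{N-\ell_{d+1}+d-1}{d}$. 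Then
\begin{equation*}
   |H^d(N,\dots,N,\ell_d,\ell_{d+1},N)|=\binom{N+d}{d}-|V_2|-|V_4\setminus V_2|=\binom{N+d}{d}-\binom{N-\ell_d+d-1}{d}-\binom{N-\ell_{d+1}+d-1}{d},
\end{equation*}
which is precisely \eqref{5.2}, once one notes $\binom{N-\ell_k+d-1}{d}=\binom{d-1}{d}=0$ for $k\le d-1$.

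The step I expect to be the main obstacle is the evaluation of $|V_4\setminus V_2|$. Here the lower end of the summation range over $t$ is governed by the sign of $N-\ell_d-\ell_{d+1}-1+t$, and it is exactly the standing admissibility hypothesis $\ell_d+\ell_{d+1}\ge N$ that forces the index $m$ to start at $0$ (as $t$ increases from $\ell_d+\ell_{d+1}+1-N$ up to $\ell_d$), so that $\sum_{m=0}^{N-\ell_{d+1}-1}\binom{m+d-1}{d-1}$ telescopes cleanly to $\binom{N-\ell_{d+1}+d-1}{d}$. Without $\ell_d+\ell_{d+1}\ge N$ the summation would begin at a negative value and an extra correction term $\binom{N-\ell_d-\ell_{d+1}+d-1}{d}$ would appear, so this is the one place where the geometry of the admissible polyhedron genuinely enters the count; I would therefore state the hypothesis $\ell_d+\ell_{d+1}\ge N$ explicitly at this point and verify the two boundary situations ($\ell_d+\ell_{d+1}=N$ and $\ell_{d+1}=N$) separately, both of which are handled automatically by the vanishing of the relevant binomial coefficients.
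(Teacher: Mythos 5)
Your proof is correct, and while the first half coincides with the paper's reduction, the counting step takes a genuinely different route. Your telescoping identity $g_j-g_{j+1}=\nu_j+\nu_{j+1}-\ell_{j+1}$ is essentially the paper's Remark~\ref{re4.4} in cleaner form (there, \eqref{5.1b} for $j=s-1$ is discarded because it follows from \eqref{5.1b} for $j=s$ plus \eqref{5.1d} when $\ell_s=N$); both arguments arrive at the same reduced system \eqref{5.22}. The difference lies in how that system is counted. The paper fixes the prefix $\bnu_{d-2}$, recognizes the remaining two-dimensional system as a hexagon count $|H^2(\Nt,\ell_d,\ell_{d+1},\Nt)|$ in one of three regimes according to the size of $|\bnu_{d-2}|$, inserts the $d=2$ formula for each regime, and sums over $\bnu_{d-2}$ via \eqref{5.5}; this keeps the argument structurally parallel to the slice-and-reduce-dimension template used in the proof of Lemma~\ref{le4.2}, but it leans on the two-dimensional count established in Section~\ref{se4}. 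You instead count the complement of the set inside the simplex $\{|\nu|\le N\}$, partitioning the bad set into $V_2=\{\nu_d>\ell_d\}$ and $V_4\setminus V_2$ and evaluating each piece by a coordinate shift plus the hockey-stick identity. Your route is self-contained (no appeal to the hexagon formula) and more elementary, at the cost of breaking the uniformity with the rest of Section~\ref{se5}. You also isolate explicitly the one point where the admissibility hypothesis $\ell_d+\ell_{d+1}\ge N$ enters (the lower limit of the summation over $t$, which must start at $m=0$); in the paper's version this hypothesis is instead consumed by the requirement that the two-dimensional parameters in the first regime satisfy the $d=2$ admissibility condition. Both proofs are valid; yours would serve as a shorter, independent verification of \eqref{5.23}.
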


Throughout this section we use the well-known identity:
\begin{equation}\label{5.4}
\binom{a}{k}+\binom{a}{k+1}=\binom{a+1}{k+1}.
\end{equation}
In particular, for $M\in\NN$, this identity leads to
\begin{equation}\label{5.5}
\sum_{\begin{subarray}{c} b\in\NN_0^k\\ |b|\leq M\end{subarray}}\binom{M-|b|+j}{j+1}=\binom{M+j+k}{j+k+1}.
\end{equation}
When $k=0$, we interpret $\NN_0^{0}$ as the trivial space $\{0\}$ and the above sum as a single term corresponding 
to the zero vector $b=0$.

We first show that the proof of Theorem~\ref{th4.1} follows from the lemmas. 

\begin{proof}[Proof Theorem~\ref{th4.1}]
If $\ell_{d},\ell_{d+1},N$ are fixed and $|H^d(\ell_1,\dots,\ell_{d+1},N)|$ is regarded as a function of $\ell_1,\dots,\ell_{d-1}$,
then it is easy to see that this function is uniquely determined by the difference equations in Lemma~\ref{le4.2} together with 
its value at $\ell_1=\cdots=\ell_{d-1}=N$. Using \eqref{5.4} we see that the right-hand side of \eqref{5.2} satisfies the same 
difference equations and has the same value at $\ell_1=\cdots=\ell_{d-1}=N$, thus completing the proof.
\end{proof}

Next we prove the two lemmas. Note that the left-hand side of equation \eqref{5.3} represents the number of points in the set 
\begin{equation}\label{5.6}
\cS_{k}:= H^d(N,\dots,N,\ell_{k}+1,\ell_{k+1},\dots,\ell_{d+1},N)\setminus H^d(N,\dots,N,\ell_{k},\ell_{k+1},\dots\ell_{d+1},N).
\end{equation}
We proceed with the proof of Lemma~\ref{le4.2} by induction on $d$. We show that $|\cS_{k}|$ is equal to the right-hand 
side of equation \eqref{5.3} by considering three cases: 
\begin{enumerate}[i.]
\item $k=1$; 
\item $1<k<d-1$;
\item $k=d-1$.
\end{enumerate}
The first case is relatively simple, because $\ell_1$ appears in only one of the equations in \eqref{5.1} (after we ignore
\eqref{5.1c}). The second case is a bit more involved since $\ell_k$ now appears in two equations, but the essential 
computations come only from the first equation. The last case is the most complicated one, since we can have contributions 
from both equations.

\begin{proof}[Proof of Lemma~\ref{le4.2} when $k=1$]
We determine the number of points in the set $\cS_1=H^d(\ell_1+1,\ell_2,\dots,\ell_{d+1},N)\setminus 
H^d(\ell_1,\ell_2,\dots,\ell_{d+1},N)$. Since $\ell_1$ appears only in equation \eqref{5.1a} with $j=1$, it follows that 
if $\nu\in \cS_1$ then $\nu_1=\ell_1+1$. Moreover, from \eqref{5.1d} it follows that the remaining coordinates must 
satisfy the inequality
\begin{equation}\label{5.7}
  |\bnu^2|\leq N-\ell_1-1.
\end{equation}
It is easy to see now that equations \eqref{5.1a}-\eqref{5.1b} follow from \eqref{5.7} and therefore $\nu\in \cS_1$ if and only if 
$\nu_1=\ell_1+1$ and \eqref{5.7} holds. Thus, $|\cS_1|$ equals the number of solutions of \eqref{5.7}, proving \eqref{5.3} 
when $k=1$.
\end{proof}

\begin{rem}\label{re4.4}
Note that if $l_s=N$ for some $s\in\{2,\dots,d-1\}$ then equation~\eqref{5.1b} for $j=s-1$ can be ignored, because the 
inequality is implied 
by adding equation~\eqref{5.1b} for $j=s$ and equation \eqref{5.1d}. In particular, if $\ell_1=\ell_2=\cdots=\ell_{k-1}=N$ and 
$d\geq 3$, 
the set $H^d(N,\dots,N,\ell_{k},\ell_{k+1},\dots,\ell_{d+1},N)$ consists of all $\nu\in\NN_0^{d}$ such that
\begin{subequations}\label{5.8}
\begin{align}
\nu_j& \leq \ell_j, &&\text{for } j=k,\dots,d,\label{5.8a}\\
\nu_j+2|\bnu^{j+1}| & \leq |\bell^{j+1}|, &&\text{for } j=k-1,\dots,d,\label{5.8b}\\
|\nu| & \leq N.\label{5.8c}
\end{align}
\end{subequations}
\end{rem}

\begin{proof}[Proof of Lemma~\ref{le4.2} when $k\in\{2,\dots,d-2\}$]

From Remark~\ref{re4.4} we know that equations \eqref{5.8} determine $H^d(N,\dots,N,\ell_{k},\dots,\ell_{d+1},N)$. Since 
only two of these equations depend on $\ell_{k}$, namely, equation \eqref{5.8a} with $j=k$ and \eqref{5.8b} with $j=k-1$, 
it follows that if $\nu\in\cS_k$, then we must have equality in at least one of these inequalities, with $\ell_{k}$ replaced 
by $\ell_{k}+1$. We consider these two possible cases next.

\medskip\noindent
{\em Case 1.}  $\nu\in\cS_k$ with $\nu_k=\ell_{k}+1$. Equations \eqref{5.8} for $\nu\in H^d(N,\dots,N,\ell_{k}+1,\ell_{k+1},
\dots,\ell_{d+1},N)$  can be written as follows:
\begin{subequations}\label{5.9}
\begin{align}
\nu_j & \leq \ell_j, &&\text{for } j=k+1,\dots,d,\label{5.9a}\\
(\nu_{k-1}+\ell_k+1)+2|\bnu^{k+1}| & \leq |\bell^{k+1}|, \label{5.9b}\\
(\ell_k+1)+2|\bnu^{k+1}| & \leq |\bell^{k+1}|, \label{5.9c}\\
\nu_j+2|\bnu^{j+1}|& \leq |\bell^{j+1}|, &&\text{for } j=k+1,\dots,d,\label{5.9d}\\
|\bnu_{k-2}|+(\nu_{k-1}+\ell_k+1)+|\bnu^{k+1}|& \leq N.\label{5.9e}
\end{align}
\end{subequations}
Note that equation \eqref{5.9c} can be dropped, since it is implied by \eqref{5.9b}. The last equation shows that 
$|\bnu_{k-2}|\leq N-\ell_k-1$. Fix $\bnu_{k-2}\in\NN_0^{k-2}$ such that $|\bnu_{k-2}|\leq N-\ell_k-1$. We set 
$\nut_1=\nu_{k-1}+\ell_k+1$, $\nut_2=\nu_{k+1}$,\dots, $\nut_{d-k+1}=\nu_{d}$, $\ellt_j=\ell_{j+k-1}$ for 
$j=2,\dots,d-k+2$ and $\Nt=N-|\bnu_{k-2}|$. Then the remaining equations in \eqref{5.9} can be rewritten as
\begin{subequations}\label{5.10}
\begin{align}
\nut_j & \leq \ellt_j, &&\text{for } j=2,\dots,d-k+1,\label{5.10a}\\
\nut_j+2|{\tilde \bnu}^{j+1}| & \leq |{\tilde \bell}^{j+1}|, &&\text{for } j=1,\dots,d-k+1,\label{5.10b}\\
|{\tilde \nu}| & \leq \Nt.\label{5.10c}
\end{align}
\end{subequations}
We need to count the number of the solutions $\tilde \nu\in\NN_0^{d-k+1}$ of the above system for 
which $\nut_1\geq \ell_k+1$. Since $d-k+1\geq 3$, the system of equations \eqref{5.10} represents the points in 
$H^{d-k+1}(\Nt,\ellt_{2},\dots,\ellt_{d-k+2},N)$. Therefore, using the induction hypothesis, we see that the number 
of solutions of \eqref{5.10} for which $\nut_1\geq \ell_k+1$ is given by the formula
\begin{align*}&
|H^{d-k+1}(\Nt,\ellt_{2},\dots,\ellt_{d-k+2},\Nt)|-|H^{d-k+1}(\ell_k,\ellt_{2},\dots,\ellt_{d-k+2},\Nt)|\\
&\quad= \sum_{j=0}^{\Nt - \ell_k-1} \binom{\Nt-j-\ell_k +d-k-1}{d-k}=\binom{N-|\bnu_{k-2}|-\ell_k+d-k}{d-k+1}
\end{align*}
by \eqref{5.5} with $k=1$. Summing over $\bnu_{k-2}$ we see,  by \eqref{5.5}, that the number of $\nu\in\cS_k$ 
for which $\nu_k=\ell_{k}+1$ is 
\begin{equation}\label{5.11}
\sum_{\begin{subarray}{c} \bnu_{k-2}\in\NN_0^{k-2}\\ |\bnu_{k-2}|\leq N-\ell_k-1\end{subarray}}
    \binom{N-|\bnu_{k-2}|-\ell_k+d-k}{d-k+1}=\binom{N-\ell_k+d-2}{d-1}.
\end{equation}

\medskip\noindent
{\em Case 2.}  $\nu\in\cS_k$ and equality holds in \eqref{5.8b} with $\ell_k$ replaced by $\ell_k+1$ when $j=k-1$. 
Equations \eqref{5.8b} under the restriction and \eqref{5.8c} for $\nu\in H^d(N,\dots,N,\ell_{k}+1,\ell_{k+1},\dots,\ell_{d+1},N)$
can be written as follows:
\begin{align*}
\nu_{k-1}+2|\bnu^{k}| & = |\bell^{k}|+1, \\ 
|\bnu_{k-2}|+\nu_{k-1}+|\bnu^{k}| & \leq N.
\end{align*}
The first equation shows that $\nu_{k-1}= |\bell^{k}|+1-2|\bnu^{k}|$ and substituting this into the second equation we obtain 
the inequality $|\bnu^{k}|\geq |\bnu_{k-2}|+|\bell^{k}|-N+1$. Note that $\nu_{k-1}=  |\bell^{k}|+1-2|\bnu^{k}|\geq 0$ if and only 
if $|\bnu^{k}|\leq (|\bell^{k}|+1)/2$ and therefore we must have 
$$
|\bnu_{k-2}|+|\bell^{k}|-N+1 \leq (|\bell^{k}|+1)/2,
$$
which is equivalent to
$$
2|\bnu_{k-2}|+|\bell^{k}|\leq 2N-1.
$$
However, the last inequality cannot hold for any values of $\bnu_{k-2}\in\NN_0^{k-2}$, since $k\leq d-2$ and therefore
$|\bell^{k}|\geq \ell_{d-2}+ \ell_{d-1}+ \ell_{d}+ \ell_{d+1}\geq 2N$. We conclude that there are no solutions $\nu\in \cS_k$
in Case 2. 

\medskip\noindent 
{\em Summary:} We have no solutions in Case 2, so that $|\cS_{k}|$ is given by the right-hand side of equation 
\eqref{5.11} in Case 1, completing the proof of \eqref{5.3} for $k \in \{2,\ldots, d-2\}$.
\end{proof}

\begin{proof}[Proof of Lemma~\ref{le4.2} when $k=d-1$]
When $\ell_1=\cdots=\ell_{d-2}=N$, equations~\eqref{5.8} can be rewritten as follows:
\begin{subequations}\label{5.12}
\begin{align}
\nu_{d-1} & \leq \ell_{d-1}, \label{5.12a}\\
\nu_{d}& \leq \min(\ell_d,\ell_{d+1}),  \label{5.12b}\\
\nu_{d-2}+2(\nu_{d-1}+\nu_{d})& \leq \ell_{d-1}+\ell_{d}+\ell_{d+1},\label{5.12c}\\
\nu_{d-1}+2\nu_{d}&\leq \ell_{d}+\ell_{d+1},\label{5.12d}\\
|\nu|&\leq N.\label{5.12e}
\end{align}
\end{subequations}
If $\nu\in\cS_{d-1}$, we must have equality in \eqref{5.12a} or \eqref{5.12c} with $\ell_{d-1}$ replaced by $\ell_{d-1}+1$.
This leads to two cases but the two cases could have a nonempty overlap. As a result, we consider three cases below. 

\medskip\noindent
{\em Case 1.}  $\nu\in\cS_{d-1}$ with $\nu_{d-1}=\ell_{d-1}+1$. Equations~\eqref{5.12} for $\nu\in H^d(N,\dots,N,
\ell_{d-1}+1,\ell_{d}, \ell_{d+1},N)$ can be written as follows:
\begin{subequations}\label{5.13}
\begin{align}
\nu_{d} & \leq \min(\ell_d,\ell_{d+1}),  \label{5.13a}\\
\nu_{d-2}+2\nu_{d} & \leq \ell_{d}+\ell_{d+1}-\ell_{d-1}-1, \label{5.13b}\\
2\nu_{d} & \leq \ell_{d}+\ell_{d+1}-\ell_{d-1}-1,\label{5.13c}\\
\nu_{d-2}+\nu_{d} & \leq N-|\bnu_{d-3}| -\ell_{d-1}-1 .\label{5.13d}
\end{align}
\end{subequations}
Clearly, the last equation can hold only if $|\bnu_{d-3}|\leq N-(\ell_{d-1}+1)$. Fix $\bnu_{d-3}\in\NN_0^{d-3}$ to satisfy
this condition. We obtain a two-dimensional system for $\nu_{d-2}$ and $\nu_d$. Equations~\eqref{5.13a} and 
\eqref{5.13c} can be ignored since they are implied by \eqref{5.13b}. We then compute the number of solutions of 
\begin{enumerate}[(i)]
\item System I: consisting only of equation \eqref{5.13d}.
\item System II:
\begin{subequations}\label{5.14}
\begin{align}
\nu_{d-2}+2\nu_{d} & \geq \ell_{d}+\ell_{d+1}-\ell_{d-1}, \label{5.14a}\\
\nu_{d-2}+\nu_{d} & \leq N-|\bnu_{d-3}| -\ell_{d-1}-1 .\label{5.14b}
\end{align}
\end{subequations}
\end{enumerate}
The number of solutions of \eqref{5.13b} and \eqref{5.13d} is equal to the number of solutions of System I minus the
number of solutions of the System II. 

For fixed $\bnu_{d-3}$, System I has $\binom{N-|\bnu_{d-3}| -\ell_{d-1}+1}{2}$ solutions. Summing over all possible 
$\bnu_{d-3}$, we obtain $\binom{N-\ell_{d-1}+d-2}{d-1}$ solutions by \eqref{5.5}. 

System II can be rewritten as follows
\begin{equation*}
\frac{\ell_{d}+\ell_{d+1}-\ell_{d-1}-\nu_{d-2}}{2} \leq \nu_d\leq  N-|\bnu_{d-3}| -\ell_{d-1}-1 -\nu_{d-2},
\end{equation*}
which can have a solution if and only if 
\begin{equation*}
\frac{\ell_{d}+\ell_{d+1}-\ell_{d-1}-\nu_{d-2}}{2} \leq N-|\bnu_{d-3}| -\ell_{d-1}-1 -\nu_{d-2},
\end{equation*}
or, equivalently, if and only if $\nu_{d-2}\leq 2N- |\bell^{d-1}| -2-2|\bnu_{d-3}|$. Clearly, if $|\bell^{d-1}|\geq 2N-1$, the 
system will have no solutions. In the case $|\bell^{d-1}|< 2N-1$, the number of solutions of System II is given by the formula
\begin{equation*}
S_1=\sum_{\begin{subarray}{c} \bnu_{d-3}\in\NN_0^{d-3}\\ |\bnu_{d-3}|\leq \lfloor (2N-|\bell^{d-1}|-2)/2\rfloor\end{subarray}}
\sum_{\nu_{d-2}=0}^{2N-|\bell^{d-1}|-2-2|\bnu_{d-3}|}\left\lfloor \frac{2N-|\bell^{d-1}|-2|\bnu_{d-3}|-\nu_{d-2}}{2}\right\rfloor.
\end{equation*}
It is not hard to see that the we can change the limits in the sums and write the last formula as 
\begin{align}\label{5.15}
  S_1=\sum_{\begin{subarray}{c} \bnu_{d-3}\in\NN_0^{d-3}\\
    |\bnu_{d-3}|\leq \lfloor (2N-|\bell^{d-1}|-1)/2\rfloor\end{subarray}}\hat{S}_1(2N-|\bell^{d-1}|-2|\bnu_{d-3}|)
\end{align}
where, for $a\in\NN_0$, 
\begin{align*}
\hat{S}_1(a)&=\sum_{t=0}^{a}\left\lfloor \frac{t}{2}\right\rfloor=\left\lfloor \frac{a}{2}\right\rfloor\left\lfloor 
\frac{a+1}{2}\right\rfloor. \end{align*}
Equation \eqref{5.15} can be used also when $|\bell^{d-1}|= 2N-1$ since it defines $S_1=0$.

Combing the above computations, we obtain the following formula:
\begin{equation}\label{5.16}
\begin{split}
|\cS_{d-1}\cap \{\nu:\nu_{d-1}=\ell_{d-1}+1\}| =\binom{N-\ell_{d-1}+d-2}{d-1} -
 \begin{cases}
  0 & \text{if }|\bell^{d-1}|\geq 2N\\
  S_1 &\text{if }|\bell^{d-1}|< 2N,
\end{cases}
\end{split}
\end{equation}
where $S_1$ is given in \eqref{5.15}.

\medskip\noindent
{\em Case 2.}  $\nu\in\cS_{d-1}$ with $\nu_{d-2}+2(\nu_{d-1}+\nu_{d})= |\bell^{d-1}|+1$. Then 
$\nu_{d-2}=|\bell^{d-1}|-2|\bnu^{d-1}|+1\geq 0$ if and only if $2|\bnu^{d-1}|\leq |\bell^{d-1}|+1$. Together with
the remaining equations in \eqref{5.12} with $\ell_{d-1}$ replaced by $\ell_{d-1}+1$, we obtain the system:
\begin{subequations}\label{5.17}
\begin{align}
\nu_{d-1}&\leq \ell_{d-1}+1, \label{5.17a}\\
\nu_{d}&\leq \min(\ell_d,\ell_{d+1}),  \label{5.17b}\\
\nu_{d-1}+2\nu_{d}&\leq \ell_{d}+\ell_{d+1},\label{5.17c}\\
|\bell^{d-1}|+|\bnu_{d-3}|-N+1 & \leq \nu_{d-1}+\nu_{d}\leq \frac{|\bell^{d-1}|+1}{2}. \label{5.17d}
\end{align}
\end{subequations}
The last equation shows that the system can have a solution only when $|\bell^{d-1}|+|\bnu_{d-3}|-N+1 
\leq \frac{|\bell^{d-1}|+1}{2}$, which is equivalent to $2|\bnu_{d-3}|\leq 2N-1-|\bell^{d-1}|$. In particular, 
if $|\bell^{d-1}|\geq 2N$, the system will have no solutions. If $|\bell^{d-1}|< 2N$, we fix $\bnu_{d-3}\in\NN^{d-3}$ 
such that $2|\bnu_{d-3}|\leq 2N-1-|\bell^{d-1}|$ and compute the number of solutions of
\begin{enumerate}[(i)]
\item System I: consisting of equations \eqref{5.17a}, \eqref{5.17b}, \eqref{5.17c} and  
$\nu_{d-1}+\nu_{d}\leq \lfloor (|\bell^{d-1}|+1)/2\rfloor$,
\item System II: consisting of equations \eqref{5.17a}, \eqref{5.17b}, \eqref{5.17c} and 
$\nu_{d-1}+\nu_{d}\leq |\bell^{d-1}|+|\bnu_{d-3}|-N$.
\end{enumerate}
Then the number of solutions for fixed $\bnu_{d-3}$ is the number of solutions of System I minus the number of 
solutions of System II. 

Both systems are two-dimensional. If we set 
\begin{equation*}
N_1=\left\lfloor \frac{|\bell^{d-1}|+1}{2}\right\rfloor\qquad \text{ and }\qquad N_2=|\bell^{d-1}|+|\bnu_{d-3}|-N,
\end{equation*}
then System I will have $H^2(\ell_{d-1}+1,\ell_d,\ell_{d+1},N_1)$ solutions, while  System II will have $H^2(\ell_{d-1}+1,\ell_d,
\ell_{d+1},N_2)$ solutions. A straightforward computation shows that 
\begin{align*}
&H^2(\ell_{d-1}+1,\ell_d,\ell_{d+1},N_1)-H^2(\ell_{d-1}+1,\ell_d,\ell_{d+1},N_2)\\
&\qquad=(N_1-N_2)(1+|\bell^{d-1}|-N_1-N_2)\\
&\qquad=\left\lfloor \frac{2N+1-|\bell^{d-1}|-2|\bnu_{d-3}|}{2}\right\rfloor 
   \left\lfloor \frac{2N+2-|\bell^{d-1}|-2|\bnu_{d-3}|}{2}\right\rfloor.
\end{align*}
Combing the above computations, we see that:
\begin{equation}\label{5.18}
|\cS_{d-1}\cap \{\nu:\nu_{d-2}+2|\bnu^{d-1}|=|\bell^{d-1}|+1\}|\\
=\begin{cases}
0 & \text{if }|\bell^{d-1}|\geq 2N\\
S_2 &\text{if }|\bell^{d-1}|< 2N,
\end{cases}
\end{equation}
where 
\begin{align}\label{5.19}
S_2 =\sum_{\begin{subarray}{c} \bnu_{d-3}\in\NN_0^{d-3}\\ |\bnu_{d-3}|\leq \lfloor (2N-|\bell^{d-1}|-1)/2\rfloor\end{subarray}}
 \hat{S}_2(2N-|\bell^{d-1}|-2|\bnu_{d-3}|),
\end{align}
where, for $a \in \NN_0$, 
\begin{align*}
\hat{S}_2(a) =\left\lfloor \frac{a+1}{2}\right\rfloor \left\lfloor \frac{a+2}{2}\right\rfloor.
\end{align*}

\medskip\noindent
{\it Case 3}. $\nu\in\cS_{d-1}$ with $\nu_{d-1}=\ell_{d-1}+1$ and $\nu_{d-2}+2(\nu_{d-1}+\nu_{d})= |\bell^{d-1}|+1$.
This is the overlap of Case 1 and Case 2. Clearly, the number of solution is $0$ when $|\bell^{d-1}| \geq 2N$, since
Case 2 has no solutions. If  $|\bell^{d-1}|< 2N$ and if $\bnu_{d-3}\in\NN_0^{d-3}$ is such that 
$2|\bnu_{d-3}|\leq 2N-1-|\bell^{d-1}|$, then (5.17d) with $\nu_{d-1} = \ell_{d-1}+1$ becomes 
$$
 |\bnu_{d-3}|+\ell_{d}+\ell_{d+1}-N\leq \nu_d\leq \left\lfloor \frac{\ell_d+\ell_{d+1} - \ell_{d-1}-1}{2}\right\rfloor.
$$
For fixed $\bnu_{d-3}$, there are $\lfloor (2N-|\bell^{d-1}|-2 | \bnu_{d-3}|+1)/2\rfloor$ solutions. Consequently, 
\begin{equation}\label{5.20}
|\cS_{d-1}\cap \{\nu:\nu_{d-1}=\ell_{d-1}+1,\nu_{d-2}+2|\bnu^{d-1}|=|\bell^{d-1}|+1\}|
=\begin{cases}
0 & \text{if }|\bell^{d-1}|\geq 2N\\
S_3 &\text{if }|\bell^{d-1}|< 2N,
\end{cases}
\end{equation}
where 
\begin{align}\label{5.21}
S_3 =\sum_{\begin{subarray}{c} \bnu_{d-3}\in\NN_0^{d-3}\\ |\bnu_{d-3}|\leq \lfloor (2N-|\bell^{d-1}|-1)/2\rfloor\end{subarray}}
   \hat{S}_3(2N-|\bell^{d-1}|-2|\bnu_{d-3}|), 
\end{align}
where, for $a \in \NN_0$, 
\begin{align*}
\hat{S}_3(a) = \left\lfloor \frac{a+1}{2}\right\rfloor.
\end{align*}

For $a \in \NN_0$, it is easy to see that 
$$
-\hat{S}_1(a)+\hat{S}_2(a)-\hat{S}_3(a) = - \left\lfloor \frac{a}{2}\right\rfloor\left\lfloor \frac{a+1}{2}\right\rfloor
 + \left\lfloor \frac{a+1}{2}\right\rfloor\left\lfloor \frac{a+2}{2}\right\rfloor -\left\lfloor \frac{a+1}{2}\right\rfloor  =0.
$$
Consequently, by \eqref{5.15}, \eqref{5.19} and \eqref{5.21}, 
$$
   -S_1+S_2-S_3=0 \qquad \text{ when }\qquad |\bell^{d-1}| < 2N.
$$
Thus, from equations \eqref{5.16}, \eqref{5.18} and \eqref{5.20} we deduce that
\begin{align*}
|\cS_{d-1}|&=|\cS_{d-1}\cap \{\nu:\nu_{d-1}=\ell_{d-1}+1\}|+|\cS_{d-1}\cap \{\nu:\nu_{d-2}+2|\bnu^{d-1}|=|\bell^{d-1}|+1\}|\\
&\qquad -|\cS_{d-1}\cap \{\nu:\nu_{d-1}=\ell_{d-1}+1,\nu_{d-2}+2|\bnu^{d-1}|=|\bell^{d-1}|+1\}|\\
&=\binom{N-\ell_{d-1}+d-2}{d-1},
\end{align*}
which completes the proof.
\end{proof}

\begin{proof}[Proof of Lemma~\ref{le4.3}]
From Remark~\ref{re4.4} we see that the set $H^d(N,\dots,N,\ell_{d},\ell_{d+1},N)$ consists of all $\bnu\in\NN_0^{d}$ such that
\begin{subequations}\label{5.22}
\begin{align}
\nu_{d} & \leq\min(\ell_d,\ell_{d+1}), \label{5.22a}\\
\nu_{d-1}+2\nu_{d} & \leq \ell_{d}+\ell_{d+1}, \label{5.22b}\\
\nu_{d-1}+\nu_{d} & \leq N-|\bnu_{d-2}|,\label{5.22c}
\end{align}
\end{subequations}
and we need to show that 
\begin{equation}\label{5.23}
|H^d(N,\dots,N,\ell_{d},\ell_{d+1},N)|=\binom{N+d}{d}-\binom{N-\ell_{d}+d-1}{d}-\binom{N-\ell_{d+1}+d-1}{d}.
\end{equation}
Since all equations above are symmetric in $\ell_{d}$ and $\ell_{d+1}$, we can assume that $\ell_{d+1}\leq\ell_{d}$. If we fix 
$\bnu_{d-2}\in\NN_0^{d-2}$ such that $|\bnu_{d-2}|\leq N$ and if we set $\Nt=N-|\bnu_{d-2}|$, then the system is two-dimensional 
and, depending
on the size of $|\bnu_{d-2}|$, it can have one, two, or three of its $\ell_i$ equal to $\Nt$, and the number of solutions of the system 
\eqref{5.22} for $\nu_{d-1}$ and $\nu_d$ is equal to:
\begin{itemize}
\item $|H^{2}(\Nt,\ell_d,\ell_{d+1},\Nt)|$ when $|\bnu_{d-2}|\leq N-\ell_{d}$;
\item $|H^{2}(\Nt,\Nt,\ell_{d+1},\Nt)|$ when $ N-\ell_{d}<|\bnu_{d-2}|\leq N-\ell_{d+1}$;
\item $|H^{2}(\Nt,\Nt,\Nt,\Nt)|$ when $ N-\ell_{d+1}<|\bnu_{d-2}|\leq N$.
\end{itemize}
Therefore, with $\Nt = N - |\bnu_{d-2}|$, 
\begin{align*}
&|H^d(N,\dots,N,\ell_{d},\ell_{d+1},N)|\\
&=\sum_{\begin{subarray}{c} \bnu_{d-2}\in\NN_0^{d-2}\\ |\bnu_{d-2}|\leq N-\ell_{d}\end{subarray}}
  \left(\binom{\Nt +2}{2}-\binom{\Nt -\ell_{d}+1}{2}-\binom{\Nt -\ell_{d+1}+1}{2}\right)\\
&\qquad+\sum_{\begin{subarray}{c} \bnu_{d-2}\in\NN_0^{d-2}\\ N-\ell_{d}<|\bnu_{d-2}|\leq N-\ell_{d+1}\end{subarray}}
\left(\binom{\Nt +2}{2}-\binom{\Nt -\ell_{d+1}+1}{2}\right)\\
&\qquad+\sum_{\begin{subarray}{c} \bnu_{d-2}\in\NN_0^{d-2}\\ N-\ell_{d+1}<|\bnu_{d-2}|\leq N\end{subarray}}
\binom{\Nt +2}{2}\\
&=\sum_{\begin{subarray}{c} \bnu_{d-2}\in\NN_0^{d-2}\\ |\bnu_{d-2}|\leq N\end{subarray}} 
\! \binom{\Nt +2}{2}
\!\! -\sum_{\begin{subarray}{c} \bnu_{d-2}\in\NN_0^{d-2}\\ |\bnu_{d-2}|\leq N-\ell_{d+1}\end{subarray}}
\!\!\! \binom{\Nt -\ell_{d+1}+1}{2} 
\!\! -\sum_{\begin{subarray}{c} \bnu_{d-2}\in\NN_0^{d-2}\\ |\bnu_{d-2}|\leq N-\ell_{d}\end{subarray}}
\!\!\! \binom{\Nt -\ell_{d}+1}{2},
\end{align*}
which gives \eqref{5.23}, by \eqref{5.5}, and completes the proof.
\end{proof}

\end{document}